\pgfplotsset{compat=1.18}
\definecolor{uuuuuu}{rgb}{0.27,0.27,0.27}
\definecolor{sqsqsq}{rgb}{0.1255,0.1255,0.1255}
\newtheorem{definition}{Definition} [section]
\newtheorem{theorem}[definition]{Theorem}
\newtheorem{lemma}[definition]{Lemma}
\newtheorem{proposition}[definition]{Proposition}
\newtheorem{corollary}[definition]{Corollary}
\newtheorem{conjecture}[definition]{Conjecture}
\newtheorem{claim}[definition]{Claim}
\newtheorem{problem}[definition]{Problem}
\newtheorem{fact}[definition]{Fact}
\begin{document}
\title{\bf\Large Tiling $H$ in dense graphs}
\date{\today}
\author[1]{Nannan Chen\thanks{Email: \texttt{chennannan@mail.sdu.edu.cn}}}
\author[2]{Xizhi Liu\thanks{Research supported by ERC Advanced Grant 101020255. Email: \texttt{xizhi.liu.ac@gmail.com}}}
\author[3]{Lin Sun\thanks{
Email: \texttt{sun.lin@qdu.edu.cn}}}
\author[1]{Guanghui Wang\thanks{Research supported by  the Natural Science Foundation of China (12231018). Email: \texttt{ghwang@sdu.edu.cn}}} 
\affil[1]{School of Mathematics, Shandong University, Jinan, China}
\affil[2]{Mathematics Institute and DIMAP,
            University of Warwick,
            Coventry, CV4 7AL, UK}
\affil[3]{School of Mathematics and Statistics, Qingdao University, Qingdao, China}
\maketitle
\begin{abstract}
    We determine asymptotically the two extremal constructions for the tiling problem of the $H$-shaped tree.  
    In particular, the first extremal construction is close to the complement of two cliques, in contrast to previously studied bipartite graphs, where the first extremal construction is close to the complement of a single clique.
    This result refutes one of Lang's conjectures, which seeks to generalize the Erd\H{o}s Matching Conjecture.
    
 \medskip

\textbf{Keywords:} tiling problem, average degree, trees, Erd\H{o}s Matching Conjecture
\end{abstract}
\section{Introduction}\label{SEC:Intorduction}
Fix an integer $r\ge 2$, an $r$-graph $\mathcal{G}$ is a collection of $r$-subsets of some finite set $V$. We identify a hypergraph $\mathcal{G}$ with its edge set and use $V(\mathcal{G})$ to denote its vertex set. The size of $V(\mathcal{G})$ is denoted by $v(\mathcal{G})$. 
Given a vertex $v\in V(\mathcal{G})$, 
the \textbf{degree} $d_{\mathcal{G}}(v)$ of $v$ in $\mathcal{G}$ is the number of edges in $\mathcal{G}$ containing $v$.
We use $\delta(\mathcal{G})$, $\Delta(\mathcal{G})$, and $d(\mathcal{G})$ to denote the \textbf{minimum degree}, the \textbf{maximum degree}, and the \textbf{average degree} of $\mathcal{G}$, respectively.
We will omit the subscript $\mathcal{G}$ if it is clear from the context.

Given two $r$-graphs $F$ and $\mathcal{G}$, an \textbf{$F$-tiling} in $\mathcal{G}$ is a collection of vertex-disjoint copies of $F$ in $\mathcal{G}$. The size of an $F$-tiling is the number of copies of $F$ in it. 
The \textbf{$F$-matching number} $\nu(F, \mathcal{G})$ of $\mathcal{G}$ is defined as the maximum size of an \textbf{$F$-tiling} in $\mathcal{G}$. 
This concept generalizes the well-studied matching number $\nu(\mathcal{G})$ of $\mathcal{G}$, since $\nu(\mathcal{G}) = \nu(K_r^r, \mathcal{G})$. Here,  $K_{\ell}^r$ denotes the complete $r$-graph with $\ell$ vertices. 
We say $\mathcal{G}$ is \textbf{$F$-free} if $\nu(\mathcal{G}) = 0$. 

Numerous works have been dedicated to studying the minimum degree threshold that guarantees an $n$-vertex $r$-graph $\mathcal{G}$ contains an $F$-tiling of a given size, starting from the foundational Corr\'{a}di--Hajnal Theorem~\cite{CH63} and Hajnal--Szemer\'{e}di Theorem~\cite{HS70} (see surveys~\cite{KS96,KSSS02,KO09Survery}). These questions are now quite well understood for graphs (see e.g.~\cite{KO09}) thanks to decades of effort of numerous researchers. 
In contrast, the natural problem of determining the average degree threshold that guarantees an $n$-vertex $r$-graph $\mathcal{G}$ contains an $F$-tiling of a given size remains largely unexplored. 

For convenience, given an $r$-graph $F$, an integer $n \in \mathbb{N}$, and a real number $k \in \left[0, n/v(F)\right]$, let $\mathrm{ex}(n,(k+1)F)$ denote the maximum number of edges in an $n$-vertex $r$-graph $\mathcal{G}$ with $\nu(F,\mathcal{G}) < k+1$. 
The case $k = 0$ is known as the Tur\'{a}n problem of $F$, which is a central topic in Extremal Combinatorics (see surveys~\cite{Keevash11,FS13}), originating from the foundational work  Tur\'{a}n~\cite{TU41} (and even earlier, by Mantel~\cite{Mantel07}). 

Determining (even asymptotically) the behavior of $\mathrm{ex}(n,(k+1)F)$ as $k$ ranges from $0$ to $n/v(F)$ is a very challenging problem in general, with only a few results known in this direction. 
The simplest case, $F = K_2$, was completely solved by the celebrated Erd\H{o}s--Gallai Theorem~\cite{EG59}. 
However, the extension of the Erd\H{o}s--Gallai Theorem becomes quite nontrivial even for very simple structured graphs, such as paths (see e.g.~\cite{Gor11,BK11,YZ17,CL18,YZ21}) and stars (see e.g.~\cite{LLP13}). For general trees, the asymptotic behavior of $\mathrm{ex}(n,(k+1)F)$ remains largely unknown. 

For complete graphs, Erd\H{o}s~\cite{Erdos62} determined $\mathrm{ex}\left(n, (k+1)K_3\right)$ for $k \le \frac{\sqrt{n}}{20}$. 
Moon~\cite{Moon68} improved and extended the theorem of Erd\H{o}s to general $r \ge 4$ and determined $\mathrm{ex}\left(n, (k+1)K_{r}\right)$ for $k \le \frac{2 n}{r^3-r^2+1}+o(n)$ (for $r=3$, his bound is $k \le \frac{2n}{9} + o(n)$).
Akiyama--Frankl~\cite{AF85} determined $\mathrm{ex}\left(n, (k+1)K_{r}\right)$ for the case when $k = \lfloor n/r \rfloor - 1$ (see also~\cite{BE78}).  
A complete determination of $\mathrm{ex}\left(n, (k+1)K_3\right)$ was achieved only a decade ago by Allen--B\"{o}ttcher--Hladk\'{y}--Piguet~\cite{ABHP15} for large $n$. 
An asymptotic complete determination of $\mathrm{ex}\left(n, (k+1)K_4\right)$ was obtained very recently by Hou et al.~\cite{HHLY25}. 
Extending the work of Erd\H{o}s and Moon to general $r$-graphs $F$, tight bounds for $\mathrm{ex}\left(n, (k+1)F\right)$ were recently established by Hou et al. in a series of works~\cite{HLLYZ23,HHLLYZ23a,HHLLYZ23b,HHLLYZ23c} for the case where $k$ is much smaller than $\frac{\mathrm{ex}(n,F)}{n^{r-1}}$. 

We focus on the case where $F$ is an $r$-partite $r$-graph. Given integers $s_1, \ldots, s_r \ge 1$, let $K_{s_1, \ldots, s_r}^r$ denote the complete $r$-partite $r$-graph with parts of sizes $s_1, \ldots, s_r$. 
A natural lower bound for $\mathrm{ex}(n, \beta n \cdot K_{s_1, \ldots, s_r}^r)$ is provided by the following construction. 

\textbf{Construction $G_{n,i,\beta}(s_1, \ldots, s_r)$}:
Given integers $n \ge r\ge i \ge 1$, $1 \le s_1 \le \cdots \le s_r$, and a real number $\beta \in \left(0, \frac{1}{m}\right)$, where $m \coloneqq s_1 + \cdots + s_r$, let $G_{n,i,\beta}(s_1, \ldots, s_r)$ denote the $n$-vertex $r$-graph whose vertex set $V$ has a partition $V = V_1 \cup V_2$ with $|V_1| = \lfloor \beta (s_1 + \cdots + s_i) n \rfloor - 1$, such that the edge set of $G_{n,i,\beta}(s_1, \ldots, s_r)$ satisfies 
\begin{align*}
    G_{n,i,\beta}(s_1, \ldots, s_r)
    = \left\{E \in \binom{V}{r} \colon |E\cap V_1| \ge i\right\}. 
\end{align*}

It is easy to observe that $\nu(K_{s_1, \ldots, s_r}^{r}, G_{n,i,\beta}(s_1, \ldots, s_r)) < \beta n$ for every $i \in [r]$, and thus 
\begin{align}\label{equ:trivial-lower-bound}
    \mathrm{ex}(n,\beta n \cdot K_{s_1, \ldots, s_r}^{r}) 
    \ge \max\left\{|G_{n,i,\beta}(s_1, \ldots, s_r)| \colon i \in [r]\right\}.
\end{align}
For the case $(s_1, \ldots, s_r) = (1,\ldots, 1)$, straightforward calculations show that the maximum value of $|G_{n,i,\beta}(1,\ldots, 1)|$ is achieved when either $i = 1$ or $i=r$. 
The Erd\H{o}s--Gallai Theorem~\cite{EG59} mentioned above states that the inequality in~\eqref{equ:trivial-lower-bound} holds with equality when $r=2$ and $(s_1, s_2) = (1,1)$. 
Extending the Erd\H{o}s--Gallai Theorem to $r \ge 3$ is a major open problem in Extremal Combinatorics. 
The famous Matching Conjecture by Erd\H{o}s~\cite{Erdos65} states that, for $(s_1, \ldots, s_r) = (1,\ldots, 1)$ and $\beta \in [0, 1/r]$, the inequality in~\eqref{equ:trivial-lower-bound} also holds with equality, i.e., for $n \ge r(k+1) - 1$, 
\begin{align*}
    \mathrm{ex}(n, (k+1)\cdot K_{1, \ldots, 1}^{r})
    = \max\left\{\binom{n}{r}-\binom{n-k}{r},~\binom{r(k+1)-1}{r}\right\}. 
\end{align*}
The Erd\H{o}s  Matching Conjecture for the case $r=3$ was solved by {\L}uczak--Mieczkowska~\cite{LM14} for large $n$, and by Frankl~\cite{Frankl17A} for all $n$.
However, the case $r \ge 4$ remains open in general (see e.g.~\cite{HLS12,Frankl13,LM14,Frankl17A,Frankl17B} for some recent progress on this topic).

Going one step beyond the Erd\H{o}s Matching Conjecture, Lang~\cite{Lang23} proposed the following conjecture for general $r$-partite $r$-graphs. 
\begin{conjecture}[Lang~{\cite[Conjecture~10.1]{Lang23}}]\label{CONJ:Lang}
    Let $r\ge 2$ and $1 \le s_1 \le \cdots \le s_r$ be integers. Let $m \coloneqq s_1 + \cdots + s_r$. Suppose that $F$ is a spanning subgraph of $K_{s_1, \ldots, s_r}^{r}$. Then for every real number $\beta \in \left(0, \frac{1}{m}\right)$,
    \begin{align*}
        \mathrm{ex}(n,\beta n \cdot F) 
        = \max\left\{|G_{n,i,\beta}(s_1, \ldots, s_r)| \colon i \in [r]\right\} + o(n^r).
    \end{align*}
\end{conjecture}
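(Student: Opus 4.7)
The conjecture has a lower bound and an upper bound. For the lower bound, inspect each construction $G_{n,i,\beta}(s_1,\ldots,s_r)$ separately. Any copy of $F$ in $G_{n,i,\beta}$ respects an $r$-partition $W_1\cup\cdots\cup W_r$ with $|W_j|=s_j$, and every edge of $F$ (using one vertex from each $W_j$) forces at least $i$ of its endpoints into $V_1$. A short case analysis on the edges of $F$ shows that each copy must place at least $s_1+\cdots+s_i$ vertices inside $V_1$, so disjoint copies can number at most $|V_1|/(s_1+\cdots+s_i)<\beta n$, giving $\mathrm{ex}(n,\beta n\cdot F)\ge\max_i|G_{n,i,\beta}(s_1,\ldots,s_r)|$.

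The upper bound is the substantive direction. Since $F$ is a spanning subgraph of $K^r_{s_1,\ldots,s_r}$, every embedded copy of $K^r_{s_1,\ldots,s_r}$ contains a copy of $F$ on the same vertex set, so $\nu(K^r_{s_1,\ldots,s_r},\mathcal{G})\le\nu(F,\mathcal{G})$ for every $r$-graph $\mathcal{G}$. Therefore $\mathrm{ex}(n,\beta n\cdot F)\le\mathrm{ex}(n,\beta n\cdot K^r_{s_1,\ldots,s_r})$, and it suffices to establish the upper bound with $F=K^r_{s_1,\ldots,s_r}$. This reduction places the problem inside the well-studied framework of tiling by complete $r$-partite $r$-graphs.

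The plan for the reduced problem is to apply the weak hypergraph regularity lemma to a candidate extremal $\mathcal{G}$, obtaining a partition $V(\mathcal{G})=U_1\cup\cdots\cup U_t$ and a reduced $r$-graph $R$ on $[t]$ whose edges record the dense, $\epsilon$-regular $r$-tuples. Provided $|\mathcal{G}|>\max_i|G_{n,i,\beta}|+\eta n^r$, the reduced graph $R$ exhibits a strict density excess over every reduced analogue of $G_{t,i,\beta}$. The hypergraph counting lemma then lifts any $K^r_{s_1,\ldots,s_r}$-tiling of $R$ of size $\beta t$ to a $K^r_{s_1,\ldots,s_r}$-tiling of $\mathcal{G}$ of size $(1-o(1))\beta n$, and an absorbing lemma tailored to $r$-partite tiles closes the $o(n)$-vertex leftover to produce $\beta n$ vertex-disjoint copies.

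The crux is the stability step for $R$: show that any $r$-graph on $t$ vertices with $|R|>\max_i|G_{t,i,\beta}(s_1,\ldots,s_r)|$ admits a $K^r_{s_1,\ldots,s_r}$-tiling of size $\beta t$. The natural route is a dichotomy in which either the tiling is present directly, or a small set of edges can be removed so that $R$ fits inside one of the templates $G_{t,i,\beta}$, contradicting the density hypothesis. An induction on $r$ and on $m=s_1+\cdots+s_r$, combined with a defect version of the Hajnal--Szemer\'edi theorem applied in the reduced graph, is the main engine. The hardest step is the classification of the \emph{blockade} configurations that can prevent a $K^r_{s_1,\ldots,s_r}$-tiling: one must enumerate, for each fixed $(s_1,\ldots,s_r)$, all dense configurations up to isomorphism and verify they lie close to some $G_{t,i,\beta}$, a delicate structural analysis whose precise form depends on the relative sizes of $s_1,\ldots,s_r$ and on the detailed edge structure of $F$.
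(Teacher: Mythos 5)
You are attempting to prove a statement that this paper does not prove but refutes: Conjecture~\ref{CONJ:Lang} is false as stated, and Theorem~\ref{THM:Main-H-tiling} is the counterexample. For the tree $H\subseteq K_{3,3}$ and $\beta\in(0,1/9]$ the paper shows $\mathrm{ex}(n,\beta n\cdot H)=\left(3\beta(1-3\beta)+o(1)\right)n^2$, whereas the conjectured value is $\max_i|G_{n,i,\beta}(3,3)|=\left(3\beta\left(1-\tfrac{3\beta}{2}\right)+o(1)\right)n^2$, which is strictly larger for every $\beta\in(0,2/15)$. So no argument for the conjectured equality can be correct, and yours breaks already at the lower bound.

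The fatal step is the ``short case analysis'' asserting that every copy of $F$ inside $G_{n,i,\beta}(s_1,\ldots,s_r)$ must place at least $s_1+\cdots+s_i$ vertices in $V_1$. That is true for $F=K_{s_1,\ldots,s_r}^{r}$ (if $r-i+1$ parts each had a vertex outside $V_1$, one could pick an edge meeting $V_1$ in fewer than $i$ vertices), but for a general spanning subgraph $F$ only $\tau_i(F)$ vertices are forced into $V_1$, and $\tau_i(F)$ can be much smaller than $s_1+\cdots+s_i$; this is exactly the rigidity issue isolated in Section~\ref{SEC:Remark} and Conjecture~\ref{CONJ:Lang-modified}. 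Concretely, $\tau_1(H)=2<3$: placing $u_i,v_i$ in $V_1$ and $a_i,b_i,c_i,d_i$ in $V_2$ uses only pairs meeting $V_1$, so $G_{n,1,\beta}(3,3)$ contains roughly $\tfrac{3}{2}\beta n>\beta n$ vertex-disjoint copies of $H$ and therefore does not certify the claimed lower bound (the true extremal example for small $\beta$ is the complete bipartite graph, i.e.\ the construction with the edges inside $V_1$ deleted). Your reduction $\mathrm{ex}(n,\beta n\cdot F)\le \mathrm{ex}(n,\beta n\cdot K_{s_1,\ldots,s_r}^{r})$ is a valid inequality, but it cannot yield the stated equality for non-rigid $F$, and even the reduced problem is out of reach by the sketched route: for $s_1=\cdots=s_r=1$ it is the Erd\H{o}s Matching Conjecture, open for $r\ge 4$, and weak hypergraph regularity does not supply the embedding and stability machinery your outline assumes.
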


The conjecture for the case where $F$ is a complete bipartite graphs follows from the following theorem by Grosu--Hladk{\'y}~\cite{GH12}. 

\begin{theorem}[Grosu--Hladk{\'y}~\cite{GH12}]\label{THM:GH12}
    Let $t \ge s \ge 1$ be integers. Suppose that $F$ is a spanning subgraph of $K_{s,t}$. Then, for every real number $\beta \in \left(0,\frac{1}{s+t}\right)$, 
    \begin{align*}
        \mathrm{ex}(n,\beta n \cdot F)
        \le \left(\max\left\{\frac{s\beta\left(2-s\beta\right)}{2},~\frac{(s+t)^2\beta^2}{2}\right\} + o(1)\right)n^2. 
    \end{align*}
\end{theorem}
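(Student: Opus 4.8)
The plan is to reduce the problem to the complete bipartite case $F = K_{s,t}$ and then use a weight/supersaturation argument. The key observation is that if $\mathcal{G}$ contains many vertex-disjoint copies of $K_{s,t}$ — and hence of any spanning subgraph $F$ — then an upper bound for $\mathrm{ex}(n, \beta n \cdot K_{s,t})$ automatically bounds $\mathrm{ex}(n, \beta n \cdot F)$ from below in the wrong direction; so instead one must argue that if $|\mathcal{G}|$ exceeds the claimed bound, then $\mathcal{G}$ contains $\beta n$ disjoint copies of $F$. Since $F \subseteq K_{s,t}$, it suffices to find $\beta n$ disjoint copies of $K_{s,t}$. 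So the real content is: an $n$-vertex graph with more than $\left(\max\{s\beta(2-s\beta)/2,\ (s+t)^2\beta^2/2\} + o(1)\right)n^2$ edges has a $K_{s,t}$-tiling of size $\beta n$.

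First I would set up a greedy removal process: repeatedly pull out a copy of $K_{s,t}$ (via the Kővári–Sós–Turán theorem, any graph with $\omega(n^{2-1/s})$ edges contains one, so copies are abundant as long as $\Theta(n^2)$ edges remain) and delete its $s+t$ vertices, continuing until fewer than $\beta n$ copies have been removed or the graph becomes $K_{s,t}$-free. Suppose for contradiction the process halts after removing $k < \beta n$ copies, leaving a $K_{s,t}$-free graph $\mathcal{G}'$ on $n - (s+t)k$ vertices. By Kővári–Sós–Turán, $|\mathcal{G}'| = o(n^2)$. The deleted $(s+t)k$ vertices account for at most $(s+t)k \cdot n$ edges crudely, but this bound is too weak; instead I would track the structure more carefully.

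The sharper step is to analyze which of the two terms in the max dominates and to extract the corresponding extremal structure. Write $|V_1|$-type set $A$ as the union of the "$s$-sides" of the removed copies (if the $s$-side term wins) — a set of size roughly $sk \le s\beta n$ — and bound edges by $\binom{|A|}{2}$ plus $|A|(n-|A|)$ plus $o(n^2)$, which gives $|A|(2n - |A|)/2 + o(n^2) = s\beta n (2n - s\beta n)/2 + o(n^2)$, matching the first term. Alternatively, using the full $(s+t)$-vertex sets gives a set of size $(s+t)k \le (s+t)\beta n$ and the bound $\binom{(s+t)k}{2} + o(n^2)$, matching the second term. The point is that every edge of $\mathcal{G}$ not inside $\mathcal{G}'$ must touch the removed vertices, and one optimizes over how to charge these edges; taking the better of the two accounting schemes yields the stated maximum. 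A stability/convexity argument ensures that mixed configurations do no better than these two extremes.

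The main obstacle is the second accounting step: showing rigorously that a graph with a maximal $K_{s,t}$-tiling of size exactly $k$ has at most $\max\{sk(2n-sk)/2,\ \binom{(s+t)k}{2}\} + o(n^2)$ edges. The difficulty is that the removed copies can overlap in complicated ways with the rest of the graph, and a vertex in a removed copy might have high or low degree; one needs to argue that either most removed vertices can be "absorbed" into a small clique-like set of size $\approx s\beta n$ (forcing the first term), or the removed copies are so densely interconnected that they behave like a clique of size $\approx (s+t)\beta n$ (forcing the second term). Handling this dichotomy cleanly — likely via a weighting that assigns each edge to the removed copy it most "belongs to," followed by a discrete optimization over the $k$ copies — is where the real work lies, and it is presumably why the $o(1)n^2$ error term (rather than an exact bound) appears in the statement.
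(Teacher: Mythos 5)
There is no proof of this theorem in the paper; it is cited from [GH12]. The paper does, however, tell you that the proof of the main result builds on the Grosu--Hladk\'y framework, which uses the regularity lemma, the blowup lemma, and the slicing lemma to reduce the problem to finding a large $F$-tiling in the blowup of a dense \emph{reduced} graph. Your sketch is a fundamentally different (elementary, greedy) strategy, and it has a real gap.

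Concretely: after greedily removing a maximal collection of $k < \beta n$ disjoint copies of $K_{s,t}$, the residual graph $\mathcal{G}'$ is $K_{s,t}$-free and thus has $o(n^2)$ edges by K\H{o}v\'ari--S\'os--Tur\'an, as you say. But every other edge meets the removed set $R$ with $|R| = (s+t)k$, and the honest bound this gives is
\begin{align*}
|\mathcal{G}| \le \binom{|R|}{2} + |R|\,(n - |R|) + o(n^2) = \frac{|R|(2n-|R|)}{2} + o(n^2) \le \frac{(s+t)\beta(2-(s+t)\beta)}{2}\,n^2 + o(n^2),
\end{align*}
which is strictly larger than both terms in the stated maximum (for small $\beta$ it is roughly $(s+t)\beta n^2$ versus the desired $s\beta n^2$). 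Your first ``accounting scheme'' would close the factor of $(s+t)/s$ gap only if the $\approx sk$ $s$-side vertices of the removed copies formed a vertex cover of $\mathcal{G}\setminus\mathcal{G}'$, but nothing forces the $tk$ $t$-side vertices to have low degree, so this does not follow. Your second scheme is simply wrong as stated: $\binom{(s+t)k}{2}$ omits the $|R|(n-|R|)$ edges from $R$ to the rest of the graph, which are a priori $\Theta(n^2)$-many. The claimed dichotomy (``absorb into a small clique-like set'' versus ``removed copies behave like a clique'') is exactly the structural statement one would need to prove, and you give no mechanism for it; I do not believe it can be established by a local charging argument, because the relevant optimization is over how the residual graph interacts with $R$ globally.

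This is precisely why the actual proof goes through regularity: one applies the regularity lemma, passes to the reduced graph $R$ (which inherits the edge-density hypothesis by \eqref{equ:reduced-graph-lower-bound}), and uses Lemmas~\ref{LEMMA:blowup-refine-partition} and~\ref{LEMMA:blowup} to convert an $F$-tiling of a blowup $R[M]$ into an $F$-tiling of $G$. The hard combinatorial work is then finding a large $\{K_2, \dots\}$-type tiling in a \emph{dense} reduced graph, which is a cleaner problem than controlling arbitrary edges incident to a maximal tiling. Your greedy idea is a reasonable first instinct but cannot be pushed through without an additional structural/stability ingredient that your sketch does not supply.
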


Results by Hou et al.~\cite{HHLLYZ23a,HHLLYZ23b,HHLLYZ23c} show that Conjecture~\ref{CONJ:Lang} holds for certain class (see Section~\ref{SEC:Remark}) of $r$-partite $r$-graphs $F$ when $k$ lies in intervals 
\begin{align*}
    \left[0,~\frac{\varepsilon \cdot \mathrm{ex}(n,F)}{n^{r-1}}\right],\quad 
    \left[\frac{\varepsilon \cdot \mathrm{ex}(n,F)}{n^{r-1}},~\varepsilon n\right], \quad\text{and}\quad 
    \left[\frac{n}{v(F)} - \varepsilon n,~\frac{n}{v(F)}\right], 
\end{align*}
where $\varepsilon>0$ is a constant depending only on $F$. 
A conjecture similar to Conjecture~\ref{CONJ:Lang} was proposed earlier by Gan et al.~\cite{GHSW21} for the $r$-partite $r$-graph  $Y_{r,b}$, where $Y_{r,b}$ is the $r$-graph consisting of two edges that intersect in exactly $b < r$ vertices.
The case $(r,b) = (3,2)$ was recently confirmed by Han--Sun--Wang~\cite{HSW23}.  

Our main result (Theorem~\ref{THM:Main-H-tiling}) is an asymptotic determination of  $\mathrm{ex}(n,\beta n \cdot H)$, where $H$ denotes the $H$-shaped tree defined below. In particular, our result shows that the extremal construction for $\beta$ in the interval $\left[0, \frac{1}{9}\right]$ differs significantly from the constructions provided in Conjecture~\ref{CONJ:Lang}, thereby disproving this conjecture. 

\begin{figure}[H]
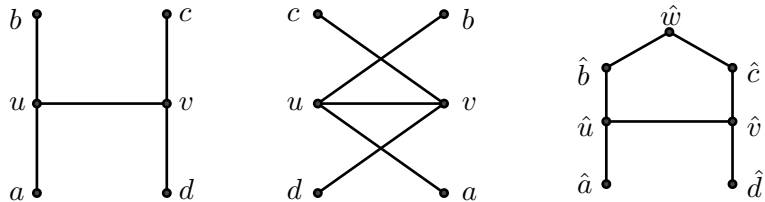

\centering
\tikzset{every picture/.style={line width=1pt}} 

\caption{$H$ and $\hat{H}$.} 
\label{Fig:H-def}
\end{figure}

Let $H$ denote the graph with vertex set $\{u,v,a,b,c,d\}$ and edge set (see Figure~\ref{Fig:H-def})
\begin{align*}
    \{uv, ua, ub, vc, vd\}. 
\end{align*}
Let $\hat{H}$ denote the graph with vertex set $\{\hat{u},\hat{v},\hat{w},\hat{a},\hat{b},\hat{c},\hat{d}\}$ and edge set (see Figure~\ref{Fig:H-def})
\begin{align*}
    \{\hat{u}\hat{v}, \hat{u}\hat{a}, \hat{u}\hat{b},\hat{v}\hat{c},\hat{v}\hat{d},\hat{w}\hat{b}, \hat{w}\hat{c}\}. 
\end{align*}

\begin{figure}[H]
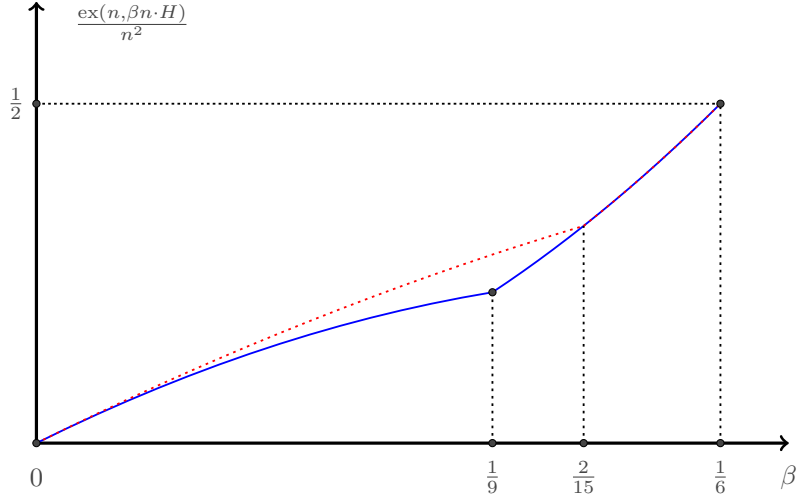

\centering

\caption{The asymptotic behavior of $\frac{\mathrm{ex}(n,\beta n \cdot H)}{n^2}$ as a function of $\beta$. The red dotted curve represents the conjectured value.}
\label{Fig:functions}
\end{figure}

\begin{theorem}\label{THM:Main-H-tiling}
    Let $n \ge 0$ be an integer and $\beta \in \left[0, \frac{1}{6}\right]$ be a real number. 
    Then 
    \begin{align*}
        \mathrm{ex}(n,\beta n \cdot H) 
        =  
        \left(\Xi(\beta) + o(1) \right) n^2,
    \end{align*}
    where 
    \begin{align*}
        \Xi(\beta)
        \coloneqq 
        \begin{cases}
            3\beta(1-3\beta), &\quad\text{if}\quad \beta \in \left[0, \frac{1}{9}\right], \\[0.5em]
            18\beta^2, &\quad\text{if}\quad \beta \in \left[\frac{1}{9}, \frac{1}{6}\right]. 
        \end{cases}
    \end{align*}
\end{theorem}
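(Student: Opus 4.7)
For the \textbf{lower bound}, I use two explicit constructions. For $\beta \in [0, 1/9]$, take $G_1 \coloneqq K_{a, n-a}$ with $a = \lfloor 3\beta n\rfloor - 1$: since $H$ is bipartite with both colour classes of size $3$, every copy of $H$ uses at least three vertices from each side, so $\nu(H, G_1) \le \lfloor a/3\rfloor < \beta n$, and $|G_1| = a(n-a) = \left(3\beta(1-3\beta) + o(1)\right) n^2$. For $\beta \in [1/9, 1/6]$, take $G_2$ to be $K_m$ together with $n-m$ isolated vertices, with $m = \lfloor 6\beta n\rfloor - 1$: since $v(H) = 6$, $\nu(H, G_2) \le \lfloor m/6\rfloor < \beta n$, and $|G_2| = \binom{m}{2} = \left(18\beta^2 + o(1)\right) n^2$.

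The \textbf{easy part of the upper bound} is the regime $\beta \in [2/15, 1/6]$. Here I exploit $H \subseteq K_{3,3}$, so $\nu(K_{3,3}, G) \le \nu(H, G) < \beta n$, and Theorem~\ref{THM:GH12} with $s = t = 3$ yields
\begin{equation*}
    |G| \le \left(\max\!\left\{\tfrac{3\beta(2 - 3\beta)}{2},\; 18\beta^2\right\} + o(1)\right) n^2 = \left(18\beta^2 + o(1)\right) n^2,
\end{equation*}
since $\tfrac{3\beta(2-3\beta)}{2} \le 18\beta^2$ precisely when $\beta \ge 2/15$.

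For the \textbf{hard part of the upper bound}, namely $\beta \in [0, 2/15)$, Theorem~\ref{THM:GH12} is strictly too weak and the plan is stability plus supersaturation. The first step is a stability companion to Theorem~\ref{THM:GH12}: any $G$ with $\nu(K_{3,3}, G) < \beta n$ and $|G| \ge \left(\tfrac{3\beta(2-3\beta)}{2} - o(1)\right) n^2$ is $o(n^2)$-close, in edit distance, to the split graph $G_{n, 1, \beta}(3,3)$ (or, in the appropriate sub-range, to the clique $G_{n, 2, \beta}(3,3)$). I expect this to come from a Szemer\'{e}di-regularity analysis of the reduced graph combined with a blow-up argument. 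The second step is the key observation that $G_{n, 1, \beta}(3,3)$ itself hosts $\beta n$ vertex-disjoint copies of $H$: for any $u, v \in V_1$ and any four distinct $a, b, c, d \in V_2$, all five edges of $H$ are present, so $H$ can be packed using only two $V_1$-vertices per copy, and a simple LP over the three possible placement types of $H$ across the $V_1$--$V_2$ split yields $\nu(H, G_{n, 1, \beta}(3,3)) \ge \beta n$ throughout $\beta \le 1/6$. Combining the two steps, a $G$ with $\nu(H, G) < \beta n$ cannot lie near $G_{n, 1, \beta}(3,3)$, forcing $|G|$ strictly below the GH12 bound by the missing $\tfrac{9\beta^2}{2} n^2$ of clique edges and landing at $3\beta(1 - 3\beta) n^2$ for $\beta \le 1/9$ and at $18\beta^2 n^2$ for $\beta \in [1/9, 2/15]$, matching $\Xi(\beta)$.

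The \textbf{main obstacle} is making the supersaturation step quantitatively tight. A naive robustness argument against $o(n^2)$ edge perturbations only produces $(1 - o(1)) \beta n$ vertex-disjoint $H$-copies, losing an additive $o(n)$ term which is fatal for the tight asymptotic. To close this gap I would build an absorption gadget out of the auxiliary graph $\hat{H}$ from Figure~\ref{Fig:H-def}: since $\hat{H}$ contains $H$ (delete $\hat{w}$) but also has the extra vertex $\hat{w}$ adjacent to $\hat{b}$ and $\hat{c}$, swapping a few $H$-tiles for $\hat{H}$-tiles and back lets one reroute the last $o(n)$ leftover vertices into the tiling, recovering the full packing of $\beta n$ copies and yielding the desired contradiction.
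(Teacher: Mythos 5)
Your lower bounds are correct and match the paper's (the complete bipartite construction for $\beta \le 1/9$ and the clique for $\beta \ge 1/9$), and the remark that the Grosu--Hladk\'y bound settles the range $\beta \in [2/15, 1/6]$ is also correct. You have also correctly identified the key structural fact behind the whole theorem: that $G_{n,1,\beta}(3,3)$ (the complement of a large independent set) already contains $\beta n$ disjoint copies of $H$ because $u, v$ can both be placed in $V_1$; this is precisely why Lang's Conjecture~\ref{CONJ:Lang} fails for $H$.

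However, your upper bound argument for $\beta \in (0, 2/15)$ does not work as stated. A stability companion to Theorem~\ref{THM:GH12} would say: if $\nu(K_{3,3},G) < \beta n$ and $|G| \ge \bigl(\tfrac{3\beta(2-3\beta)}{2} - o(1)\bigr)n^2$, then $G$ is $o(n^2)$-close to $G_{n,1,\beta}(3,3)$. This hypothesis requires $|G|$ to be near the Grosu--Hladk\'y threshold, which is strictly \emph{larger} than $\Xi(\beta)n^2$ by a gap of $\tfrac{9\beta^2}{2}n^2$. You need to rule out graphs $G$ with $\nu(H,G) < \beta n$ and $|G|$ anywhere in the window $\bigl(\Xi(\beta) n^2, \tfrac{3\beta(2-3\beta)}{2}n^2\bigr)$, but those graphs are not covered by the stability statement --- they need not resemble $G_{n,1,\beta}(3,3)$ at all, so your second step (an $H$-tiling exists near $G_{n,1,\beta}(3,3)$) gives no contradiction. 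The sentence ``forcing $|G|$ strictly below the GH12 bound by the missing $\frac{9\beta^2}{2}n^2$ of clique edges'' has no justification: stability arguments typically only push the density down by some $\varepsilon n^2$, not by the full gap to the next construction, and certainly not by the specific amount $\tfrac{9\beta^2}{2}n^2$. The absorption sketch via $\hat{H}$ addresses only the $o(n)$ error in packing, which is a secondary issue and does not touch this gap.

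The paper instead avoids stability altogether. It runs an amplification argument in blowups (an adaptation of Koml\'os' and Grosu--Hladk\'y's framework): the Regularity and Blow-up Lemmas reduce the problem to showing that a blowup of a dense reduced graph $R$ contains an $H$-tiling covering a $6\beta$ fraction; Corollary~\ref{CORO:blowup-H-tiling} iterates this by showing that each $6$-fold blowup increases the $H$-covering fraction by a fixed $\delta$. The engine of that iteration is Proposition~\ref{PROP:H-K2-H-hat-tiling}, which shows directly (via the extensive case analysis of Section~\ref{SEC:proof-LEMMA-Hi-Hj-U-bar-edges}, in the spirit of Allen--B\"ottcher--Hladk\'y--Piguet's treatment of triangle tilings) that any dense graph with small $\nu(H,G)$ has a $\{K_2,H,\hat{H}\}$-tiling covering $6\nu(H,G) + \delta n$ vertices. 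Here $\hat{H}$ is chosen not as an absorber but because $\hat{H}[6]$ (and $K_2[6]$) admit perfect $H$-tilings (Proposition~\ref{PROP:K2-H-hat-decomposition}), so such a tiling lifts to a larger $H$-tiling in the blowup. This combinatorial step is where all the work lives, and your proposal contains no substitute for it.
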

\textbf{Remarks.} 
\begin{itemize}
    \item Since $H \subseteq K_{3,3}$, it follows from the theorem of Grosu--Hladk\'y (Theorem~\ref{THM:GH12}) that $\mathrm{ex}(n,\beta n \cdot H) \le (18\beta^2+o(1))n^2$ for $\beta \in \left[\frac{2}{15}, \frac{1}{6}\right]$ (see Figure~\ref{Fig:functions}).
    \item The lower bound for $\mathrm{ex}(n,\beta n\cdot H)$ when $\beta \in \left[\frac{1}{9}, \frac{1}{6}\right]$ follows from~\eqref{equ:trivial-lower-bound}. The lower bound for the case $\beta \in \left[0, \frac{1}{9}\right]$ is provided by the complete bipartite graph with parts of sizes $\left\lfloor3\beta n\right\rfloor - 1$ and $n - \left\lfloor3\beta n\right\rfloor +1$. 
\end{itemize}

In Section~\ref{SEC:prelim}, we present some definitions and preliminary results. 
In Section~\ref{SEC:Proof-H-tiling}, we present the proof of Theorem~\ref{THM:Main-H-tiling}.
The proofs of technical results are deferred to Sections~\ref{SEC:Proof-K2-H-hatH-tiling} and~\ref{SEC:proof-LEMMA-Hi-Hj-U-bar-edges}. 
In Section~\ref{SEC:Remark}, we include some remarks and open problems.

\section{Preliminaries}\label{SEC:prelim}
For a graph $F$, the \textbf{$F$-covering ratio} $\tau(F, G)$ of a graph $G$ is defined as 
\begin{align*}
    \tau(F, G)
    \coloneqq \frac{v(F) \cdot \nu(F,G)}{v(G)}. 
\end{align*}
In other words, $\tau(F, G)$ is the proportion of vertices that are covered by a maximum $F$-tiling in $G$.

Given a family $\mathcal{F}$ of graphs, an $\mathcal{F}$-tiling in $G$ is a collection of vertex-disjoint copies of graphs from $\mathcal{F}$ within $G$.

Let $G$ be a graph. 
For every vertex set $S \subseteq V(G)$, let $G[S]$ denote the \textbf{induced subgraph} of $G$ on $S$. The number of edges in $G[S]$ is denoted by $e_{G}(S)$ for simplicity. 
We use $G - S$ to denote the induced subgraph of $G$ on $V(G) \setminus S$. 
For every vertex $v \in V(G)$, we set $N_{G}(v, S) \coloneqq N_{G}(v) \cap S$ and $d_{G}(v, S) \coloneqq |N_{G}(v,S)|$. 
Given another vertex $T \subseteq V(G)$ that is disjoint from $S$, let $G[S,T]$ denote the \textbf{induced bipartite subgraph} of $G$ on $S$ and $T$, i.e. $G[S,T]$ consists of all edges in $G$ that have nonempty intersection with both $S$ and $T$. The number of edges in $G[S,T]$ is denoted by $e_{G}(S,T)$ for simplicity. 
We will omit the subscript $G$ if it is clear from the context. 

Recall that $H$ is a tree with five edges. 
Thus, we have the following upper bound for $\mathrm{ex}(n, H)$, which can be easily derived using a greedy argument. 
\begin{fact}\label{FACT:H-free-Turan}
    For every integer $n \ge 0$, we have $\mathrm{ex}(n,H) < 5n$.     
\end{fact}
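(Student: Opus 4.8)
The plan is to use the standard greedy/extremal-vertex argument that bounds the Tur\'an number of any forest in terms of its number of edges. Recall that $H$ is a tree on six vertices with exactly five edges. I claim more generally that if $T$ is a tree with $t$ edges, then any graph $G$ with $\delta(G) \ge t$ contains a copy of $T$; applying this to $T = H$ with $t = 5$ will give the bound. To prove the general claim, I would argue by induction on $t$ embedding $T$ vertex by vertex: order the vertices of $T$ as $x_1, \ldots, x_{t+1}$ so that each $x_{j}$ with $j \ge 2$ has exactly one neighbour among $x_1, \ldots, x_{j-1}$ (a ``tree order'', obtained e.g.\ by a BFS/DFS from an arbitrary root). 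Embed $x_1$ arbitrarily; having embedded $x_1, \ldots, x_{j-1}$ into distinct vertices of $G$, the vertex $x_j$ has a unique already-embedded neighbour $x_i$, whose image has degree at least $t \ge j-1$ in $G$, so it has a neighbour outside the at most $j-1$ already-used vertices; map $x_j$ there. This completes the embedding of $T$.

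Next I would convert the minimum-degree statement into an edge bound by the usual peeling argument. Suppose for contradiction that $G$ is an $n$-vertex graph with $|E(G)| \ge 5n$ and $\nu(H,G)=0$, i.e.\ $G$ is $H$-free. Repeatedly delete a vertex of degree at most $4$ (i.e.\ degree $< 5$). Since $H$ is connected on six vertices, a graph on fewer than $6$ vertices is trivially $H$-free, so this process can in principle run until few vertices remain; but each deletion removes at most $4$ edges, so after deleting all $n$ vertices we would have removed at most $4n < 5n \le |E(G)|$ edges, a contradiction once we note that the process must terminate with the empty graph having removed all $|E(G)|$ edges. Hence at some stage the surviving graph $G'$ has $\delta(G') \ge 5$ and is still $H$-free (as an induced subgraph of an $H$-free graph). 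By the claim of the previous paragraph, $G'$ contains a copy of $H$, contradicting $H$-freeness. Therefore $|E(G)| < 5n$, which is exactly $\mathrm{ex}(n,H) < 5n$. The case $n = 0$ is vacuous since $\binom{0}{2} = 0 < 0$ fails—so one should note $\mathrm{ex}(0,H)=0$ and the strict inequality $0 < 0$ is false; more carefully, for $n \le 5$ the bound $\mathrm{ex}(n,H) \le \binom{n}{2} < 5n$ holds directly, so we may assume $n \ge 6$ in the argument above.

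There is essentially no serious obstacle here: both ingredients (greedy embedding of a tree into a graph of large minimum degree, and the degeneracy-peeling argument) are completely routine, and the constant $5$ is simply $e(H)$. The only mild care needed is the handling of small $n$ and the exact strictness of the inequality, which is why the statement is phrased with a strict ``$<$''. I would present the two paragraphs above essentially verbatim, perhaps compressing the tree-embedding induction to one sentence since it is folklore.
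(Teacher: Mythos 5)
Your proof is correct and is precisely the standard fleshing-out of the ``greedy argument'' the paper alludes to: a tree with $t$ edges embeds greedily into any graph of minimum degree at least $t$, combined with the degeneracy-peeling step that converts this into the edge bound. Your remark that the $n=0$ case literally fails ($0<0$) is valid pedantry, though the separate treatment of $n\le 5$ is unnecessary since the peeling argument already handles every $n\ge 1$: if a nonempty subgraph of minimum degree at least $5$ survived it would have at least six vertices and thus a copy of $H$.
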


We will use the following results on rainbow matchings, which follow from a simple inductive argument on $t$.
\begin{proposition}\label{PROP:matching-mixed-hypergraphs}
    Let $n, r, t \ge 1$ be integers and $V$ be a set of size $n$. 
    Suppose that for each $i \in [t]$, $\mathcal{G}_i$ is an $r_i$-uniform hypergraph on $V$ with $|\mathcal{G}_i| \ge r t n^{r_i-1}$, where $r_i$ is a positive integer satisfying $r_i \le r$. Then there exists $e_i \in \mathcal{G}_i$ for every $i \in [t]$ such that $e_1, \ldots, e_{t}$ are pairwise disjoint. 
\end{proposition}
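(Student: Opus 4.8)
The plan is to prove Proposition~\ref{PROP:matching-mixed-hypergraphs} by induction on $t$, the number of hypergraphs. The base case $t = 1$ is immediate: the hypothesis $|\mathcal{G}_1| \ge r \cdot 1 \cdot n^{r_1 - 1} \ge 1$ (since $n \ge r_1$ is forced by $\mathcal{G}_1$ being nonempty and uniform, or simply since a nonempty hypergraph has an edge) guarantees that $\mathcal{G}_1$ has at least one edge, which we take as $e_1$.

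For the inductive step, suppose the statement holds for $t - 1$. Given $\mathcal{G}_1, \ldots, \mathcal{G}_t$ on a common vertex set $V$ of size $n$, I would first pick an arbitrary edge $e_t \in \mathcal{G}_t$ (which exists since $|\mathcal{G}_t| \ge r t n^{r_t - 1} \ge 1$). Then I would delete the vertices of $e_t$ from every $\mathcal{G}_i$ with $i \in [t-1]$, i.e., set $\mathcal{G}_i' \coloneqq \{e \in \mathcal{G}_i : e \cap e_t = \emptyset\}$. The key estimate is a counting bound: the number of edges of $\mathcal{G}_i$ that meet $e_t$ is at most $|e_t| \cdot n^{r_i - 1} \le r_t \cdot n^{r_i - 1} \le r \cdot n^{r_i - 1}$, since each such edge contains one of the at most $r_t$ vertices of $e_t$ and is then determined by choosing its remaining $r_i - 1$ vertices from $V$. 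Hence
\begin{align*}
    |\mathcal{G}_i'| \ge |\mathcal{G}_i| - r \cdot n^{r_i - 1} \ge r t n^{r_i - 1} - r n^{r_i - 1} = r (t-1) n^{r_i - 1}.
\end{align*}
Since the $\mathcal{G}_i'$ for $i \in [t-1]$ are hypergraphs on the same vertex set $V$ (still of size $n$, as we work with the original ground set while only having forbidden the vertices of $e_t$ from being used) satisfying the hypothesis with $t$ replaced by $t-1$, the inductive hypothesis yields pairwise disjoint edges $e_1, \ldots, e_{t-1}$ with $e_i \in \mathcal{G}_i' \subseteq \mathcal{G}_i$. By construction each $e_i$ is also disjoint from $e_t$, so $e_1, \ldots, e_t$ are pairwise disjoint, completing the induction.

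There is essentially no hard part here; the only point requiring a little care is making sure the counting bound $r_t \cdot n^{r_i - 1} \le r \cdot n^{r_i - 1}$ is applied with the uniform bound $r \ge r_i$ for all $i$ (as given), so that the same threshold function works uniformly across all the hypergraphs after deletion, and that the ground set size stays $n$ throughout the recursion (we never shrink $V$; we only restrict which edges are admissible). One should also note the harmless slack: the hypothesis $|\mathcal{G}_i| \ge rtn^{r_i-1}$ is stronger than what is strictly needed, which is exactly what lets the bound survive $t-1$ rounds of vertex deletion, each costing at most $rn^{r_i-1}$ edges.
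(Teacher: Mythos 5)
Your proof is correct and is exactly the ``simple inductive argument on $t$'' that the paper alludes to without spelling out: pick an edge in the last hypergraph, throw away the at most $r n^{r_i-1}$ edges meeting it in each of the others (noting $\binom{n-1}{r_i-1} \le n^{r_i-1}$ and $r_t \le r$), and appeal to the inductive hypothesis on the same ground set. No discrepancy with the paper's intended argument.
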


Let $\alpha \ge 0$ be a real number. Define  
\begin{align*}
    \Delta_{\alpha}
    \coloneqq \left\{(y_0, y_1, y_2, y_3) \in \mathbb{R}^{4} \colon y_i \ge 0~\text{for}~i\in [0,3]~\text{and}~y_0 + y_1 + y_2 + y_3 \le \alpha\right\}.
\end{align*}
For every $(y_0, y_1, y_2, y_3) \in \mathbb{R}^{4}$, define
\begin{align*}
    \Psi_{\alpha}(y_0, y_1, y_2, y_3)
        &\coloneqq 18 y_0^2 + 12 y_1^2 + 12y_2^2 + 9y_3^2 + 30 y_0 y_1 + 24y_0y_2 + 24y_0y_3 \\ 
        & \quad + 24y_1y_2 + 24y_1y_3 + 21 y_2 y_3 + (y_1 + 2y_2 + 3y_3)(1-6\alpha). 
\end{align*}
Define
\begin{align*}
    \Psi_{\alpha}^{\ast}
    \coloneqq \max\left\{\Psi_{\alpha}(y_0, y_1, y_2, y_3) \colon (y_0, y_1, y_2, y_3) \in \Delta_{\alpha}\right\}.
\end{align*}
The following result, though it can be proved using methods from~{\cite[Appendix~A]{ABHP15}} and~{\cite[Section~7]{HHLY25}}, is proved here using Mathematica for simplicity.
The corresponding calculation file is available at  \url{https://xliu2022.github.io/Calculations-H-tiling.nb}.
\begin{proposition}\label{PROP:optimization-Psi}
    Suppose that $\alpha \in \left[0, \frac{1}{6}\right]$. Then 
    \begin{align*}
        \Psi_{\alpha}^{\ast} 
        = 
        \Xi(\alpha)
        = 
        \begin{cases}
            3\alpha(1-3\alpha), &\quad\text{if}\quad \alpha \in \left[0, \frac{1}{9}\right], \\[0.5em]
            18\alpha^2, &\quad\text{if}\quad \alpha \in \left[\frac{1}{9}, \frac{1}{6}\right].
        \end{cases}
    \end{align*}
\end{proposition}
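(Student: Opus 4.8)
The plan is to treat this as a constrained quadratic maximization over the simplex $\Delta_{\alpha}$ and to reduce it, case by case, to the behavior of $\Psi_{\alpha}$ on low-dimensional faces of $\Delta_{\alpha}$. First I would record the structure of $\Psi_{\alpha}$: it is a quadratic form in $(y_0, y_1, y_2, y_3)$ with a linear perturbation whose coefficient $(1 - 6\alpha)$ is nonnegative on the relevant range $\alpha \in [0, 1/6]$. Since the quadratic part has all nonnegative coefficients (indeed all the monomials $y_iy_j$ appear with positive weight), $\Psi_{\alpha}$ is monotone nondecreasing in each coordinate on the positive orthant; hence on $\Delta_{\alpha}$ the maximum is attained on the boundary face $y_0 + y_1 + y_2 + y_3 = \alpha$. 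This kills one degree of freedom and lets me substitute, say, $y_0 = \alpha - y_1 - y_2 - y_3$, turning the problem into maximizing a quadratic $\widetilde{\Psi}_{\alpha}(y_1, y_2, y_3)$ over the scaled simplex $\{y_i \ge 0,\ y_1 + y_2 + y_3 \le \alpha\}$.

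Next I would analyze $\widetilde{\Psi}_{\alpha}$. The key observation to check is the sign of its Hessian: after the substitution, I expect the quadratic part to remain such that $\widetilde{\Psi}_{\alpha}$ is convex (or at least convex along every edge direction) on the simplex, which forces the maximum to a vertex or an edge of the reduced simplex — i.e., at most two of $y_1, y_2, y_3$ nonzero, and combined with the earlier reduction at most two of the original four coordinates nonzero. I would then enumerate the resulting one-variable optimizations: the candidates are (i) all mass on $y_0$, giving $\Psi_\alpha = 18\alpha^2$; (ii) mass split between $y_0$ and $y_1$; (iii) between $y_0$ and $y_2$; (iv) between $y_0$ and $y_3$; and the analogous pairs not involving $y_0$. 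Each reduces to maximizing a concave-or-convex quadratic in one variable on $[0,\alpha]$, which is elementary: compare endpoint values against any interior critical point. I anticipate that the binding candidate producing $3\alpha(1-3\alpha)$ for small $\alpha$ comes from the pair $(y_0, y_3)$ (or the pure $y_3$ configuration scaled appropriately), reflecting the extremal bipartite construction mentioned in the paper's remarks, while $18\alpha^2$ (pure $y_0$) takes over once $\alpha \ge 1/9$; the crossover at $\alpha = 1/9$ is exactly where $3\alpha(1-3\alpha) = 18\alpha^2$.

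Finally I would verify the two inequalities that pin down $\Psi_{\alpha}^{\ast}$: the lower bound $\Psi_{\alpha}^{\ast} \ge \Xi(\alpha)$ by exhibiting the explicit maximizer in each range (a point of the form $(0,0,0,y_3)$ with $y_3$ chosen appropriately for $\alpha \le 1/9$, and $(\alpha,0,0,0)$ for $\alpha \ge 1/9$), and the upper bound $\Psi_{\alpha}^{\ast} \le \Xi(\alpha)$ by the face-reduction argument above together with the finite case check. The main obstacle I foresee is controlling the sign of the Hessian of the reduced quadratic $\widetilde{\Psi}_{\alpha}$ uniformly in $\alpha$: the linear term's coefficient $(1 - 6\alpha)$ changes the location of interior critical points as $\alpha$ varies, so the "which vertex/edge wins" analysis genuinely depends on $\alpha$, and one must track the comparison between competing quadratics across the whole interval $[0, 1/6]$ rather than at a single point. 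This is precisely the bookkeeping that the authors outsource to Mathematica; in a hand proof it would be the step demanding the most care, and I would organize it by first establishing that only the pairs listed above can be optimal, then writing each pairwise maximum as an explicit function of $\alpha$ and comparing.
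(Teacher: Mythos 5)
Your route is genuinely different from the paper's: the paper simply remarks that Proposition~\ref{PROP:optimization-Psi} can be obtained by methods from \cite{ABHP15} and \cite{HHLY25} and then discharges it to a Mathematica computation, whereas you attempt a direct hand proof by face reduction on the polytope $\Delta_\alpha$. The opening step is sound: all partial derivatives of $\Psi_\alpha$ are nonnegative on $\Delta_\alpha$ for $\alpha \le 1/6$, so the maximum sits on the face $y_0+y_1+y_2+y_3=\alpha$, and substituting $y_0 = \alpha - y_1 - y_2 - y_3$ gives a quadratic $\widetilde{\Psi}_\alpha$ on the $3$-simplex. That much is a genuine simplification, and a complete elementary proof along these lines would be a nice addition.

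The gap is the convexity claim, and it is false. Carrying out the substitution gives
\begin{align*}
\widetilde{\Psi}_\alpha(y_1,y_2,y_3) = 18\alpha^2 + 6y_2^2 + 3y_3^2 + 6y_1y_2 + 6y_1y_3 + 9y_2y_3 + (1-12\alpha)y_1 + (2-24\alpha)y_2 + (3-30\alpha)y_3,
\end{align*}
whose Hessian is the matrix with rows $(0,6,6)$, $(6,12,9)$, $(6,9,6)$. This is indefinite, and in particular the directional second derivative along $e_1 - e_3$ equals $-6 < 0$: the function is \emph{strictly concave} along the edge from $(0,\alpha,0,0)$ to $(0,0,0,\alpha)$. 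Your hedge ``or at least convex along every edge direction'' is therefore not available, and the stated reduction to ``at most two nonzero coordinates'' does not follow from what you wrote. The fix is to replace the convexity appeal by the weaker but correct observation that on every face of dimension $\ge 2$ the restricted Hessian has a positive eigenvalue (for the three coordinate faces the traces are $6,12,18$, and for the $y_0=0$ face the tangential restriction has eigenvalues $\pm 3$), so no interior maximum exists there. That pushes the maximum to the $1$-skeleton, where only one edge is genuinely concave (the $y_0=y_2=0$ edge). On that edge one parametrizes $y_1 = \alpha - t$, $y_3 = t$ and gets $6\alpha^2 + \alpha - 3t^2 + (2-12\alpha)t$, whose critical point $t^* = (1-6\alpha)/3$ lies in $[0,\alpha]$ exactly when $\alpha \ge 1/9$ with value $18\alpha^2 - 3\alpha + \tfrac13 \le 18\alpha^2$; for $\alpha \le 1/9$ the maximum is at $t=\alpha$ with value $3\alpha(1-3\alpha)$. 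Comparing with the four vertices ($18\alpha^2$, $6\alpha^2+\alpha$, $2\alpha$, $3\alpha(1-3\alpha)$) then yields $\Xi(\alpha)$ in both ranges. So your framework can be repaired into a full proof, but as stated the key structural claim is wrong and the argument has a hole where the $2$-face analysis should be.
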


We now present some standard definitions and results related to the well-known Regularity Lemma and the Blowup Lemma (see~\cite{KSS97,KSSS02} for further details). 

Let $G$ be a bipartite graph with parts $V_1$ and $V_2$. 
The \textbf{density} of $G$, denoted by $\rho(G)$, is defined as $\rho(G) \coloneqq \frac{|G|}{|V_1| |V_2|}$. 
For a real number $\varepsilon \in (0,1]$, we say $G$ is $\varepsilon$-regular if 
\begin{align*}
    |\rho(G[U_1, U_2]) - \rho(G)|
    \le \varepsilon
\end{align*}
for every pair of subsets $U_1 \subseteq V_1$ and $U_2 \subseteq V_2$ satisfying $|U_1| \ge \varepsilon |V_1|$ and $|U_2| \ge \varepsilon |V_2|$. 

Let $\varepsilon \in [0,1]$ be a real number. 
An \textbf{$\varepsilon$-regular partition} of an $n$-vertex graph $G$ is a partition $V_1 \cup \cdots \cup V_{k} = V(G)$ satisfying 
\begin{itemize}
    \item $|V_i - V_j| \le 1$ for all pairs $\{i,j\} \in \binom{[k]}{2}$, and 
    \item $G[V_i, V_j]$ is $\varepsilon$-regular for all but at most $\varepsilon k^2$ pairs $\{i,j\} \in \binom{[k]}{2}$.  
\end{itemize}
For a real number $\rho \in [0,1]$, the \textbf{$\rho$-reduced graph} $R$ of $G$ with respect to an $\varepsilon$-regular partition $V_1 \cup \cdots \cup V_{k} = V(G)$ is defined as the graph with vertex set $\{V_1, \ldots, V_k\}$, where two sets $V_i$ and $V_j$ are adjacent in $R$ iff $G[V_i, V_j]$ is $\varepsilon$-regular and $d(G[V_i, V_j]) \ge \rho$. 
It follows from the definition that 
\begin{align*}
    |R| \cdot \left\lceil \frac{n}{k} \right\rceil^2 + \left(\binom{k}{2}-|R|\right) \cdot \rho  \left\lceil \frac{n}{k} \right\rceil^2 + k \cdot \binom{\left\lceil n/k \right\rceil}{2}
    \ge |G|
\end{align*}
which implies that 
\begin{align}\label{equ:reduced-graph-lower-bound}
    |R|
    & \ge (1-o(1))\frac{k^2}{n^2}\left(|G| - k  \binom{\left\lceil n/k \right\rceil}{2} -\binom{k}{2}  \rho  \left\lceil \frac{n}{k} \right\rceil^2 \right) 
    \ge (1-o(1))\left( \frac{|G|}{n^2} - \frac{1}{2k} - \frac{\rho}{2} \right) k^2.
\end{align}

\begin{lemma}[{\cite[Theorem~2]{KSSS02}}]\label{LEMMA:regularity}
    For every $\varepsilon \in (0,1)$ and $m \in \mathbb{N}$, there exists $M_{\ref{LEMMA:regularity}} = M_{\ref{LEMMA:regularity}}(\varepsilon, m)$ such that every graph $G$ on $n \ge M_{\ref{LEMMA:regularity}}$ vertices has an $\varepsilon$-regular partition $V_1 \cup \cdots \cup V_{k} = V(G)$ for some $k \in [m,M_{\ref{LEMMA:regularity}}]$. 
\end{lemma}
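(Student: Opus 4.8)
The plan is to prove Lemma~\ref{LEMMA:regularity} by the classical energy-increment argument of Szemer\'edi. For a partition $\mathcal{P} = \{W_1, \dots, W_p\}$ of $V(G)$ into nonempty parts, define its \textbf{index}
\[
    q(\mathcal{P}) \coloneqq \sum_{1 \le i < j \le p} \frac{|W_i|\,|W_j|}{n^2}\, \rho\!\left(G[W_i,W_j]\right)^{2},
\]
where $\rho(G[W_i,W_j]) \coloneqq 0$ if $|W_i| \le 1$ or $|W_j| \le 1$; clearly $0 \le q(\mathcal{P}) \le 1/2$ for every $\mathcal{P}$. The first ingredient is the elementary fact that refining a partition cannot decrease the index: if $\mathcal{P}'$ refines $\mathcal{P}$, then $q(\mathcal{P}') \ge q(\mathcal{P})$, which follows by applying the Cauchy--Schwarz inequality pair-of-cells by pair-of-cells to the densities of the sub-pairs into which each $G[W_i,W_j]$ is split.

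The key step is the defect increment for a single irregular pair: if $(W_i,W_j)$ is \emph{not} $\varepsilon$-regular, witnessed by $U_1 \subseteq W_i$, $U_2 \subseteq W_j$ with $|U_1| \ge \varepsilon|W_i|$, $|U_2| \ge \varepsilon|W_j|$ and $|\rho(G[U_1,U_2]) - \rho(G[W_i,W_j])| > \varepsilon$, then splitting $W_i$ into $\{U_1, W_i \setminus U_1\}$ and $W_j$ into $\{U_2, W_j \setminus U_2\}$ increases the contribution of this pair to the index by more than $\varepsilon^{4}\,\frac{|W_i|\,|W_j|}{n^2}$. Given a partition $\mathcal{P}$ with $p$ parts that is not an $\varepsilon$-regular partition, i.e. has more than $\varepsilon p^2$ irregular pairs, I pick for each irregular pair its two witnessing subsets and let $\mathcal{P}'$ be the common refinement of all these $2$-part splits, so each $W_i$ is cut into at most $2^{p}$ cells. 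Summing the pairwise increments over the more-than-$\varepsilon p^2$ irregular pairs, and using that the current partition is kept near-equitable throughout (so $|W_i|\,|W_j| \ge (1-o(1))n^2/p^2$), yields $q(\mathcal{P}') > q(\mathcal{P}) + \varepsilon^{5}/2$, a gain depending only on $\varepsilon$.

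Now I iterate. Start from an arbitrary partition $\mathcal{P}_0$ of $V(G)$ into $k_0 \coloneqq \max\{m, \lceil 1/\varepsilon \rceil\}$ parts whose sizes differ by at most $1$. While the current partition is not an $\varepsilon$-regular partition, apply the increment step to obtain a refinement with index larger by at least $\varepsilon^{5}/2$. Since $0 \le q \le 1/2$ throughout, this can recur at most $\lceil \varepsilon^{-5}\rceil$ times, so the process halts after boundedly many rounds --- and it halts precisely at an $\varepsilon$-regular partition, which has at least $k_0 \ge m$ parts. The number of parts after $j$ rounds is at most the $j$-th iterate of $p \mapsto p\cdot 2^{p}$ started at $k_0$, so setting $M_{\ref{LEMMA:regularity}}(\varepsilon,m)$ equal to this iterate after $\lceil \varepsilon^{-5}\rceil$ rounds gives a bound depending only on $\varepsilon$ and $m$ (of tower type); for $n \ge M_{\ref{LEMMA:regularity}}$ every cell encountered along the way is nonempty, so the argument is valid and $k \in [m, M_{\ref{LEMMA:regularity}}]$.

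The one remaining point --- a technical nuisance rather than a genuine obstacle --- is \emph{equitability}: the increment step produces cells of very different sizes, whereas the conclusion demands $|V_i - V_j| \le 1$. The standard remedy is to re-balance after each round: having formed the common refinement into $p'$ cells, fix a target size $t \approx n/p'$, cut each cell into blocks of size exactly $t$, collect the fewer than $p' t$ leftover vertices, and redistribute everything into a fresh partition of $V(G)$ into parts of size $t$ or $t+1$. Moving only a tiny ($\varepsilon$-dependent) fraction of the vertices perturbs each relevant density by $o(1)$ and the index by less than $\varepsilon^{5}/4$, so the net gain per round stays positive and the iteration still terminates after $O(\varepsilon^{-5})$ rounds; this bookkeeping is exactly what is carried out in~\cite{KSS97,KSSS02}, so I would cite it rather than reproduce it. The only delicate quantitative point is ensuring the density perturbation introduced by re-balancing is dominated by the guaranteed index increment, which is arranged by fixing the constants in the hierarchy $1/M_{\ref{LEMMA:regularity}} \ll 1/m,\,\varepsilon$ appropriately.
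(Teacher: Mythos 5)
The paper does not prove this lemma; it imports it verbatim as Theorem~2 of~\cite{KSSS02} (this is just the Szemer\'edi Regularity Lemma in the standard ``$k\in[m,M]$'' form). Your energy-increment argument is precisely the classical proof underlying that cited theorem, and the outline is correct: the index is bounded, refinement never decreases it, each irregular pair contributes a $\gtrsim\varepsilon^4$ defect, and near-equitability is restored by chopping and redistributing a small leftover, with the number of rounds bounded by $O(\varepsilon^{-5})$. The one quantitative caveat is the one you already flag yourself --- the target part count after re-balancing must be chosen large enough (say $p\cdot 4^{p}$ rather than $p\cdot 2^{p}$) that the leftover vertices are an exponentially small fraction of $n$, so the index loss from re-balancing is genuinely dominated by the guaranteed $\Omega(\varepsilon^{5})$ gain.
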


The following lemma, known as the Slicing Lemma, follows directly from the definition of regular partitions.
\begin{lemma}[{\cite[Lemma~3]{GH12}}]\label{LEMMA:blowup-refine-partition}
    Let $\varepsilon, \rho \in (0,1)$ be real numbers and $k \ge 2$ be an integer. 
    Suppose that $R$ is the $\rho$-reduced graph of $G$ with respect to some $\varepsilon$-regular partition. 
    Then the blowup $R[k]$ is a subgraph of the $(\rho-\varepsilon)$-reduced graph of $G$ with respect to some $k \varepsilon$-regular partition. 
\end{lemma}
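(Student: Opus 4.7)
The plan is to produce the desired $k\varepsilon$-regular partition by refining the given $\varepsilon$-regular partition $V_1 \cup \cdots \cup V_m = V(G)$: each part $V_i$ is split into $k$ sub-parts $V_i^{(1)}, \ldots, V_i^{(k)}$ of sizes in $\{\lfloor|V_i|/k\rfloor, \lceil|V_i|/k\rceil\}$, producing a partition into $km$ pieces whose sizes differ by at most one. I then need to verify (i) that this refinement is $k\varepsilon$-regular in the sense of the paper, and (ii) that for every edge $V_iV_j$ of $R$ and every $s,t \in [k]$, the pair $(V_i^{(s)}, V_j^{(t)})$ is $k\varepsilon$-regular in $G$ with density at least $\rho - \varepsilon$. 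Once both are established, the $k^2$ sub-pairs joining the copies of $V_i$ to the copies of $V_j$ all lie in the $(\rho-\varepsilon)$-reduced graph for every edge of $R$, which is precisely what it means for $R[k]$ to be a subgraph of this reduced graph.

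The main ingredient for (ii) is the standard pair-wise slicing statement: if $(A,B)$ is $\varepsilon$-regular with density $d$ and $A' \subseteq A$, $B' \subseteq B$ satisfy $|A'| \ge |A|/k$ and $|B'| \ge |B|/k$, then $(A',B')$ is $k\varepsilon$-regular with density in $[d-\varepsilon, d+\varepsilon]$. Applying this with $A = V_i$, $B = V_j$, $A' = V_i^{(s)}$, $B' = V_j^{(t)}$ immediately handles every cross pair of sub-parts coming from a pair $(V_i,V_j)$ that was $\varepsilon$-regular in the original partition. In particular, every edge of $R$ corresponds to such a pair with density at least $\rho$, so the resulting sub-pair has density at least $\rho-\varepsilon$ and is $k\varepsilon$-regular, establishing (ii).

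For (i), I bound the sub-pairs that might fail to be $k\varepsilon$-regular: those arising from an irregular original pair contribute at most $k^2 \cdot \varepsilon m^2$, while within-$V_i$ sub-pairs $(V_i^{(s)}, V_i^{(t)})$ contribute at most $m\binom{k}{2}$; regular cross pairs contribute nothing by the previous step. Comparing the sum with the allowed bound $k\varepsilon \cdot (km)^2 = k^3 \varepsilon m^2$ reduces, after a short simplification, to a lower bound of order $1/(k\varepsilon)$ on $m$, which may be assumed by invoking the Regularity Lemma (Lemma~\ref{LEMMA:regularity}) with a sufficiently large floor on the number of parts.

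The main step requiring care is the pair-wise slicing statement, whose quantitative form drives both parameter losses: from $\varepsilon$ to $k\varepsilon$ because we restrict to sub-sets of relative size $1/k$, and from $\rho$ to $\rho-\varepsilon$ because densities of regular sub-pairs agree with the parent density up to an additive $\varepsilon$. A secondary subtlety is that the within-part sub-pairs $(V_i^{(s)}, V_i^{(t)})$ are genuinely new and could in principle all fail regularity, which is precisely why (i) depends on $m$ being large enough; beyond these two points the argument is routine bookkeeping on sub-part sizes and irregular-pair counts.
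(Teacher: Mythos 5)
The paper cites this lemma from [GH12] without an internal proof, so there is no in-paper argument to compare your proposal against; it must be judged on its own. Your plan --- refine each part of the given $\varepsilon$-regular partition into $k$ nearly equal pieces and apply the pair-wise slicing lemma to every cross pair --- is the standard route, and your quantitative claims in step (ii) (with $\alpha = 1/k > \varepsilon$ yielding $\max\{k\varepsilon, 2\varepsilon\} = k\varepsilon$-regularity, and the density of a sub-pair staying within $\varepsilon$ of its parent's) are correct modulo the routine floor/ceiling accounting, which is harmless for $n$ large.

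The genuine issue is in step (i), and you get the arithmetic right but misplace the fix. The irregular-pair budget $k\varepsilon\,(km)^2$ must absorb up to $m\binom{k}{2}$ within-part sub-pairs, which, as you compute, forces $m \gtrsim 1/(k\varepsilon)$ --- but this constraint is not among the lemma's hypotheses, and your fix (``invoke Lemma~\ref{LEMMA:regularity} with a sufficiently large floor on the number of parts'') cannot be used here: the $\varepsilon$-regular partition $V_1 \cup \cdots \cup V_m$ and its reduced graph $R$ are \emph{given} in the hypothesis, and re-running the regularity lemma would produce a different partition with a different reduced graph, not the $R$ you are asked to blow up. Concretely, with $m=5$, $k=2$, $\varepsilon = 0.01$ every original pair is $\varepsilon$-regular (since $\varepsilon m^2 = 0.25 < 1$), yet the $5$ within-part sub-pairs could all be irregular while the budget $k\varepsilon(km)^2 = 2$ allows at most $2$, so the balanced refinement need not be $k\varepsilon$-regular. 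The clean resolution is either to carry an explicit hypothesis of the form $m \ge 1/(2k\varepsilon)$ into the statement (which is harmless in the present paper, since every partition fed into this lemma arises from Lemma~\ref{LEMMA:regularity} with $m \gg 1/\varepsilon$), or to check whether [GH12, Lemma~3] is in fact stated with such a side condition; your write-up should not let ``we may choose $m$ large'' stand as a step inside the proof of a lemma whose input partition is fixed.
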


Let $R$ be a graph, $k \ge 1$ be an integer, and $\varepsilon, \rho \ge 0$ be real numbers. 
\begin{itemize}
    \item The \textbf{$k$-blowup} of $R$, denoted by $R[k]$, is the graph obtained by replacing each vertex of $R$ with a set of size $k$, and each edge of $R$ with a complete bipartite graph between the corresponding sets. 
    \item A \textbf{$(k, \rho, \varepsilon)$-blowup} of $R$, denoted by $R[k,\rho, \varepsilon]$, is the graph obtained by replacing each vertex of $R$ with a set of size $k$, and each edge of $R$ with an $\varepsilon$-regular bipartite graph with density at least $\rho$ between the corresponding sets. 
\end{itemize}


The following result is a variation of the well-known Blowup Lemma.
\begin{lemma}[{\cite[Lemma~6]{GH12}}]\label{LEMMA:blowup}
    Let $R$ and $F$ be two graphs. 
    For every $\beta, \rho \in (0,1)$, there exists $\varepsilon_{\ref{LEMMA:blowup}} = \varepsilon_{\ref{LEMMA:blowup}}(F, \beta, \rho) > 0$ such that the following holds for every integer $k \ge 1$.
    Suppose that $R[k]$ contains an $F$-tiling that covers at least $\beta \cdot k \cdot v(R)$ vertices of $R[k]$. Then $R[k,\rho, \varepsilon]$ also contains an $F$-tiling that covers at least $\beta \cdot k \cdot v(R)$ vertices of $R[k,\rho, \varepsilon]$. 
\end{lemma}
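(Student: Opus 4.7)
My plan is to reduce the claim to the standard Blowup Lemma of Komlós--Sárközy--Szemerédi by viewing the $F$-tiling as a bounded-degree graph to be embedded. Let $\mathcal{T}=\{F_1,\dots,F_t\}$ be an $F$-tiling in $R[k]$ covering at least $\beta k v(R)$ vertices. For each copy $F_s$, I would record its \emph{type map} $\sigma_s\colon V(F)\to V(R)$ assigning each vertex of $F_s$ to the cluster of $R[k]$ containing it; since $F_s\subseteq R[k]$, type maps send edges of $F$ to edges of $R$. Writing $c_i$ for the number of vertices of the $i$-th cluster used by $\mathcal{T}$, let $H^\ast$ be the $R$-partite graph on $V(R[k])$ whose components are the copies $F_1,\dots,F_t$ together with isolated vertices filling each cluster. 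Then $H^\ast$ has maximum degree at most $\Delta(F)$, and finding a typed copy of $H^\ast$ inside $R[k,\rho,\varepsilon]$ (each part mapped into its corresponding cluster) immediately yields the desired $F$-tiling.

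The first step would be to super-regularize the blowup: for each edge $ij\in E(R)$ and each side of the corresponding pair in $R[k,\rho,\varepsilon]$, delete the at most $\varepsilon k$ atypical vertices (those with degree less than $(\rho-\varepsilon)k$ into the opposite cluster). The surviving subsets $V_i'$ carry a super-regular blowup of $R$ with density at least $\rho-\varepsilon$, with $|V_i'|\ge (1-O(\varepsilon))k$ controlled by local cluster degrees. The second step is to shave an $O(\varepsilon)$-fraction of copies from $\mathcal{T}$ to ensure $c_i\le |V_i'|$ for every $i$; by choosing $\varepsilon$ much smaller than $\beta$, the resulting coverage loss is absorbed, so the adjusted tiling still covers at least $\beta k v(R)$ vertices. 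The third step is to invoke the Blowup Lemma to embed the adjusted $H^\ast$ (padded with isolated vertices to fill $\bigsqcup V_i'$) into the super-regular blowup on the $V_i'$; since $\Delta(H^\ast)\le\Delta(F)$ is bounded, this succeeds whenever $\varepsilon$ is sufficiently small in terms of $\Delta(F)$ and $\rho$, and the images of $F_1,\dots,F_t$ form the required tiling.

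The principal obstacle is ensuring that $\varepsilon_{\ref{LEMMA:blowup}}$ can be taken to depend only on $F,\beta,\rho$ and not on $R$ or $k$. A naive implementation of super-regularization removes $\Theta(\varepsilon k)$ vertices per adjacent cluster, producing per-cluster losses $\Theta(\varepsilon k\cdot d_R(V_i))$ that grow with the degrees of $R$. This is handled by the standard observation that clusters of very high relative degree contribute negligibly to any $F$-tiling of bounded piece-size, so one may pre-emptively discard them; alternatively, one may bypass super-regularization entirely and embed the copies of $\mathcal{T}$ one at a time into $R[k,\rho,\varepsilon]$ using an inductive greedy argument, noting that at each step the set of vertices obstructing the extension has size $O(\varepsilon k)$ per cluster and is easily avoided as long as $\varepsilon\ll \beta/v(F)$. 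Either approach yields the lemma with $\varepsilon_{\ref{LEMMA:blowup}}(F,\beta,\rho)$ as claimed.
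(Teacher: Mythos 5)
The paper does not prove this lemma; it simply cites \cite[Lemma~6]{GH12}, so there is no in-paper argument to compare yours against. Evaluating your sketch on its own merits: the overall idea (encode the tiling as a bounded-degree $R$-partite graph $H^\ast$, super-regularize, then embed) is the standard template, but as written several steps do not go through. The deepest problem is that your ``shave an $O(\varepsilon)$-fraction of copies'' step strictly reduces coverage, while the stated conclusion asks for \emph{exactly} the same $\beta k\,v(R)$ vertices as in the hypothesis; you cannot absorb that loss inside $\beta$ unless the hypothesis itself carried slack. A proof along these lines either has to be stated with a slightly weakened conclusion or has to recover the shaved copies by a separate argument; your sketch has no mechanism for either.

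The two remedies you offer for the super-regularization obstacle also have real gaps. Discarding clusters of high $R$-degree is not harmless: in a dense $R$ essentially every cluster can have degree exceeding $1/\varepsilon$, so this step can delete most or all of $V(R)$ and destroy the tiling you are trying to transfer. The alternative greedy route is mischaracterized. Once several copies have been embedded, the set of vertices you must avoid in a cluster is the set of \emph{already used} vertices, not merely the $O(\varepsilon k)$ atypical ones, and this set can grow to nearly all of $V_i$ when $c_i$ is close to $k$; the condition $\varepsilon \ll \beta/v(F)$ is irrelevant to that. Regularity-based greedy embedding supports roughly $(1-\gamma)c_i$ vertices per cluster for some fixed slack $\gamma$, again leaving a shortfall. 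To close the gap one needs either a perfect-matching/Blowup-type finish for the final $\gamma$-fraction, or a routing argument that rebalances which copies go where, neither of which is present in your sketch.
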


\section{Proof of Theorem~\ref{THM:Main-H-tiling}}\label{SEC:Proof-H-tiling}
In this section, we present the proof of Theorem~\ref{THM:Main-H-tiling}. 
Our proof builds on the framework used by Grosu--Hladk{\'y}~\cite{GH12} (which is a variation of the technique developed by Koml{\'o}s~\cite{Kom00}). However, two crucial aspects require new ideas and additional work: 
First, by the Blowup Lemma (Lemma~\ref{LEMMA:blowup}), to show that $G$ contains an $H$-tiling covering $6\beta$ proportions of the vertex set, it suffices to show that a reduced graph of $G$ (with respect to a certain regular partition of $G$) contains an $H$-tiling of the same proportion. Since, by the Slicing Lemma (Lemma~\ref{LEMMA:blowup-refine-partition}), the blowup of a reduced graph is also a reduced graph (corresponding to a refined regular partition of $G$), it suffices to show that the blowup of a dense graph $R$ contains a large $H$-tiling. This is achieved by finding a large $\{K_2, H, \hat{H} \}$-tiling in $R$ (Proposition~\ref{PROP:H-K2-H-hat-tiling}). The choice of $K_2$ and $\hat{H}$ is not arbitrary; these are carefully selected based on properties established in  Proposition~\ref{PROP:K2-H-hat-decomposition}.
Second, it is nontrivial and quite technical to show that a dense graph contains a large $\{K_2, H, \hat{H}\}$-tiling. To achieve this, we adopt the framework developed by Allen--B\"{o}ttcher--Hladk\'{y}--Piguet in~\cite{ABHP15} for finding large $K_3$-tilings. 

\begin{figure}[H]
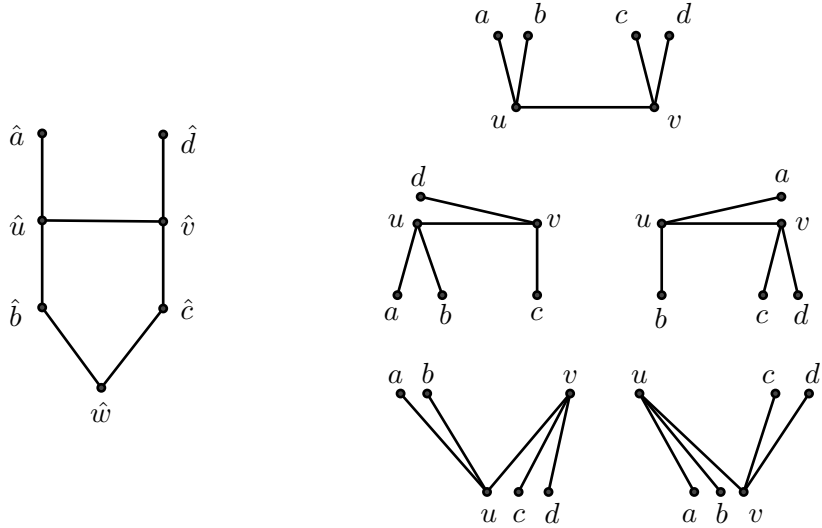

\centering
\tikzset{every picture/.style={line width=1pt}} 

\caption{Five ways to embed $H$ into a blowup of $\hat{H}$.} 
\label{Fig:H-hat-decomposition}
\end{figure}

\begin{proposition}\label{PROP:K2-H-hat-decomposition}
    The following statements hold for every integer $t \ge 1$. 
    \begin{enumerate}[label=(\roman*)]
        \item\label{PROP:K2-H-hat-decomposition-1} The blowup $K_{2}[t]$ contains an $H$-tiling of size $\left\lfloor t/3 \right\rfloor$. In particular, $K_{2}[6]$ contains a perfect $H$-tiling. 
        \item\label{PROP:K2-H-hat-decomposition-2} The blowup $\hat{H}[t]$ contains an $H$-tiling of size $\left\lfloor t/2 \right\rfloor + 4\cdot \left\lfloor t/6\right\rfloor$. In particular, $\hat{H}[6]$ contains a perfect $H$-tiling. 
    \end{enumerate}
\end{proposition}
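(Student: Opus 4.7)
For Part~\ref{PROP:K2-H-hat-decomposition-1}, I would exploit that $H$ is bipartite with bipartition $\{u,c,d\} \sqcup \{v,a,b\}$, so $H \subseteq K_{3,3}$. Writing $K_{2}[t] = K_{t,t}$ with sides $A,B$, partition each of $A$ and $B$ into $\lfloor t/3 \rfloor$ disjoint triples; every pair of triples (one from each side) spans a $K_{3,3}$ and therefore contains a copy of $H$. This produces $\lfloor t/3 \rfloor$ pairwise vertex-disjoint copies of $H$, and in particular $K_2[6]$ is perfectly tiled by two such copies.

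For Part~\ref{PROP:K2-H-hat-decomposition-2}, the plan is to combine appropriate multiplicities of five explicit homomorphisms $\phi_i \colon H \to \hat{H}$, namely the five embeddings depicted in Figure~\ref{Fig:H-hat-decomposition}:
\begin{align*}
  \phi_1 &\colon (u,v,a,b,c,d) \mapsto (\hat{u},\hat{v},\hat{a},\hat{a},\hat{d},\hat{d}), \\
  \phi_2 &\colon (u,v,a,b,c,d) \mapsto (\hat{u},\hat{v},\hat{b},\hat{b},\hat{c},\hat{u}), \\
  \phi_3 &\colon (u,v,a,b,c,d) \mapsto (\hat{u},\hat{v},\hat{v},\hat{b},\hat{c},\hat{c}), \\
  \phi_4 &\colon (u,v,a,b,c,d) \mapsto (\hat{w},\hat{c},\hat{b},\hat{b},\hat{w},\hat{w}), \\
  \phi_5 &\colon (u,v,a,b,c,d) \mapsto (\hat{w},\hat{b},\hat{c},\hat{c},\hat{w},\hat{w}).
\end{align*}
Each $\phi_i$ is verified by direct inspection to be a valid homomorphism: every one of the five edges $uv, ua, ub, vc, vd$ of $H$ is sent to an edge of $\hat{H}$. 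Given this, a copy of $H$ realising $\phi_i$ inside $\hat{H}[t]$ is produced by selecting in each blowup class $U_{\hat{x}}$ exactly $|\phi_i^{-1}(\hat{x})|$ distinct vertices.

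The core construction is to take $\lfloor t/2 \rfloor$ vertex-disjoint copies of $\phi_1$ together with $\lfloor t/6 \rfloor$ vertex-disjoint copies of each of $\phi_2, \phi_3, \phi_4, \phi_5$; this yields $\lfloor t/2 \rfloor + 4\lfloor t/6 \rfloor$ copies of $H$ in total. The disjoint selections are possible because the total number of vertices demanded from each class is at most $t$:
\begin{align*}
  U_{\hat{u}}, U_{\hat{v}} &\colon\ \lfloor t/2 \rfloor + 3\lfloor t/6 \rfloor \le t, \\
  U_{\hat{a}}, U_{\hat{d}} &\colon\ 2\lfloor t/2 \rfloor \le t, \\
  U_{\hat{b}}, U_{\hat{c}}, U_{\hat{w}} &\colon\ 6\lfloor t/6 \rfloor \le t.
\end{align*}
When $t = 6$ every inequality above is an equality, so all $42$ vertices of $\hat{H}[6]$ are covered and we obtain a perfect $H$-tiling of size $7$.

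The main obstacle is guessing the right five homomorphisms. Because $\hat{a}$ and $\hat{d}$ each have a unique neighbour ($\hat{u}$ and $\hat{v}$ respectively), and $\hat{w}$ has only the two neighbours $\hat{b}, \hat{c}$, the image-multiset of any homomorphism $H \to \hat{H}$ is strongly constrained, and the five candidates are essentially forced up to the $\hat{H}$-automorphism $\hat{u}\leftrightarrow\hat{v}$, $\hat{a}\leftrightarrow\hat{d}$, $\hat{b}\leftrightarrow\hat{c}$, under which $\phi_2$ pairs with $\phi_3$ and $\phi_4$ pairs with $\phi_5$. Once the $\phi_i$ are in hand the rest of the argument is routine linear bookkeeping.
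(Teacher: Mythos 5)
Your proof is correct and essentially matches the paper's: Part~(i) uses $H \subseteq K_{3,3}$, and Part~(ii) constructs the same five homomorphisms $H \to \hat{H}$ (your $\phi_5$ differs from the paper's $\psi_5$ only by precomposition with an automorphism of $H$, hence has the same image-multiset) taken with the same multiplicities $\lfloor t/2\rfloor$ and $\lfloor t/6\rfloor$. Your explicit per-class vertex accounting, which the paper leaves implicit, tidily certifies the disjointness of the $\lfloor t/2\rfloor + 4\lfloor t/6\rfloor$ selected copies.
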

\begin{proof}[Proof of Proposition~\ref{PROP:K2-H-hat-decomposition}]
    Fix an integer $t \ge 1$. 
    Proposition~\ref{PROP:K2-H-hat-decomposition}~\ref{PROP:K2-H-hat-decomposition-1} follows trivially from the fact that $H \subseteq K_{3,3}$. So it suffices to consider~\ref{PROP:K2-H-hat-decomposition-2}. 
    
    Consider the following five embeddings $\psi_1, \ldots, \psi_5$ of $H$ into $\hat{H}[t]$ (see Figure~\ref{Fig:H-hat-decomposition}):
    \begin{enumerate}[label=(\roman*)]
        \item $\psi_1(u) \in [\hat{u}]$, $\psi_1(v) \in [\hat{v}]$, $\{\psi_1(a), \psi_1(b)\} \subseteq [\hat{a}]$, and $\{\psi_1(c), \psi_1(d)\} \subseteq [\hat{d}]$. 
        \item $\{\psi_2(u), \psi_2(d)\} \subseteq [\hat{u}]$, $\psi_2(v) \in [\hat{v}]$, $\{\psi_2(a), \psi_2(b)\} \subseteq [\hat{b}]$, and $\psi_2(c) \in [\hat{c}]$.
        \item $\psi_3(u) \in [\hat{u}]$, $\{\psi_3(v), \psi_3(a) \} \subseteq [\hat{v}]$, $\psi_3(b) \in [\hat{b}]$, and $\{\psi_3(c), \psi_3(d)\} \subseteq [\hat{c}]$. 
        \item $\{\psi_4(u), \psi_4(c), \psi_4(d) \} \subseteq [\hat{w}]$, $\psi_4(v) \in [\hat{c}]$, and $\{\psi_4(a), \psi_4(b)\} \subseteq [\hat{b}]$. 
        \item $\psi_5(u) \in [\hat{b}]$, $\{\psi_5(v), \psi_5(a), \psi_5(b)\} \subseteq [\hat{w}]$, and $\{\psi_5(c), \psi_5(d)\} \subseteq [\hat{c}]$.
    \end{enumerate}
    Let $m \coloneqq \left\lfloor t/2 \right\rfloor + 4\cdot \left\lfloor t/6 \right\rfloor$. We can embed $m$ vertex-disjoint copies of $H$ by first embedding $\left\lfloor t/2 \right\rfloor$ copies of $H$ using the map $\psi_1$, and then embedding the remaining  $4\cdot \left\lfloor t/6 \right\rfloor$ copies of $H$ using the maps $\psi_2, \ldots, \psi_4$, with each map embedding $\left\lfloor \frac{t}{6} \right\rfloor$ copies. 
\end{proof}

The following result, which is a key ingredient in the proof of Theorem~\ref{THM:Main-H-tiling}, states that if a dense graph has a small $H$-matching number, then one can find a $\{K_2, H, \hat{H}\}$-tiling that covers substantially more vertices than a maximum $H$-tiling.
Its proof is postponed to Section~\ref{SEC:Proof-K2-H-hatH-tiling}. 
\begin{proposition}\label{PROP:H-K2-H-hat-tiling}
    Let $\beta \in (0, 1/6)$ be a real number. 
    For every $\varepsilon > 0$ there exist $\delta_{\ref{PROP:H-K2-H-hat-tiling}} = \delta_{\ref{PROP:H-K2-H-hat-tiling}}(\beta,\varepsilon)>0$ and $N_{\ref{PROP:H-K2-H-hat-tiling}} = N_{\ref{PROP:H-K2-H-hat-tiling}}(\beta,\varepsilon)$ such that the following holds for all $n \ge N_{\ref{PROP:H-K2-H-hat-tiling}}$. 
    Suppose that $G$ is an $n$-vertex graph with $\nu(H, G) \le \beta n$ and 
    \begin{align*}
        |G|
        \ge \left(\Xi(\beta) + \varepsilon \right) n^2
    \end{align*}
    Then $G$ contains a $\{K_2, H, \hat{H}\}$-tiling that covers at least $6 \cdot \nu(H, G) + \delta_{\ref{PROP:H-K2-H-hat-tiling}} n$ vertices of $G$. 
\end{proposition}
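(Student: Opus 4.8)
The plan is to adapt the Allen--B\"{o}ttcher--Hladk\'{y}--Piguet framework~\cite{ABHP15} for finding large $K_3$-tilings to the present setting, where the target is a $\{K_2, H, \hat{H}\}$-tiling. I would fix a maximum $H$-tiling $\mathcal{H}$ in $G$ of size $k = \nu(H,G) \le \beta n$, covering a vertex set $W$ with $|W| = 6k$, and let $U = V(G)\setminus W$ be the uncovered set. Since $G[U]$ is $H$-free, Fact~\ref{FACT:H-free-Turan} gives $e_G(U) < 5|U|$, so essentially all of the $(\Xi(\beta)+\varepsilon)n^2$ edges of $G$ lie in $G[W]$ or in the bipartite graph $G[W,U]$. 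The first task is a \emph{weight/counting} argument: classify each copy $H_j \in \mathcal{H}$ (and each vertex of $U$) according to how it interacts with the rest, and use the edge-count lower bound together with the optimization function $\Psi_\alpha$ to deduce that the ``excess'' edges incident to $W$ are too numerous to be absorbed by a structure that only admits $H$-tilings of total ratio $6\beta$. Concretely, the variables $y_0,y_1,y_2,y_3$ in $\Psi_\alpha$ should track the densities of four types of copies in $\mathcal{H}$ (distinguished by the number of ``useful'' edges they send into $U$ or into other copies), and Proposition~\ref{PROP:optimization-Psi} certifies that if $|G| \ge (\Xi(\beta)+\varepsilon)n^2 = (\Psi_\beta^\ast + \varepsilon)n^2$, one of these types must occur with density bounded below by a function of $\varepsilon$.

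Given that a positive-density family of copies $H_j$ each carries enough extra edges, the next step is the \emph{local augmentation}: for such a copy $H_j$, together with a bounded number of vertices from $U$ and possibly a bounded number of other copies from $\mathcal{H}$, I would exhibit a $\{K_2, H, \hat{H}\}$-tiling on the union of these vertex sets that covers strictly more vertices than the $H$-copies it replaces — gaining at least one or two vertices per augmentation. The combinatorial heart here is to enumerate the ways an $H$ (five edges, the tree $\{uv,ua,ub,vc,vd\}$) plus a few pendant or bridging edges can be rearranged: e.g., if $H_j$ sends two independent edges into $U$ from its two ``centers'' $u,v$, one can sometimes reorganize $H_j \cup \{$two new vertices$\}$ into $H \cup K_2$, or if two copies $H_i, H_j$ are joined by an edge in the right place, their twelve vertices plus one external vertex can host an $\hat{H}$-plus-$H$ configuration covering more than twelve vertices. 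Proposition~\ref{PROP:K2-H-hat-decomposition} is not used here directly but motivates why $\hat{H}$ and $K_2$ are the right auxiliary graphs: their blowups tile perfectly into $H$, so any gain realized in the $\{K_2,H,\hat{H}\}$-tiling will later be upgraded (after regularization) to a genuine gain in the $H$-tiling.

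The \emph{globalization} step then uses Proposition~\ref{PROP:matching-mixed-hypergraphs}: since there is a positive density ($\ge c(\varepsilon) n$) of disjointly-augmentable configurations, and each augmentation touches only $O(1)$ vertices, I would set up an auxiliary system of hypergraphs (one per augmentation type, whose edges are the vertex sets of the local gadgets) each with at least $\Omega(n^{r_i})$ edges, and extract $\Omega(n)$ pairwise-disjoint gadgets by the rainbow-matching proposition. Replacing the corresponding $H$-copies by the gadget tilings, and keeping the untouched copies of $\mathcal{H}$, produces a $\{K_2,H,\hat{H}\}$-tiling covering at least $6k + \delta n$ vertices, with $\delta = \delta(\beta,\varepsilon)>0$ coming from the density bound $c(\varepsilon)$. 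The main obstacle I anticipate is the case analysis in the \emph{local augmentation} step: one must handle copies $H_j$ that interact with $U$ only weakly (say, every uncovered vertex has few neighbours in $W$) versus those that interact strongly, and also the ``internal'' configurations where the excess edges run between two copies of $\mathcal{H}$ rather than into $U$; ensuring that at least one of these regimes always yields a gain — and correctly matching each regime to the right coordinate of $\Psi_\beta$ so that Proposition~\ref{PROP:optimization-Psi} closes the argument — is where the real work lies, and is presumably why this proof is deferred to Section~\ref{SEC:Proof-K2-H-hatH-tiling}.
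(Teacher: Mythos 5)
Your proposal correctly identifies the high-level ingredients the paper uses: the Allen--B\"ottcher--Hladk\'y--Piguet framework, the fact that $G[\overline{U}]$ is $H$-free so almost all edges touch the covered set, the optimization via $\Psi_\alpha$, local augmenting gadgets, and Proposition~\ref{PROP:matching-mixed-hypergraphs} for extracting disjoint gadgets. Your assessment that the bulk of the work is the case analysis of local rearrangements (Section~\ref{SEC:proof-LEMMA-Hi-Hj-U-bar-edges}) is also accurate.

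However, there is a genuine logical gap in the globalization step. You argue directly: many edges $\Rightarrow$ positive density of augmentable configurations $\Rightarrow$ extract $\Omega(n)$ pairwise-disjoint gadgets via the rainbow-matching proposition. The problem is that having $\Omega(n^2)$ augmentable configurations does not yield $\Omega(n)$ disjoint ones --- all the augmentable pairs could share a common copy of $H$ (a ``star'' in the auxiliary structure), in which case the matching number is $1$ even though there are quadratically many augmentable pairs. Also, your description of the rainbow-matching application (``one hypergraph per augmentation type'') would produce only a constant number of disjoint gadgets, not $\Omega(n)$: Proposition~\ref{PROP:matching-mixed-hypergraphs} gives one transversal edge per input hypergraph, so you need $\Omega(n)$ hypergraphs, one per already-chosen disjoint pair of $H$-copies. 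The paper sidesteps this by arguing in the contrapositive: it assumes no good $\{K_2,H,\hat{H}\}$-tiling exists, defines an auxiliary graph $\mathcal{A}$ whose vertices are the copies in $\mathcal{H}$ with edges the ``$\ell$-extendable'' pairs, proves $\nu(\mathcal{A}) \le \delta n$ (Lemma~\ref{LEMMA:aux-graph-matching-number}, precisely because a matching of size $\delta n$ in $\mathcal{A}$ plus rainbow-matching on the external sets $S_i \subseteq \overline{U}$ would contradict the hypothesis), removes the small matching to get $\mathcal{H}'$ in which no pair is extendable, and then --- crucially --- introduces the set $L$ of covered vertices with at least $19\delta^{1/6}n$ neighbours in $\overline{U}$ and classifies $\mathcal{H}'$ by $|V(H_i) \cap L| \in \{0,1,2,3\}$ (Lemma~\ref{LEMMA:H-R-at-most-3-edges}). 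The densities $y_0,\ldots,y_3$ in $\Psi_\alpha$ are exactly these class sizes divided by $n$, not (as you propose) types distinguished by ``number of useful edges sent into $U$.'' The non-extendability hypothesis then yields the edge bounds of Lemmas~\ref{LEMMA:H0-H1-upper-bound}--\ref{LEMMA:H3-upper-bound}, and Proposition~\ref{PROP:optimization-Psi} closes the contradiction. So your direct framing needs to be replaced by the matching-number dichotomy plus the $L$-based classification before the counting can be carried out.
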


A direct consequence of Propositions~\ref{PROP:K2-H-hat-decomposition} and~\ref{PROP:H-K2-H-hat-tiling} is the following result, which states that every sufficiently large blowup of a dense graph contains an $H$-tiling of the expected size. 
\begin{corollary}\label{CORO:blowup-H-tiling}
     Let $\beta \in (0, 1/6)$ be a real number. 
    For every $\varepsilon > 0$ there exist $N_{\ref{CORO:blowup-H-tiling}} = N_{\ref{CORO:blowup-H-tiling}}(\beta,\varepsilon)$ and $M_{\ref{CORO:blowup-H-tiling}} = M_{\ref{CORO:blowup-H-tiling}}(\beta,\varepsilon)$ such that the following holds for all $n \ge N_{\ref{PROP:H-K2-H-hat-tiling}}$. 
    Suppose that $G$ is a graph on $n$ vertices with $|G| \ge \left(\Xi(\beta) + \varepsilon\right)n^2$. 
    Then $G[M_{\ref{CORO:blowup-H-tiling}}]$ contains an $H$-tiling of size at least $\beta M_{\ref{CORO:blowup-H-tiling}} n$. 
\end{corollary}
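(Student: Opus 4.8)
The plan is to combine Proposition~\ref{PROP:H-K2-H-hat-tiling} and Proposition~\ref{PROP:K2-H-hat-decomposition} with an iteration argument on blowups. Fix $\beta \in (0,1/6)$ and $\varepsilon > 0$. First I would set $N_{\ref{CORO:blowup-H-tiling}} \coloneqq N_{\ref{PROP:H-K2-H-hat-tiling}}(\beta,\varepsilon)$ and choose $M_{\ref{CORO:blowup-H-tiling}}$ to be a sufficiently large multiple of $6$ (to be specified below). Given $G$ on $n \ge N_{\ref{CORO:blowup-H-tiling}}$ vertices with $|G| \ge (\Xi(\beta)+\varepsilon)n^2$, consider the blowup $G' \coloneqq G[M]$ where $M \coloneqq M_{\ref{CORO:blowup-H-tiling}}$, so $G'$ has $N \coloneqq Mn$ vertices and $|G'| = M^2 |G| \ge (\Xi(\beta)+\varepsilon)(Mn)^2 = (\Xi(\beta)+\varepsilon)N^2$. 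Thus $G'$ itself satisfies the density hypothesis of Proposition~\ref{PROP:H-K2-H-hat-tiling} with the same $\beta$ and $\varepsilon$.

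The key observation is that a maximum $H$-tiling in $G'$ cannot be too small. Either $\nu(H,G') > \beta N$, in which case $G' = G[M]$ already contains an $H$-tiling of size more than $\beta N = \beta M n$ and we are done; or $\nu(H,G') \le \beta N$, and then Proposition~\ref{PROP:H-K2-H-hat-tiling} (applied to $G'$, whose order $N = Mn$ exceeds $N_{\ref{PROP:H-K2-H-hat-tiling}}$ once $M \ge 1$) yields a $\{K_2,H,\hat{H}\}$-tiling $\mathcal{T}$ in $G'$ covering at least $6\nu(H,G') + \delta N$ vertices, where $\delta \coloneqq \delta_{\ref{PROP:H-K2-H-hat-tiling}}(\beta,\varepsilon) > 0$. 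Now I would apply Proposition~\ref{PROP:K2-H-hat-decomposition} locally: each copy of $K_2$ in $\mathcal{T}$ sits inside a copy of $K_2[M]$ in $G'$ (since $G'$ is the $M$-blowup of $G$, every edge of $G'$ blows up to a $K_2[M]$ on the $2M$ vertices of the two corresponding blobs — but we must be slightly careful since $\mathcal{T}$'s copies already occupy some of those vertices), and similarly each copy of $H$ blows up, and each copy of $\hat{H}$ lies inside a $\hat{H}[M]$. The cleanest route is to take $\mathcal{T}$ to be a $\{K_2, H, \hat{H}\}$-tiling in $G$ itself (not $G'$) of the same total coverage — which we get by first noting $\nu(X, G[M]) \ge M \cdot \nu(X, G)$ is the wrong direction, so instead I would run Proposition~\ref{PROP:H-K2-H-hat-tiling} on $G$ directly: if $\nu(H,G) > \beta n$ we are trivially done (a single $H$ in $G$ gives $n$ disjoint copies... no — rather $\nu(H, G[M]) \ge M\nu(H,G)$), so assume $\nu(H,G)\le \beta n$ and obtain from Proposition~\ref{PROP:H-K2-H-hat-tiling} a $\{K_2,H,\hat H\}$-tiling $\mathcal{T}_0$ in $G$ covering $\ge 6\nu(H,G) + \delta n$ vertices. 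Each member $X \in \mathcal{T}_0$ uses $v(X)$ vertices of $G$; in $G[M]$ those vertices blow up to a copy of $X[M]$ on $v(X)\cdot M$ vertices, and by Proposition~\ref{PROP:K2-H-hat-decomposition} together with $H \subseteq K_{3,3}$ (so $H[M]$ contains a perfect $H$-tiling when $6 \mid M$, trivially, and $K_2[M]$, $\hat{H}[M]$ have perfect $H$-tilings when $6 \mid M$), each $X[M]$ has a perfect $H$-tiling of size $v(X)M/6$. Summing over $X \in \mathcal{T}_0$ and over the blobs untouched by $\mathcal{T}_0$'s vertices (which we may ignore), we obtain an $H$-tiling of $G[M]$ of size at least $\tfrac{M}{6}\bigl(6\nu(H,G) + \delta n\bigr) = M\nu(H,G) + \tfrac{\delta M}{6} n \ge \tfrac{\delta M}{6} n$.

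To finish, I need this to beat $\beta M n$. That does not follow from a single application, so the argument must be iterated: repeatedly replace $G$ by a blowup and reapply Proposition~\ref{PROP:H-K2-H-hat-tiling}, gaining an additive $\frac{\delta}{6}$ fraction of the current vertex count each time, until the $H$-matching ratio exceeds $\beta$. Concretely, set $M_0 \coloneqq 6$ and $M_{j+1} \coloneqq 6 M_j$; after $j$ steps the blowup $G[M_j]$ has $M_j n$ vertices and an $H$-tiling covering a $\tau_j$ fraction of them with $\tau_{j+1} \ge \tau_j + \tfrac{\delta}{6}$ as long as $\tau_j < 6\beta$ (using that the density $\Xi(\beta)+\varepsilon$ is preserved under blowup and that $\nu(H, G[M_{j+1}]) \ge \tfrac{M_{j+1}}{M_j}\nu(H,G[M_j])$ so the tiling ratio is non-decreasing under passing to a blowup — this monotonicity is exactly what lets us "accumulate" the $\delta$-gains). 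After at most $\lceil 6/\delta \rceil$ steps we reach $\tau_j \ge 6\beta$, i.e.\ an $H$-tiling of size $\ge \beta M_j n$ in $G[M_j]$; then take $M_{\ref{CORO:blowup-H-tiling}} \coloneqq M_j = 6^{\lceil 6/\delta\rceil + 1}$, which depends only on $\beta$ and $\varepsilon$ as required, and use that $G[M_{\ref{CORO:blowup-H-tiling}}]$ contains $G[M_j]$ as... actually $G[M_{\ref{CORO:blowup-H-tiling}}]$ \emph{equals} $G[M_j]$ with this choice. The main obstacle is making the iteration rigorous: one must verify that the tiling ratio $\tau(H, G[M])$ is genuinely non-decreasing in $M$ along the chain $M_j \mid M_{j+1}$ (so that the $\delta$-gain from each application of Proposition~\ref{PROP:H-K2-H-hat-tiling} is not lost when passing to the next blowup), and that the density hypothesis and the hypothesis $n \ge N_{\ref{PROP:H-K2-H-hat-tiling}}$ survive each step — both are straightforward since blowing up scales edge counts and vertex counts by the same square factor, but they need to be stated. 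Everything else is bookkeeping via Propositions~\ref{PROP:K2-H-hat-decomposition} and~\ref{PROP:H-K2-H-hat-tiling}.
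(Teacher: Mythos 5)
Your argument is essentially the paper's proof: iterate $G \mapsto G[6]$, at each step apply Proposition~\ref{PROP:H-K2-H-hat-tiling} to obtain a $\{K_2, H, \hat{H}\}$-tiling covering $\delta_{\ref{PROP:H-K2-H-hat-tiling}} \cdot v$ extra vertices, pass to the $6$-blowup and use Proposition~\ref{PROP:K2-H-hat-decomposition} (together with the trivial fact that $H[6]$ has a perfect $H$-tiling) to convert this into an $H$-tiling of the blowup, and invoke the monotonicity of the covering ratio under blowups so that the $\delta$-gains accumulate --- this is precisely Claim~\ref{CLAIM:tau-increase} and the paper's proof of the corollary. The explicitly self-corrected false start mid-paragraph and the looser step count $\lceil 6/\delta \rceil$ in place of $\lceil 6\beta/\delta_{\ref{PROP:H-K2-H-hat-tiling}} \rceil$ are cosmetic; the underlying argument is the same.
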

\begin{proof}[Proof of Corollary~\ref{CORO:blowup-H-tiling}]
    Fix $\beta \in (0, 1/6)$ and $\varepsilon > 0$. 
    Let $\delta_{\ref{PROP:H-K2-H-hat-tiling}} = \delta_{\ref{PROP:H-K2-H-hat-tiling}}(\beta,\varepsilon)>0$ and $N_{\ref{PROP:H-K2-H-hat-tiling}} = N_{\ref{PROP:H-K2-H-hat-tiling}}(\beta,\varepsilon)$ be the constants returned by Proposition~\ref{PROP:H-K2-H-hat-tiling}. 
    Let $M_{\ref{CORO:blowup-H-tiling}} \coloneqq 6^{k_{\ast}}$, where $k_{\ast} \coloneqq \lceil 6 \beta/\delta_{\ref{PROP:H-K2-H-hat-tiling}} \rceil$. 
    Let $V \coloneqq V(G)$. 
    Set $G_0 \coloneqq G$, and for $i \le k_{\ast}$, define $G_{i+1} \coloneqq G_{i}[6]$. 

    \begin{claim}\label{CLAIM:tau-increase}
        For every $i \in [k_{\ast}]$, we have 
        \begin{align*}
            \tau(H, G_{i}) 
            \ge \min\left\{ \tau(H, G_{i-1}) + \delta_{\ref{PROP:H-K2-H-hat-tiling}},~6 \beta \right\}.  
        \end{align*}
    \end{claim}
    \begin{proof}[Proof of Claim~\ref{CLAIM:tau-increase}]
        Fix $i \in [k_{\ast}]$. 
        If $\tau(H, G_{i-1}) \ge 6 \beta$, then it trivially follows that $\tau(H, G_{i}) \ge \tau(H, G_{i-1}) \ge 6 \beta$, as desired. 
        So we may assume that $\tau(H, G_{i-1}) < 6 \beta$. 
        Since $v(G_{i}) = 6^{i} \cdot v(G) \ge N_{\ref{PROP:H-K2-H-hat-tiling}}$ and 
        \begin{align*}
            |G_{i-1}|
            = 36^{i-1} \cdot |G_0|
            \ge \left(\Xi(\beta) + \varepsilon\right) \left(v(G_{i-1})\right)^2, 
        \end{align*}
        applying Proposition~\ref{PROP:H-K2-H-hat-tiling} to $G_{i-1}$, we find a $\{K_2, H, \hat{H}\}$-tiling $\mathcal{H}$ that covers at least $6 \cdot \nu(H, G_{i-1}) + \delta_{\ref{PROP:H-K2-H-hat-tiling}} \cdot v(G_{i-1})$ vertices of $G_{i-1}$. 
        
        Consider the blowup $G_{i} = G_{i-1}[6]$.
        Note that $\mathcal{H}$ naturally gives a $\{K_2[6], H[6], \hat{H}[6]\}$-tiling, denoted by $\mathcal{H}'$, in $G_{i}$ that covers 
        \begin{align*}
            6 \cdot |V(\mathcal{H})|
            \ge 6 \cdot \left(6 \cdot \nu(H, G_{i-1}) + \delta_{\ref{PROP:H-K2-H-hat-tiling}} \cdot v(G_{i-1})\right)
            = 36 \cdot \nu(H, G_{i-1}) + \delta_{\ref{PROP:H-K2-H-hat-tiling}} \cdot v(G_{i})
        \end{align*}
        vertices in $G_{i}$. 
        By Proposition~\ref{PROP:K2-H-hat-decomposition} and the fact that $H[6]$ contains a perfect $H$-tiling, each member in $\mathcal{H}'$ contains a perfect $H$-tiling. Thus we can transform $\mathcal{H}'$ into an $H$-tiling in $G_i$ that covers the same number of vertices as $\mathcal{H}'$. 
        It follows that 
        \begin{align*}
            \tau(H, G_{i})
            \ge \frac{36 \cdot \nu(H, G_{i-1}) + \delta_{\ref{PROP:H-K2-H-hat-tiling}} \cdot v(G_{i})}{v(G_i)}
            = \frac{36 \cdot \nu(H, G_{i-1})}{6 \cdot v(G_{i-1})} + \delta_{\ref{PROP:H-K2-H-hat-tiling}}
            =  \tau(H, G_{i-1}) + \delta_{\ref{PROP:H-K2-H-hat-tiling}}, 
        \end{align*}
        as desired. 
    \end{proof}

    Since $\tau(H, G_0) + k_{\ast} \cdot \delta_{\ref{PROP:H-K2-H-hat-tiling}} \ge \lceil 6 \beta/\delta_{\ref{PROP:H-K2-H-hat-tiling}} \rceil \cdot \delta_{\ref{PROP:H-K2-H-hat-tiling}} \ge 6 \beta$, it follows from Claim~\ref{CLAIM:tau-increase} that $\tau(H,G_{k_{\ast}}) \ge \max\{\tau(H, G_0) + k_{\ast} \cdot \delta_{\ref{PROP:H-K2-H-hat-tiling}},~6\beta\} = 6 \beta$. 
    This completes the proof of Corollary~\ref{CORO:blowup-H-tiling}. 
\end{proof}

We are now ready to present the proof of Theorem~\ref{THM:Main-H-tiling}. 

\begin{proof}[Proof of Theorem~\ref{THM:Main-H-tiling}]
    Fix $\beta \in (0, 1/6)$ and $\varepsilon > 0$. We may assume that $\varepsilon$ is sufficiently small. 
    Let $\delta_{\ref{PROP:H-K2-H-hat-tiling}} = \delta_{\ref{PROP:H-K2-H-hat-tiling}}(\beta, \varepsilon) > 0$ and $N_{\ref{PROP:H-K2-H-hat-tiling}} = N_{\ref{PROP:H-K2-H-hat-tiling}}(\beta, \varepsilon)$ be the constants returned by Proposition~\ref{PROP:H-K2-H-hat-tiling}. 
    Let $N_{\ref{CORO:blowup-H-tiling}} = N_{\ref{CORO:blowup-H-tiling}}(\beta,\varepsilon)$ and $M_{\ref{CORO:blowup-H-tiling}} = M_{\ref{CORO:blowup-H-tiling}}(\beta,\varepsilon)$ be the constants returned by Corollary~\ref{CORO:blowup-H-tiling}. 
    Let $\rho, \varepsilon_1 > 0$ be sufficiently small constants that satisfy $\varepsilon_1 \ll \rho \ll \varepsilon$.  
    In particular, we can choose $\varepsilon_1$ sufficiently small such that $\varepsilon_1 \ll \varepsilon_{\ref{LEMMA:blowup}}$, where $\varepsilon_{\ref{LEMMA:blowup}} = \varepsilon_{\ref{LEMMA:blowup}}(H, \beta, \rho) > 0$ is the constant returned by Lemma~\ref{LEMMA:blowup}. 
    Let $m \gg \max\{\varepsilon_1^{-1}, N_{\ref{PROP:H-K2-H-hat-tiling}}\}$ be a sufficiently large integer. 
    Let $M_{\ref{LEMMA:regularity}} = M_{\ref{LEMMA:regularity}}(\varepsilon_1, m)$ be the constant returned by Lemma~\ref{LEMMA:regularity}. 
    Let $n$ be a sufficiently large integer and $G$ be an $n$-vertex graph with $|G| \ge \left(\Xi(\beta) + 2 \varepsilon\right)n^2$. 
    Let $V \coloneqq V(G)$. 

    Applying Lemma~\ref{LEMMA:regularity} to $G$, with $\varepsilon, m$ there corresponding to $\varepsilon_1, m$ here, we obtain an $\varepsilon_1$-regular partition $V_1 \cup \cdots \cup V_{k} = V(G)$ of $G$ for some $k \in [m, M_{\ref{LEMMA:regularity}}]$. 
    Let $R$ denote the corresponding $\rho$-reduced graph of $G$. It follows from~\eqref{equ:reduced-graph-lower-bound} that 
    \begin{align}\label{equ:R-edge-density}
        |R|
        \ge (1-o(1))\left( \frac{|G|}{n^2} - \frac{1}{2k} - \frac{\rho}{2} \right) k^2 
        \ge \left(\Xi(\beta) + \varepsilon\right) k^2. 
    \end{align}
    It follows from Corollary~\ref{CORO:blowup-H-tiling} that $R[M_{\ref{CORO:blowup-H-tiling}}]$ contains an $H$-tiling covering at least $6\beta$ of its vertex set. 
    Therefore, by Lemma~\ref{LEMMA:blowup}, $G$ also contains an $H$-tiling covering at least $6\beta$ of its vertex set. 
    This completes the proof of Theorem~\ref{THM:Main-H-tiling}. 
\end{proof}

\section{Proof of Proposition~\ref{PROP:H-K2-H-hat-tiling}}\label{SEC:Proof-K2-H-hatH-tiling}
We prove Proposition~\ref{PROP:H-K2-H-hat-tiling} in this section. 
For simplicity, we will omit the floors and ceilings from our calculations, as our focus is on asymptotic bounds.

Fix $\beta \in \left(0, \frac{1}{6}\right)$ and $\varepsilon > 0$. We may assume that $\varepsilon$ is sufficiently small. 
Let $\delta > 0$ be another sufficiently small constant such that $\delta \ll \varepsilon$.
Let $n$ be a sufficiently large integer. 
Let $G$ be an $n$-vertex graph such that 
\begin{align}\label{equ:assumption-G-lower-bound}
    \nu(H, G) \leq \beta n
    \quad\text{and}\quad 
    |G| 
    \ge \left(\Xi(\beta) + \varepsilon \right) n^2. 
\end{align}
Suppose to the contrary that $G$ does not contain a $\{K_2, H, \hat{H}\}$-tiling that covers at least $6 \cdot \nu(H,G) + \delta n$ vertices in $G$. 
Let $t \coloneqq \nu(H, G)$ and $\mathcal{H} = \{H_1, \ldots, H_t\}$ be an $H$-tiling in $G$ of size $t$, where each $H_i$ is a copy of $H$ in $G$. 
Let us label vertices in each $H_i$ by $\{u_i, v_i, a_i, b_i, c_i, d_i\}$ such that 
\begin{align*}
    H_i = \left\{u_iv_i, u_ia_i, u_ib_i, v_ic_i, v_id_i\right\}.
\end{align*}
Let 
\begin{align*}
    V \coloneqq V(H), \quad 
    U \coloneqq V(H_1) \cup \cdots \cup V(H_t),
    \quad\text{and}\quad 
    \overline{U} \coloneqq V\setminus U.  
\end{align*}
It follows from the definition of $\mathcal{H}$ that $|U| = 6 t$. 
Additionally, it follows from $\nu(H, G) = |\mathcal{H}| = t$ that the induced subgraph $G[\overline{U}]$ is $H$-free. Thus, by Fact~\ref{FACT:H-free-Turan}, we have 
\begin{align}
    |G[\overline{U}]| 
    \le 5 |\overline{U}|
    = 5 (n - 6t)
    \le 5n. 
\end{align}
Let 
\begin{align*}
    L 
    \coloneqq \left\{u \in U \colon |N_{G}(u) \cap \overline{U}| \ge 19 \delta^{1/6} n\right\}.
\end{align*}
For a pair of distinct members $\{H_i, H_j\}$ in  $\mathcal{H}$: 
\begin{itemize}
    \item We say $\{H_i, H_j\}$ is \textbf{extendable with respect to a set $S \subseteq \overline{U}$} if there exists a $\{K_2, H, \hat{H}\}$-tiling in the induced subgraph $G[V(H_i) \cup V(H_j) \cup S]$ that covers at least $13$ vertices in $V(H_i) \cup V(H_j) \cup S$.
    \item For an integer $\ell \ge 1$, we say $\{H_i, H_j\}$ is \textbf{$\ell$-extendable} if it is extendable with respect to at least $19 \delta n^{\ell}$ sets of size $\ell$ in $\overline{U}$. 
    \item We say $\{H_i, H_j\}$ is \textbf{extendable} if it is $\ell$-extendable for some $\ell \ge 1$. 
\end{itemize}
Define an auxiliary graph $\mathcal{A}$ whose vertex set is $\mathcal{H}$, and two members $H_i, H_j \in \mathcal{H}$ are adjacent in $\mathcal{A}$ iff the pair $\{H_i, H_j\}$ is extendable. 

Take a matching $\mathcal{M} \subseteq \mathcal{A}$ of maximum size, and let $\mathcal{H}' \subseteq \mathcal{H}$ denote the subfamily obtained by removing members of $\mathcal{H}$ that are covered by $\mathcal{M}$.
Note from the maximality of $\mathcal{M}$ that no pair in $\mathcal{H}'$ is $\ell$-extendable for any $\ell$. 

Below, we present several lemmas that are essential for proving Proposition~\ref{PROP:H-K2-H-hat-tiling}.  
\begin{lemma}\label{LEMMA:extendable-ell-upper-bound}
    Suppose that $\{H_i, H_j\} \subseteq \mathcal{H}$ is extendable. Then it is $\ell$-extendable for some integer $\ell \le 19$. 
\end{lemma}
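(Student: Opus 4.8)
\textbf{Proof plan for Lemma~\ref{LEMMA:extendable-ell-upper-bound}.}

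The plan is to show that extendability with respect to any large set $S \subseteq \overline{U}$ can be ``localized'' to a small witness. Suppose $\{H_i, H_j\}$ is $\ell$-extendable for some $\ell$; we may take $\ell$ minimal. By definition there are at least $19\delta n^\ell$ sets $S$ of size $\ell$ in $\overline{U}$ such that $G[V(H_i)\cup V(H_j)\cup S]$ admits a $\{K_2,H,\hat H\}$-tiling covering at least $13$ vertices. Since $|V(H_i)\cup V(H_j)| = 12$, any such tiling $\mathcal{T}$ must use at least one vertex of $S$; moreover, because $v(K_2)=2$, $v(H)=6$, $v(\hat H)=7$, the number of vertices of $S$ actually used by $\mathcal{T}$ is at most $v(\hat H) - 0 = 7$ if a single $\hat H$ swallows the $S$-vertices, but in general is bounded by the total size of the tiling; a short case analysis on which pieces of $\mathcal{T}$ touch $S$ shows that the vertices of $S$ meeting $\bigcup \mathcal{T}$ number at most some absolute constant — I expect at most $13 - 12 = 1$ more than necessary is the wrong count, so more carefully: the tiling covers at least $13$ of the at most $12+\ell$ vertices, and the pieces touching $S$ have total size at most (size of one $\hat H$) $+$ (leftover), giving a bound of the form $|S \cap V(\mathcal{T})| \le 19$ (seven from a $\hat H$ plus a few stray $K_2$'s and at most one $H$ straddling $S$). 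The precise constant $19$ will fall out of enumerating which of $K_2, H, \hat H$ can have vertices in $S$ while the whole configuration still fits in $12+\ell$ vertices and covers $13$.

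Next I would run the standard averaging/sunflower-type argument. For each of the $\ge 19\delta n^\ell$ witnessing sets $S$, fix a sub-collection $S' \subseteq S$ of size $\ell' \le 19$ consisting exactly of the vertices of $S$ that lie in the chosen tiling (so the tiling restricted to $V(H_i)\cup V(H_j)\cup S'$ still covers $\ge 13$ vertices, since the discarded vertices of $S\setminus S'$ contributed nothing). Group the witnessing sets by their associated $S'$: there are at most $\sum_{\ell'\le 19}\binom{n}{\ell'} = O(n^{19})$ possible values of $S'$, but each fixed $S'$ of size $\ell'$ extends to at most $\binom{n}{\ell - \ell'} \le n^{\ell-\ell'}$ sets $S$ of size $\ell$. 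If every $S'$ had size exactly $\ell$ we would be done with $\ell \le 19$ immediately; otherwise some $S'$ with $\ell' < \ell$ occurs, and then $\{H_i,H_j\}$ is extendable with respect to $S'$ — but then, counting the sets of size $\ell$ in $\overline U$ containing a fixed extendable $S'$ of size $\ell'$, we get at least... hmm, this direction needs care. The cleaner route: since $\ell$ was chosen minimal, $\{H_i,H_j\}$ is not $\ell'$-extendable for any $\ell' < \ell$; so for each witnessing $S$, the set $S'$ of used vertices must have size exactly $\ell$ — otherwise $S'$ would be a witness of size $\ell' < \ell$, and I must check there are $\ge 19\delta n^{\ell'}$ such witnesses. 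This is where the counting has to be done: the number of distinct $S'$ arising is at most the number of witnessing $S$ divided by the max multiplicity, but I want a lower bound on how often a single small $S'$ recurs.

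The correct argument is: if for some witnessing $S$ the used subset $S'$ has size $\ell' < \ell$, then I claim $\{H_i,H_j\}$ is $\ell'$-extendable outright (so minimality of $\ell$ is contradicted unless $\ell' \ge $ something, and in any case $\ell' \le 19$). Indeed, a single such $S'$ of size $\ell' \le 19$ that witnesses extendability need only appear once to make the pair ``extendable with respect to $S'$''; but the definition of $\ell'$-extendable demands $\ge 19\delta n^{\ell'}$ such sets. So I instead argue contrapositively on the original $\ell$: among the $\ge 19\delta n^\ell$ witnesses $S$ of size $\ell$, if more than, say, $\delta n^{\ell}$ of them have $|S'| \le \ell - 1$, then pigeonholing over the $O(n^{\ell-1})$ choices of an $(\ell-1)$-subset contained in such an $S'$ yields some fixed $S'_0$ of size $\le \ell-1$ contained in $\ge \delta n^{\ell}/O(n^{\ell-1}) = \Omega(n)$ of the witnesses — far more than $19\delta n^{|S'_0|}$ once $n$ is large, wait that needs $|S'_0|$ fixed. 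The honest statement: pigeonhole gives an $(\ell-1)$-set $T$ lying in $\Omega(n)$ witnesses; the $\ell$-th vertices $x$ with $T\cup\{x\}$ a witness and $x$ not used give that $T$ alone is a size-$(\ell-1)$ witness recurring $\Omega(n) \ge 19\delta n^{1}\cdot\text{(stuff)}$... For $\ell - 1 \ge 1$ this shows $\{H_i,H_j\}$ is $(\ell-1)$-extendable when $\ell-1\ge 1$, and iterating, it is $\ell^\ast$-extendable for the minimal $\ell^\ast$ with the property that every witness uses all $\ell^\ast$ of its vertices; for that $\ell^\ast$, the used-subset bound $|S'| \le 19$ forces $\ell^\ast \le 19$. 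The main obstacle is precisely making this descent rigorous — bounding $|S\cap V(\mathcal{T})|$ by an absolute constant via the case analysis on tiling pieces, and then running the iterated pigeonhole cleanly so that the $\delta$'s and $n^{\ell}$'s line up; everything else is bookkeeping.
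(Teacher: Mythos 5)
Your overall plan — localize each large witness $S$ to a bounded-size subset, then count — is the right framework, and your observation that a fixed set $S'$ of size $\ell'$ is contained in at most $n^{\ell-\ell'}$ sets of size $\ell$ is exactly the counting estimate the argument needs. But both of the steps you flag as ``to be filled in'' have genuine gaps, and the way you are trying to fill them is not the right direction.

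First, the bound $|S \cap V(\mathcal{T})| \le 19$ does not follow from any case analysis on which pieces touch $S$. A $\{K_2,H,\hat H\}$-tiling of $G[V(H_i)\cup V(H_j)\cup S]$ covering at least $13$ vertices may happen to cover far more than $13$ — if $\ell$ is large and $G$ is dense, the tiling could contain dozens of pieces inside $S$ and use all of $S$. There is simply no a priori bound on the number of pieces or on $|S\cap V(\mathcal{T})|$. The correct way to get a constant bound is to pass to a \emph{minimal nonempty} $S'\subseteq S$ with respect to the witness property, and then argue by deletion: if $|S'|\ge 20$, minimality forces the tiling to cover all of $S'$ (otherwise drop an unused vertex), hence it covers at least $|S'|\ge 20$ vertices; deleting one piece that meets $S'$ loses at most $v(\hat H)=7$ vertices, so the remaining tiling still covers at least $|S'|-7\ge 13$ vertices while omitting some vertex of $S'$ — a strictly smaller witness, contradiction. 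This deletion trick is the content of the step; it is not ``bookkeeping'' and it is absent from your write-up.

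Second, your counting step gets tangled precisely because you are tracking subsets $S'$ of varying sizes and then trying to iterate a pigeonhole descent. This is unnecessary and, as you noticed yourself, does not close: a single small witness does not certify $\ell'$-extendability, and the descent you sketch does not line up the quantities. The clean fix is to \emph{enlarge} every minimal $S'$ to a set of size exactly $19$ inside $S$ (enlarging can only preserve the witness property). Now all the localized witnesses have the same size $19$, each lies in at most $n^{\ell-19}$ of the original $\ell$-sets, so there are at least $19\delta n^{\ell}/n^{\ell-19}=19\delta n^{19}$ of them, giving $19$-extendability and contradicting the minimality of $\ell\ge 20$ in one step — no iteration. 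So the missing ingredients are the deletion argument for the size bound and the uniform enlargement for the count; your ``case analysis on pieces'' and ``iterated pigeonhole'' are not routes to either.
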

\begin{proof}[Proof of Lemma~\ref{LEMMA:extendable-ell-upper-bound}]
    Let $\ell \ge 1$ be the smallest integer such that $\{H_i, H_j\}$ is $\ell$-extendable.
    Suppose to the contrary that $\ell \ge 20$. 
    By definition, there exists an $\ell$-graph $\mathcal{G} \subseteq \binom{\overline{U}}{\ell}$ of size at least $19 \delta n^{\ell}$ such that $\{H_i, H_j\}$ is $S$-extendable for every $S \in \mathcal{G}$. 
    
    Fix a set $S \in \mathcal{G}$. Let $S' \subseteq S$ be a nonempty set of smallest size such that $\{H_i, H_j\}$ is $S'$-extendable. 
    We claim that $|S'| \le 19$. 
    Indeed, suppose to the contrary that $|S'| \ge 20$.
    Let $\mathcal{H}'$ be a $\{K_2, H, \hat{H}\}$-tiling of $G[V(H_i) \cup V(H_j) \cup S']$ that covers at least $13$ vertices. 
    By the minimality of $S'$, we know that $\mathcal{H}'$ covers all vertices in $S'$.
    Removing an arbitrary member that have nonempty intersection with $S'$ from $\mathcal{H}'$, we obtain a new $\{K_2, H, \hat{H}\}$-tiling of $G[V(H_i) \cup V(H_j) \cup S']$ that covers at least $|S'| - 7 \ge 13$ vertices, which contradicts the minimality of $S'$. 
    Therefore, $|S'| \le 19$. 
    This shows that every set $S \in \mathcal{G}$ contains a subset $S'$ of size $19$ (we can enlarge $S'$ if it is less than $19$) such that $\{H_i, H_j\}$ is $S'$-extendable. 
    Let $\mathcal{G}'$ be the collection of such sets. Note that each set in $\mathcal{G}'$ is contained in at most $\binom{|\overline{U}|-19}{\ell-19} \le n^{\ell-19}$ sets in $\mathcal{G}$. Therefore, we have 
    \begin{align*}
        |\mathcal{G}'|
        \ge \frac{|\mathcal{G}|}{n^{\ell-19}}
        \ge \frac{19 \delta n^{\ell}}{n^{\ell-19}}
        \ge 19 \delta n^{19}, 
    \end{align*}
    which means that $\{H_i, H_j\}$ is $19$-extendable, contradicting the minimality of $\ell$. 
\end{proof}

\begin{lemma}\label{LEMMA:aux-graph-matching-number}
    We have $\nu(\mathcal{A}) \le \delta n$. 
\end{lemma}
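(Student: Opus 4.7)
The plan is to argue by contradiction: supposing $\nu(\mathcal{A}) > \delta n$, I will construct a $\{K_2, H, \hat{H}\}$-tiling of $G$ covering at least $6t + \delta n$ vertices, directly contradicting the standing hypothesis used in the proof of Proposition~\ref{PROP:H-K2-H-hat-tiling}. The scheme is to take a large matching in $\mathcal{A}$, extend each of its edges by a small disjoint set in $\overline{U}$ via the extendability property, and combine these local extensions with the untouched members of $\mathcal{H}$.

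Concretely, I would suppose $\nu(\mathcal{A}) \ge \lceil \delta n \rceil$ and fix a matching $\mathcal{M} = \{e_1, \ldots, e_m\}$ in $\mathcal{A}$ with $m = \lceil \delta n \rceil$. Writing $e_k = \{H_{i_k}, H_{j_k}\}$, Lemma~\ref{LEMMA:extendable-ell-upper-bound} provides an integer $\ell_k \le 19$ and a family $\mathcal{G}_k \subseteq \binom{\overline{U}}{\ell_k}$ of size at least $19 \delta n^{\ell_k}$ such that, for every $S \in \mathcal{G}_k$, the induced subgraph $G[V(H_{i_k}) \cup V(H_{j_k}) \cup S]$ admits a $\{K_2, H, \hat{H}\}$-tiling covering at least $13$ of its vertices.

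The next step is to greedily choose pairwise disjoint $S_k \in \mathcal{G}_k$ for $k = 1, \ldots, m$. At step $k$ the vertices already used lie in a subset of $\overline{U}$ of size at most $19(k-1) \le 19(\lceil \delta n \rceil - 1)$, so by a union bound the number of members of $\binom{\overline{U}}{\ell_k}$ meeting this subset is at most
\begin{align*}
    19(k-1) \binom{n-1}{\ell_k - 1} \le 19(\lceil \delta n \rceil - 1) n^{\ell_k - 1} < 19 \delta n^{\ell_k} \le |\mathcal{G}_k|,
\end{align*}
so an admissible $S_k$ always exists. Combining the $m$ local extensions with the $t - 2m$ copies of $H$ in $\mathcal{H}$ not covered by $\mathcal{M}$ then yields a vertex-disjoint $\{K_2, H, \hat{H}\}$-tiling of $G$ covering at least $6(t - 2m) + 13m = 6t + m \ge 6t + \delta n$ vertices. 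Disjointness is automatic since $\mathcal{M}$ is a matching in $\mathcal{A}$, the $S_k$'s are chosen disjoint, and each $S_k \subseteq \overline{U}$ avoids $U$; this contradicts the hypothesis that $G$ contains no such tiling, and hence $\nu(\mathcal{A}) \le \delta n$.

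The main obstacle will be the tightness of the arithmetic at the final greedy step: the multiplier $19$ in the definition of $\ell$-extendability and the uniform bound $\ell \le 19$ from Lemma~\ref{LEMMA:extendable-ell-upper-bound} are precisely calibrated (their ratio being exactly $1$) so that the strict inequality $19(\lceil \delta n \rceil - 1) n^{\ell - 1} < 19 \delta n^{\ell}$ survives for every $\ell$ arising in the argument; any loosening of either bound could allow the greedy procedure to stall before producing the required $\delta n$ additional covered vertices. Everything else (compatibility of the extensions with the untouched part of $\mathcal{H}$, the fact that $\overline{U}$ has linear size since $t \le \beta n < n/6$) follows routinely from the setup, so I do not expect any further substantive obstacle.
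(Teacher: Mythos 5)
Your proposal is correct and follows essentially the same route as the paper: contradict the standing assumption by relabeling a matching in $\mathcal{A}$ of size about $\delta n$, use Lemma~\ref{LEMMA:extendable-ell-upper-bound} to get extendability with $\ell \le 19$, find pairwise disjoint extension sets $S_k \subseteq \overline{U}$, and count $13\cdot\delta n + 6(t - 2\delta n) = 6t + \delta n$ covered vertices. The only cosmetic difference is that you inline the greedy disjoint-selection argument directly, whereas the paper packages it as an invocation of Proposition~\ref{PROP:matching-mixed-hypergraphs} (whose proof is itself the same greedy step); the arithmetic and the calibration of the constant $19$ against the bound $\ell \le 19$ are identical.
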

\begin{proof}[Proof of Lemma~\ref{LEMMA:aux-graph-matching-number}]
    Suppose to the contrary that $\nu(\mathcal{A}) \ge \delta n$. 
    By relabeling members of $\mathcal{H}$, we may assume that $\{H_{2i-1}, H_{2i}\} \in \mathcal{A}$ for $i \in [\delta n]$. 
    By the definition of $\mathcal{A}$ and Lemma~\ref{LEMMA:extendable-ell-upper-bound}, for each $i \in [\delta n]$, there exists an integer $r_i \le 19$ such that $\{H_{2i-1}, H_{2i}\}$ is $r_i$-extendable.
    In other words, for each $i \in [\delta n]$, there exists an $r_i$-graph $\mathcal{G}_i \subseteq \binom{\overline{U}}{r_i}$ of size at least $19 \delta n^{r_i}$ such that, for every $S \in \mathcal{G}_i$, the induced subgraph $G[V(H_{2i-1}) \cup V(H_{2i}) \cup S]$ contains a $\{K_2, H, \hat{H}\}$-tiling that covers at least $13$ vertices in $V(H_{2i-1}) \cup V(H_{2i}) \cup S$. 

    Since $|\mathcal{G}_i| \ge 19 \delta n^{r_i} = 19 \cdot \delta n \cdot n^{r_i-1}$ for every $i \in [\delta n]$, it follows from  Proposition~\ref{PROP:matching-mixed-hypergraphs} that there exists $S_i \in \mathcal{G}_i$ for every $i \in [\delta n]$ such that $S_1, \ldots, S_{\delta n}$ are pairwise disjoint. 
    For each $i \in [\delta n]$, let $\mathcal{H}_i$ be a $\{K_2, H, \hat{H}\}$-tiling in $G[V(H_{2i-1}) \cup V(H_{2i}) \cup S_i]$ that covers at least $13$ vertices of $V(H_{2i-1}) \cup V(H_{2i}) \cup S_i$. 
    Then, it is easy to see that $\mathcal{H}_1 \cup \cdots \cup \mathcal{H}_{\delta n} \cup \{H_{2\delta n+1}, \ldots, H_t\}$ is a $\{K_2, H, \hat{H}\}$-tiling in $G$ that covers at least 
    \begin{align*}
        13 \cdot \delta n + 6 \cdot (t - 2\delta n)
        = 6 t + \delta n
    \end{align*}
    vertices, which contradicts our assumption. 
    This completes the proof of Lemma~\ref{LEMMA:aux-graph-matching-number}. 
\end{proof}

Note from Lemma~\ref{LEMMA:aux-graph-matching-number} that $|\mathcal{H}'| = |\mathcal{H}| - 2|\mathcal{M}| \ge (1-2\delta)t$. 

%
\begin{lemma}\label{LEMMA:H-R-at-most-3-edges}
    The following statements hold for every $H_i \in \mathcal{H}'$. 
    \begin{enumerate}[label=(\roman*)]
        \item\label{LEMMA:H-R-at-most-3-edges-1} Either $\{a_i, b_i\} \cap L = \emptyset$ or $\{c_i, d_i\} \cap L = \emptyset$.
        \item\label{LEMMA:H-R-at-most-3-edges-2} Either $\{a_i, b_i\} \cap L = \emptyset$ or $u_i \not\in L$. 
        \item\label{LEMMA:H-R-at-most-3-edges-3} Either $\{c_i, d_i\} \cap L = \emptyset$ or $v_i \not\in L$. 
    \end{enumerate}
    In particular, only one of the following three cases can occur: 
    \begin{align*}
        |V(H_i) \cap L| \le 2,\quad
        V(H_i) \cap L = \{u_i, c_i, d_i\},\quad\text{or}\quad  
        V(H_i) \cap L = \{v_i, a_i, b_i\}. 
    \end{align*}
\end{lemma}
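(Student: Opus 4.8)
The whole point is that if too many vertices of a single $H_i \in \mathcal{H}'$ have large external neighbourhoods (into $\overline U$), then we can find many disjoint small sets $S \subseteq \overline U$ witnessing that $\{H_i, H_j\}$ is extendable for some suitable $H_j$, contradicting the fact that no pair of $\mathcal{H}'$ is $\ell$-extendable. Actually, even simpler: we should argue that a single $H_i$ alone, together with a small set $S \subseteq \overline U$, already gives a $\{K_2, H, \hat H\}$-tiling of $G[V(H_i) \cup S]$ covering $\geq 7$ vertices using the structure of $\hat H$ (or covering the $6$ vertices of $H_i$ plus extra), which would let us strictly improve the global tiling. So the strategy is: assume one of (i)--(iii) fails for some $H_i \in \mathcal{H}'$, and derive a local configuration on $V(H_i) \cup S$ for a bounded-size $S \subseteq \overline U$ that forms (a blowup-respecting copy of) $\hat H$ or a disjoint union $H \cup K_2$ or $H \cup H$-type gadget covering at least $7$ of those vertices. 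Since $L$-vertices each have $\geq 19\delta^{1/6} n$ neighbours in $\overline U$, a counting argument (as in Lemma~\ref{LEMMA:extendable-ell-upper-bound} and Proposition~\ref{PROP:matching-mixed-hypergraphs}) shows these local configurations exist on $\Omega(n^\ell)$ disjoint sets $S$, making $H_i$ self-extendable in a sense that directly contradicts $\nu(H,G) = t$ being maximum — or more precisely contradicts $G$ having no $\{K_2,H,\hat H\}$-tiling covering $6t + \delta n$ vertices.

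Concretely, here is what I would check for each clause. For \ref{LEMMA:H-R-at-most-3-edges-1}: suppose $a_i \in L$ (or $b_i \in L$) and $c_i \in L$ (or $d_i \in L$). Pick $x \in N(a_i) \cap \overline U$ and $y \in N(c_i) \cap \overline U$ with $x \neq y$; then $V(H_i) \cup \{x, y\}$ contains $\hat H$: map $\hat u \mapsto u_i$, $\hat v \mapsto v_i$, $\hat a \mapsto b_i$, $\hat b \mapsto a_i$, $\hat c \mapsto c_i$, $\hat d \mapsto d_i$, $\hat w \mapsto$ (need a common neighbour of $a_i$-image and $c_i$-image). Hmm — $\hat H$ needs $\hat w$ adjacent to both $\hat b$ and $\hat c$; a single external vertex $x$ adjacent to both $a_i$ and $c_i$ would do, but we only know $x \sim a_i$ and $y \sim c_i$ separately. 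So instead the right gadget is: use $\{x\}$ with edge $x a_i$ to build an $H$ on $\{u_i, v_i, b_i, a_i, x, ?\}$ — no, that already needs another vertex. The cleaner route: $H_i$ plus one external edge $x a_i$ yields a copy of $\hat H$? Count vertices: $\hat H$ has $7$ vertices, $V(H_i) \cup \{x\}$ has $7$. We need edges $\{\hat u\hat v, \hat u\hat a, \hat u\hat b, \hat v\hat c, \hat v\hat d, \hat w\hat b, \hat w\hat c\}$. Take $\hat u = u_i, \hat v = v_i, \hat a = a_i, \hat b = b_i, \hat c = c_i, \hat d = d_i$: we have $\hat u\hat v, \hat u\hat a, \hat u\hat b, \hat v\hat c, \hat v\hat d$ from $H_i$, and we need $\hat w \hat b = \hat w b_i$ and $\hat w\hat c = \hat w c_i$. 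So $\hat w$ must be an external vertex adjacent to \emph{both} $b_i$ and $c_i$ — again not guaranteed. So a plain $\hat H$ is not what one external edge buys. Rather, what one external edge off an $L$-vertex buys is: reorganize $V(H_i) \cup \{x\}$ into $H \cup K_1$ is useless; into a single $H$ plus a spare is no gain. The actual gain must come from \emph{two} external vertices giving $H \sqcup K_2$ (covering $6 + 2 = 8 > 6$) — i.e. we re-embed $H$ on $5$ of the $7$ vertices of $V(H_i) \cup \{x, y\}$ and keep $K_2$ on two others, OR from a single $\hat H$ on $V(H_i) \cup \{x\}$ when $x$ happens to be adjacent to the two "right" vertices. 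This is exactly why the three forbidden configurations are specifically about which pairs of $L$-vertices coexist: those are the pairs for which a suitable re-embedding + leftover $K_2$ exists.

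Therefore the heart of the proof is a short finite case analysis producing, for each forbidden coexistence pattern, an explicit $\{K_2, H, \hat H\}$-cover of $V(H_i) \cup S$ ($|S| \leq$ a small constant) of size $\geq 7$; then one runs the now-familiar machinery: each $L$-vertex contributes $\geq 19\delta^{1/6}n$ choices of external neighbour, so there are $\gg \delta n^{|S|}$ witnessing sets $S$, and since $|\overline U| \leq n$ we can (via Proposition~\ref{PROP:matching-mixed-hypergraphs}, ranging over the $\geq (1-2\delta)t \geq \delta n$ bad indices if there were that many, or just over a single bad $H_i$ by a direct swap) assemble a global $\{K_2, H, \hat H\}$-tiling covering $\geq 6t + \delta n$ vertices — contradiction. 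Clauses \ref{LEMMA:H-R-at-most-3-edges-2} and \ref{LEMMA:H-R-at-most-3-edges-3} are symmetric to each other (swap the roles of the two "sides" of $H$) and are handled by the analogous re-embedding when $u_i$ and one of $a_i, b_i$ lie in $L$. The final "in particular" clause is pure propositional bookkeeping: from (i), not both sides meet $L$; combining with (ii) and (iii), if some vertex of the side-pair is in $L$ then the corresponding centre is not, and the opposite side-pair is disjoint from $L$, which forces $V(H_i) \cap L \subseteq \{u_i\} \cup \{c_i, d_i\}$ or $\subseteq \{v_i\} \cup \{a_i, b_i\}$ or has size $\leq 2$; a tiny check rules out intermediate mixed patterns of size $3$ other than the two listed.

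The main obstacle I anticipate is purely the bookkeeping of the embedding case analysis: one must verify, for each of the (few) critical $L$-patterns, that $V(H_i)$ together with one or two generic external neighbours genuinely admits a $\{K_2, H, \hat H\}$-tiling of size $\geq 7$ — the subtlety being that generic external vertices are adjacent only to the specific $L$-vertices we chose, not to anything else in $H_i$, so the re-embedding must route the new edges through exactly those attachment points. This is finite but fiddly, and getting the orientation of $H$ (which vertex is the centre $u$ vs.\ $v$, which leaves are $\{a,b\}$ vs.\ $\{c,d\}$) right in each sub-case is where errors would creep in; the $\delta^{1/6}$ versus $\delta$ thresholds are chosen precisely so that the subsequent counting never runs out of room regardless of which constant-size $S$ each case needs.
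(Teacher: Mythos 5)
Your high-level strategy is the same as the paper's: assume one of (i)--(iii) fails, use the $\geq 19\delta^{1/6}n$ external neighbours of each $L$-vertex to pick $w_1, w_2 \in \overline{U}$, re-embed $V(H_i) \cup \{w_1,w_2\}$ into a $\{K_2, H\}$-tiling covering all $8$ vertices, and append an arbitrary $H_j \in \mathcal{H}' \setminus \{H_i\}$ to exhibit a $\{K_2,H\}$-tiling of $V(H_i) \cup V(H_j) \cup \{w_1,w_2\}$ covering $\geq 13$ vertices. Since this works for $\geq 19\delta n^2$ sets $\{w_1,w_2\}$, the pair $\{H_i,H_j\}$ is $2$-extendable, contradicting that no pair in $\mathcal{H}'$ is extendable (by maximality of the matching $\mathcal{M}$). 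You correctly work out that $\hat H$ alone does not help here because $\hat w$ would need a common neighbour of two attachment points. The ``in particular'' bookkeeping is also handled correctly.

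However, there is a concrete gap in the part you flag as ``fiddly'': you convince yourself that $H \sqcup K_2$ is \emph{the} gadget, and this is in fact false for case~(i). When $b_i, c_i \in L$ and $w_1 \in N(b_i, \overline U)$, $w_2 \in N(c_i, \overline U)$, the only edges you can rely on are $E(H_i) \cup \{w_1 b_i, w_2 c_i\}$, and a short check shows no choice of $K_2$ among these edges leaves an $H$ on the remaining six vertices (the degrees never match $H$'s, which needs two adjacent degree-$3$ vertices). The correct gadget for (i) is the perfect matching $\{w_1 b_i\}, \{w_2 c_i\}, \{u_i a_i\}, \{v_i d_i\}$, i.e. four $K_2$'s; $H \sqcup K_2$ is what works in (ii)--(iii) (e.g.\ for $u_i, a_i \in L$, take $\{w_1 a_i\}$ together with the $H$ on $\{u_i, v_i, w_2, b_i, c_i, d_i\}$ with centres $u_i, v_i$). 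Since you anticipated an $H\sqcup K_2$ construction uniformly, you would have been stuck on (i); a reader cannot verify correctness without you writing out the actual tilings, which is where the content of this lemma lives. One further (minor) point: the contradiction here is purely local -- a single extendable pair in $\mathcal{H}'$ violates the maximality of $\mathcal{M}$ -- so no global reassembly into a $(6t+\delta n)$-covering tiling is needed at this stage; that global step appears only in the proof of Lemma~\ref{LEMMA:aux-graph-matching-number}.
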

\begin{proof}[Proof of Lemma~\ref{LEMMA:H-R-at-most-3-edges}]
    Let us first prove~\ref{LEMMA:H-R-at-most-3-edges-1}. 
    Suppose to the contrary that~\ref{LEMMA:H-R-at-most-3-edges-1} fails. 
    By symmetry, we may assume that $b_i \in L$ and $c_i \in L$.  
    Fix an arbitrary pair of distinct vertices $(w_1, w_2) \in N_{G}(b_i, \overline{U}) \times N_{G}(c_i, \overline{U})$. 
    Note from the definition of $L$ that the number of such (unordered) pairs $\{w_1, w_2\}$ is at least 
    \begin{align}\label{equ:w1w2-choices}
        \frac{1}{2} \cdot 19 \delta^{1/6} n \cdot (19 \delta^{1/6} n -1) 
        \ge 19 \delta n^2. 
    \end{align}
    Let $H_j \in \mathcal{H}' \setminus \{H_i\}$ be an arbitrary member. 
    Note that $G[V(H_i) \cup \{w_1, w_2\}]$ contains a $K_2$-tiling of size four $\{w_1, b_i\}$, $\{w_2, c_i\}$, $\{u_i, a_i\}$, $\{v_i, d_i\}$. This together with $H_j$ forms a $\{K_2, H\}$-tiling in $G[V(H_i) \cup V(H_j) \cup \{w_1, w_2\}]$ that covers $2\cdot 4 + 6 = 14 \ge 13$ vertices. Thus, $\{H_i, H_j\}$ is $\{w_1, w_2\}$-extendable. By~\eqref{equ:w1w2-choices}, this means that $\{H_i, H_j\}$ is $2$-extendable, which is a contradiction. 

    Next, we prove~\ref{LEMMA:H-R-at-most-3-edges-2}.
    Suppose to the contrary that~\ref{LEMMA:H-R-at-most-3-edges-2} fails. 
    By symmetry, we may assume that $u_i \in L$ and $a_i \in L$. 
    Fix an arbitrary pair of distinct vertices $(w_1, w_2) \in N_{G}(b_i, \overline{U}) \times N_{G}(c_i, \overline{U})$. 
    Let $H_j \in \mathcal{H}' \setminus \{H_i\}$ be an arbitrary member. 
    Note that $G[V(H_i) \cup \{w_1, w_2\}]$ contains a perfect $\{K_2,H\}$-tiling: $\{w_1, a_i\}$ and $\{u_iv_i, u_iw_2, u_ib_i, v_ic_i, v_id_i\}$. This together with $H_j$ forms a $\{K_2, H\}$-tiling in $G[V(H_i) \cup V(H_j) \cup \{w_1, w_2\}]$ that covers $7 + 6 = 13$ vertices. Thus, $\{H_i, H_j\}$ is $\{w_1, w_2\}$-extendable. 
    Similarly,  this means that $\{H_i, H_j\}$ is $2$-extendable, which is a contradiction. 
    
    By symmetry,~\ref{LEMMA:H-R-at-most-3-edges-3} holds as well.
    This completes the proof of Lemma~\ref{LEMMA:H-R-at-most-3-edges}. 
\end{proof}

By Lemma~\ref{LEMMA:H-R-at-most-3-edges}~\ref{LEMMA:H-R-at-most-3-edges-1}, we can relabel each $H_i \in \mathcal{H}'$ such that the following always holds:  
\begin{align}\label{equ:assum-ci-di-not-in-L}
    \{c_i, d_i\} \cap L = \emptyset. 
\end{align}

For each $\ell \in \mathbb{N}$, let 
\begin{align*}
    \mathcal{H}_{\ell} 
    \coloneqq \left\{H_i \in \mathcal{H}' \colon |V(H_i) \cap L| = \ell \right\}
    \quad\text{and}\quad 
    V_{\ell}
    \coloneqq V(\mathcal{H}_{\ell})
    \coloneqq \bigcup_{H_i \in \mathcal{H}_{\ell}}V(H_i). 
\end{align*}
It follows from Lemma~\ref{LEMMA:H-R-at-most-3-edges} that $\mathcal{H}_{\ell} = \emptyset$ for every $\ell \ge 4$, and hence, $\mathcal{H}_0 \cup \mathcal{H}_1 \cup \mathcal{H}_2 \cup \mathcal{H}_3$ is a partition of $\mathcal{H}'$. 
Let $y_i \coloneqq {|\mathcal{H}_i|}/{n}$ for $i \in [0,3]$. 
Note that 
\begin{align*}
    y_0 + y_1 + y_2 + y_3 
    = \frac{|\mathcal{H}'|}{n}
    \le \frac{|\mathcal{H}|}{n}
    \le \frac{t}{n}. 
\end{align*}

For convenience, for $(i, j) \in [0,3] \times [0,3]$, we will use $e(\mathcal{H}_i, \mathcal{H}_j)$ to denote the number of edges in the induced subgraph $G[V_i, V_j]$, with $G[V_i, V_i] \coloneqq G[V_i]$. 
The following lemma, whose proof is deferred to Section~\ref{SEC:proof-LEMMA-Hi-Hj-U-bar-edges}, provides an upper bound for the number of edges inside $G[V_0 \cup \cdots \cup V_3]$. 
\begin{lemma}\label{LEMMA:Hi-Hj-U-bar-edges}
    We have 
    \begin{align*}
        \frac{1}{n^2} \sum_{0 \le i \le j \le 3} e(\mathcal{H}_i, \mathcal{H}_j)  
        & \le 18 y_0^2 + 12 y_1^2 + 12y_2^2 + 9y_3^2  + 30 y_0 y_1 + 24y_0y_2 + 24y_0y_3 \\ 
        & \quad + 24y_1y_2 + 24y_1y_3 + 21 y_2 y_3 + \frac{3y_1}{n} + \frac{3y_2}{n} + \frac{6y_3}{n}.  
    \end{align*}
\end{lemma}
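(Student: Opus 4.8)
The plan is to reduce Lemma~\ref{LEMMA:Hi-Hj-U-bar-edges} to a family of edge-count bounds for pairs (and single members) of $\mathcal{H}'$, and then establish those bounds by exploiting non-extendability. Concretely, for distinct $H_i, H_j \in \mathcal{H}'$ with $|V(H_i) \cap L| = \ell_1 \le \ell_2 = |V(H_j) \cap L|$, I would prove
\begin{align*}
    e_G(V(H_i), V(H_j)) \le c_{\ell_1, \ell_2},
\end{align*}
where $c_{0,0} = 36$, $c_{0,1} = 30$, $c_{2,3} = 21$, $c_{3,3} = 18$, and $c_{\ell_1, \ell_2} = 24$ in every remaining case, together with the trivial bound $e_G(V(H_i)) \le \binom{6}{2} = 15$. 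Granting these, the lemma follows by a routine computation: bounding $e_G(V_\ell) \le 15|\mathcal{H}_\ell| + c_{\ell,\ell}\binom{|\mathcal{H}_\ell|}{2}$ and $e_G(V_{\ell_1}, V_{\ell_2}) \le c_{\ell_1,\ell_2}|\mathcal{H}_{\ell_1}||\mathcal{H}_{\ell_2}|$, summing over all classes, and writing $y_\ell = |\mathcal{H}_\ell|/n$, one recovers exactly the right-hand side of the lemma: the coefficient of $y_\ell^2$ becomes $\tfrac12 c_{\ell,\ell}$ and that of $y_{\ell_1}y_{\ell_2}$ becomes $c_{\ell_1,\ell_2}$, which is why the $c$'s are forced to match the quadratic part of $\Psi_\alpha$, while the linear terms $\tfrac{3y_1 + 3y_2 + 6y_3}{n}$ come from the residual $\bigl(15 - \tfrac12 c_{\ell,\ell}\bigr)|\mathcal{H}_\ell|$ (for $\ell = 0$ this residual is negative and is simply discarded).

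To prove the per-pair bounds I would argue by contradiction. Fix $H_i, H_j \in \mathcal{H}'$ of the relevant types and suppose $e_G(V(H_i), V(H_j)) > c_{\ell_1, \ell_2}$. Since $\{H_i, H_j\}$ is not $\ell$-extendable for any $\ell \ge 1$ (by the maximality of $\mathcal{M}$), it suffices to exhibit, for some $\ell \ge 1$, at least $19\delta n^\ell$ sets $S \in \binom{\overline{U}}{\ell}$ such that $G[V(H_i) \cup V(H_j) \cup S]$ contains a $\{K_2, H, \hat{H}\}$-tiling covering at least $13$ vertices. Such sets will come from a \emph{template}: a fixed decomposition of $V(H_i) \cup V(H_j)$ into copies of $K_2$ and $H$ together with $\ell$ unfilled slots (here $\ell$ will be a small absolute constant, never exceeding $v(H) = 6$), each slot being a vertex required to lie in $\overline{U}$ and to be adjacent to a prescribed vertex of $L \cap (V(H_i) \cup V(H_j))$; since every vertex of $L$ has at least $19\delta^{1/6} n$ neighbours in $\overline{U}$, the number of admissible fillings is at least $(19\delta^{1/6} n - O(1))^\ell$, which exceeds $19\delta n^\ell$ for every $\ell \le 6$ (the exponent $1/6$ in the definition of $L$ is chosen precisely for this). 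The tilings produced are typically ``one $H$ and one $\hat{H}$'' covering exactly $13$ vertices, or ``two $H$'s and a $K_2$'' covering $14$; building the template amounts to checking that the hypothesised density of the bipartite graph $G[V(H_i), V(H_j)]$, together with the fixed edges of $H_i$, $H_j$ and the fact that $\hat{H}$ is obtained from $H$ by adding a single degree-two vertex, forces the required copies of $H$ and $\hat{H}$ to exist.

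The main work, and the main obstacle, is carrying out this template construction for each type-pair: the constants $c_{\ell_1,\ell_2}$ are sharp, so the analysis has to be tight, with no slack to absorb errors. Two structural facts keep it in hand. First, a member of $\mathcal{H}_0$ has no $L$-vertex, so one cannot reach $\overline{U}$ through it at all; this is exactly why $c_{0,0} = 36$ imposes no constraint. Second, after the relabelling in~\eqref{equ:assum-ci-di-not-in-L} and Lemma~\ref{LEMMA:H-R-at-most-3-edges}, a member $H_i \in \mathcal{H}_3$ has $V(H_i) \cap L = \{v_i, a_i, b_i\}$, one colour class of the bipartition of $H$, while its non-$L$-vertices $\{u_i, c_i, d_i\}$ form the other class, which is an independent set of $H_i$; this rigidity pins down which edges across $V(H_i)$ and $V(H_j)$ may be present and is what produces the smaller constants $21$ and $18$. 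I expect $\{3,3\}$ to be the hardest sub-case — the tightest bound, with templates that appear to need several slots and a split according to whether $G$ restricted to the non-$L$-parts $\{u_i,c_i,d_i\}$, $\{u_j,c_j,d_j\}$ contains an edge — with $\{0,1\}$ next, where one must show that the single $L$-vertex of $H_j$ has no neighbour in $V(H_i)$ once $e_G(V(H_i),V(H_j)) > 30$.
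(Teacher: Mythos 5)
Your reduction is exactly the paper's: you isolate the same per-pair bounds $c_{0,0}=36$, $c_{0,1}=30$, $c_{2,3}=21$, $c_{3,3}=18$, and $c_{\ell_1,\ell_2}=24$ in the remaining cases (these are precisely Lemmas~\ref{LEMMA:H0-H1-upper-bound}, \ref{LEMMA:H1-H1-upper-bound}, \ref{LEMMA:H2-Hi-upper-bound}, \ref{LEMMA:H2-H3-upper-bound}, \ref{LEMMA:H3-upper-bound}), you assemble them with the identical $c_{\ell,\ell}\binom{y_\ell n}{2}+\binom{6}{2}y_\ell n$ bookkeeping that produces the linear residuals $3y_1n, 3y_2n, 6y_3n$, and you propose to prove the per-pair bounds by the same mechanism the paper uses: exhibit a $\{K_2,H,\hat{H}\}$-tiling covering $\ge 13$ vertices of $V(H_i)\cup V(H_j)\cup S$ for $\ge 19\delta n^{\ell}$ choices of a template $S\subseteq\overline{U}$ with $\ell\le 6$ slots anchored at $L$-vertices, contradicting non-extendability, with $\delta^{1/6}$ tuned so the counting closes. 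The only thing not carried out is the case-by-case construction of the templates (the content of Section~\ref{SEC:proof-LEMMA-Hi-Hj-U-bar-edges}), but your structural reading of $\mathcal{H}_3$ and of the role of $\delta^{1/6}$ matches the paper's organising principles.
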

Lemma~\ref{LEMMA:Hi-Hj-U-bar-edges} follows from the trivial upper bound 
    \begin{align*}
        e(\mathcal{H}_0) \le 36\binom{y_0 n}{2} + \binom{6}{2}y_0 n 
        \le 18 y_0^2 n^2, 
    \end{align*}
as well as Lemmas~\ref{LEMMA:H0-H1-upper-bound},~\ref{LEMMA:H1-H1-upper-bound},~\ref{LEMMA:H2-Hi-upper-bound}, and~\ref{LEMMA:H3-upper-bound}, whose statements and proofs are deferred to  Section~\ref{SEC:proof-LEMMA-Hi-Hj-U-bar-edges} for technical reasons. 

Let us now present the proof of Proposition~\ref{PROP:H-K2-H-hat-tiling}. 
\begin{proof}[Proof of Proposition~\ref{PROP:H-K2-H-hat-tiling}]   
    Let $\alpha \coloneqq t/n$ and recall that $y_i \coloneqq {|\mathcal{H}_i|}/{n}$ for $i \in [0,3]$. 
    Note that, by the definition of $\mathcal{H}_0, \ldots, \mathcal{H}_3$, we have 
    \begin{align*}
        \frac{1}{n^2} \sum_{i=0}^{3} e(\mathcal{H}_i, \overline{U}) 
        & \le (y_1 + 2y_2 + 3y_3)(1-6\alpha) + 6(y_0 + y_1 + y_2 + y_3) \cdot 19 \delta^{1/6} \\
        & \le (y_1 + 2y_2 + 3y_3)(1-6\alpha) +  114 \alpha \delta^{1/6}. 
    \end{align*}
    Combining this with Lemmas~\ref{LEMMA:aux-graph-matching-number} and~\ref{LEMMA:Hi-Hj-U-bar-edges}, we obtain 
    \begin{align*}
        \frac{|G|}{n^2}
        & = \frac{1}{n^2} \left(\sum_{0 \le i \le 3} e(\mathcal{H}_i, \overline{U})  + \sum_{0\le i \le j \le 3} e(\mathcal{H}_i, \mathcal{H}_j) + e(\mathcal{H} \setminus \mathcal{H}', V) + |G[\overline{U}]| \right)  \\ 
        & \le (y_1 + 2y_2 + 3y_3)(1-6\alpha) +  114 \alpha \delta^{1/6} +  18 y_0^2 + 12 y_1^2 + 12y_2^2 + 9y_3^2 + 30 y_0 y_1  \\ 
        & \quad  + 24y_0y_2 + 24y_0y_3 + 24y_1y_2 + 24y_1y_3 + 21 y_2 y_3 + \frac{3y_1}{n} + \frac{3y_2}{n} + \frac{6y_3}{n} + 12 \delta + \frac{5}{n}\\
        & \le \Phi_{\alpha}(y_0, y_1, y_2, y_3) + 200 \delta^{1/6}, 
    \end{align*}
    where recall that 
    \begin{align*}
        \Phi_{\alpha}(y_0, y_1, y_2, y_3)
        &\coloneqq (y_1 + 2y_2 + 3y_3)(1-6\alpha) +   18 y_0^2 + 12 y_1^2 + 12y_2^2 + 9y_3^2  \\ 
        & \quad + 30 y_0 y_1 + 24y_0y_2 + 24y_0y_3 + 24y_1y_2 + 24y_1y_3 + 21 y_2 y_3.
    \end{align*}
    Note that $(y_0, y_1, y_2, y_3) \in \Delta_{\alpha}$ and $\alpha \le \beta$. So it follows from Proposition~\ref{PROP:optimization-Psi} and the monotonicity of $\Xi(\cdot)$ that $\Phi(y_0, y_1, y_2, y_3) \le \Xi(\alpha) \le \Xi(\beta)$. 
    Therefore, 
    \begin{align*}
        \frac{|G|}{n^2} 
        \le \Xi(\beta) + 200 \delta^{1/6}, 
    \end{align*}
    which proves Proposition~\ref{PROP:H-K2-H-hat-tiling}. 
\end{proof}

\section{Local estimation}\label{SEC:proof-LEMMA-Hi-Hj-U-bar-edges}
In this section, we present the proof of Lemma~\ref{LEMMA:Hi-Hj-U-bar-edges}, continuing with the notations introduced in the previous section.
The proofs in this section, while elementary, are quite tedious due to extensive case analyses. To enhance clarity, we include numerous figures to aid in understanding the arguments.

For convenience, for a pair $\{H_i, H_j\}$ in $\mathcal{H}'$, we use $G[H_i, H_j]$ to denote the induced bipartite graph $G[V(H_i), V(H_j)]$, and use $e(H_i, H_j)$ to denote the number of edges in $G[V(H_i), V(H_j)]$. 

\subsection{Upper bound for $e(\mathcal{H}_0, \mathcal{H}_1)$}\label{SUBSEC:H0-H1-upper-bound}
\begin{lemma}\label{LEMMA:H0-H1-upper-bound}
    Suppose that $\{H_i, H_j\} \subseteq \mathcal{H}'$ is a pair such that $|V(H_i) \cap L| \ge 1$. 
    Then $e(H_i, H_j) \le 30$. 
    In particular, $e(\mathcal{H}_0, \mathcal{H}_1) \le 30 y_0 y_1 n^2$. 
\end{lemma}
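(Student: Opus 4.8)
The plan is to prove the per-pair bound $e(H_i,H_j)\le 30$ and then sum over all pairs in $\mathcal{H}_0\times\mathcal{H}_1$. Since each $H_i$ has $6$ vertices, the complete bipartite graph on $V(H_i)\cup V(H_j)$ has $36$ edges; the claim is that whenever at least one of the two copies contains a vertex of $L$ (i.e.\ a vertex with many neighbours outside $U$), the bipartite graph $G[V(H_i),V(H_j)]$ misses at least $6$ of these $36$ edges. The summation step is then routine: $|\mathcal{H}_0|=y_0 n$, $|\mathcal{H}_1|=y_1 n$, and $\sum_{H_i\in\mathcal{H}_0, H_j\in\mathcal{H}_1} e(H_i,H_j) = e(\mathcal{H}_0,\mathcal{H}_1)$, giving $e(\mathcal{H}_0,\mathcal{H}_1)\le 30\, y_0 y_1 n^2$.

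For the main bound I would argue by contradiction: suppose $e(H_i,H_j)\ge 31$, so $G[V(H_i),V(H_j)]$ is the complete bipartite graph $K_{6,6}$ minus at most $5$ edges. WLOG say $v\in V(H_j)$ lies in $L$ (the roles of $i,j$ being symmetric for the existence of an $L$-vertex; one should keep in mind the relabelling convention~\eqref{equ:assum-ci-di-not-in-L} so that the $L$-vertex of $H_j$, if it is a leaf on one side, is an endpoint $a_j$ or $b_j$ — but actually any vertex of $H_j$ works). The key structural fact I want is: with $G[V(H_i),V(H_j)]$ so dense, one can find \emph{two} vertex-disjoint copies of $H$ inside $G[V(H_i)\cup V(H_j)]$, or a copy of $H$ using $11$ of the $12$ vertices, leaving one vertex of $L$ free — and since that vertex has $\ge 19\delta^{1/6}n$ neighbours in $\overline{U}$, one extra edge to $\overline{U}$ upgrades the tiling. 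More precisely, the recurring mechanism used throughout Section~\ref{SEC:Proof-K2-H-hatH-tiling} is: if $w\in N_G(v,\overline{U})$ with $v\in V(H_j)\cap L$, then in $G[V(H_i)\cup V(H_j)\cup\{w\}]$ one finds a $\{K_2,H,\hat H\}$-tiling covering $\ge 13$ of the $13$ vertices (e.g.\ a copy of $H$ on $6$ vertices, a copy of $\hat H$ on $7$ vertices using $w$, or a perfect $\{K_2,H\}$-tiling); since there are $\ge 19\delta^{1/6}n \ge 19\delta n^{1}$ choices of $w$, the pair $\{H_i,H_j\}$ is $1$-extendable, contradicting $H_i,H_j\in\mathcal{H}'$.

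So the technical heart is a small finite case check: show that $K_{6,6}$ minus $\le 5$ edges, with a distinguished vertex $\hat v$ on one side that must end up uncovered (or covered only by a $\hat H$ through $\overline U$), always admits the required near-perfect $H$- or $\{K_2,H,\hat H\}$-structure. I would organize this by noting that $H$ embeds into $K_{3,3}$ (Proposition~\ref{PROP:K2-H-hat-decomposition}\ref{PROP:K2-H-hat-decomposition-1} already uses $H\subseteq K_{3,3}$), so any $K_{3,3}\subseteq G[V(H_i),V(H_j)]$ yields a copy of $H$; with only $5$ missing edges out of $36$, the bipartite graph on the remaining $6$ vertices after deleting one $3{+}3$ set is again dense enough to contain $K_{3,3}$ minus few edges, hence $H$ — or, failing that, one routes through $\hat H$ or $\{K_2,H\}$ exactly as in Lemma~\ref{LEMMA:H-R-at-most-3-edges}. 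A convenient clean way: pick the distinguished $L$-vertex $\hat v$, pick $w\in N_G(\hat v,\overline U)$; in $K_{6,6}$ minus $5$ edges one easily finds a copy of $\hat H$ on $7$ vertices (using $\hat v$ and $w$ and $5$ more from $V(H_i)\cup V(H_j)$) together with a copy of $H$ on the remaining $6$ vertices of $V(H_i)\cup V(H_j)$, covering $7+6=13$ vertices, which is the $\{H_i,H_j\}$ is-$1$-extendable contradiction.

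The main obstacle I anticipate is making the finite check genuinely airtight rather than ``clearly $K_{6,6}$ minus $5$ edges is flexible enough'': one must be careful that the $\le 5$ missing edges could conspire to isolate a small set, and that the relabelling freedom from Lemma~\ref{LEMMA:H-R-at-most-3-edges}\ref{LEMMA:H-R-at-most-3-edges-1} and~\eqref{equ:assum-ci-di-not-in-L} is not secretly needed here (for $\mathcal{H}_0$--$\mathcal{H}_1$ pairs only $H_j\in\mathcal{H}_1$ carries the $L$-vertex, but it could a priori be $u_j$, $v_j$, $a_j$ or $b_j$). I would handle this by a short case split on which vertex of $H_j$ is in $L$, but I expect all cases to collapse to the same ``find $\hat H$ plus $H$'' argument, since an extra vertex $w\in\overline U$ adjacent to \emph{any} vertex of $H_j$ gives enough room. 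If a direct embedding is awkward, the fallback is to count: $K_{6,6}$ minus $\le 5$ edges has minimum degree $\ge 1$ on each side after deleting at most $3{+}3$ vertices in a balanced way, and a greedy embedding of the $6$-vertex tree $H$ (respectively the $7$-vertex tree $\hat H$) into a bipartite graph succeeds as long as degrees stay above the (small) number of already-embedded neighbours, which is easily met.
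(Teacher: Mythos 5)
The framework you set up is right: suppose $e(H_i,H_j)\ge 31$, take a vertex $x\in V(H_i)\cap L$, fix $w\in N_G(x,\overline U)$ (there are $\ge 19\delta^{1/6}n\ge 19\delta n$ choices), and exhibit a $\{K_2,H,\hat H\}$-tiling of $G[V(H_i)\cup V(H_j)\cup\{w\}]$ covering all $13$ vertices to get $1$-extendability and a contradiction; the summation to $e(\mathcal{H}_0,\mathcal{H}_1)\le 30y_0y_1n^2$ is then immediate. You have also correctly observed that, because $13$ is odd while $|K_2|$ and $|H|$ are even, such a tiling must contain exactly one copy of $\hat H$, so the target decomposition is $\hat H + H$ or $\hat H + 3K_2$. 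This is indeed the shape of the argument the paper uses.

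However, the technical heart — actually producing such a decomposition of a $K_{6,6}$ minus $\le 5$ edges augmented by the internal $H$-structure of $H_i,H_j$ and the pendant vertex $w$ — is asserted rather than proved, and the specific shortcuts you offer do not work. Your claim that after deleting a $3{+}3$ set, the remaining $6$ vertices span ``$K_{3,3}$ minus few edges, hence $H$'' fails: all $5$ missing edges could lie inside that $3\times 3$ block, leaving only $4$ edges, which cannot contain the $5$-edge tree $H$. The ``greedy embedding succeeds as long as degrees stay above the number of already-embedded neighbours'' fallback is not a valid sufficient condition for embedding a bipartite tree, and in any case the hard constraint is that $w$ has a prescribed unique neighbour $x$, which forces the embedding of $\hat H$ (or of the $K_2$ meeting $w$) to pass through $x$. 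Moreover, your framing treats the problem as purely about the bipartite graph $G[V(H_i),V(H_j)]$, whereas the paper's constructions (see Figures~\ref{Fig:one-edge-rotation-1}--\ref{Fig:one-edge-rotation-3}) essentially use the internal edges of $H_i$ and $H_j$ and, crucially, a case split on \emph{which} vertex of $H_i$ is in $L$ (a centre $u_i/v_i$ versus a leaf $a_i,\dots,d_i$), since the pendant edge to $w$ plays a structurally different role in the two cases. The paper resolves all of this through Claims~\ref{CLAIM:one-edge-a}--\ref{CLAIM:one-edge-c} and explicit edge-by-edge elimination until at least $6$ missing bipartite edges are forced; that case analysis is precisely what is missing from your argument. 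So the proposal identifies the right strategy but has a genuine gap: the decisive finite check is not carried out, and the proposed shortcuts are incorrect.
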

\begin{proof}[Proof of Lemma~\ref{LEMMA:H0-H1-upper-bound}]
    Fix a pair $\{H_i, H_j\} \subseteq \mathcal{H}'$ such that $|V(H_i) \cap L| \ge 1$.
    Suppose to the contrary that $e(H_i, H_j) \ge 31$. 

\medskip 

    \textbf{Case 1}: $\{u_i, v_i\} \cap L \neq\emptyset$. 

    By symmetry, we may assume that  $u_i \in L$. Fix an arbitrary vertex $w \in N_{G}(u_i, \overline{U})$. 
    Note from the definition of $L$ that the number of choices for such a vertex $w$ is at least $19 \delta^{1/6} n$. 

    \begin{claim}\label{CLAIM:one-edge-a}
        There exists a vertex in $\{a_j, b_j, c_j, d_j\}$ that has neighbors in both $\{a_i, b_i\}$ and $\{c_i, d_i\}$.
    \end{claim}
    \begin{proof}[Proof of Claim~\ref{CLAIM:one-edge-a}]
        Suppose to the contrary that this is not true. Then we would have 
        \begin{align*}
            e(H_i, H_j) 
            \le 36 - 4\cdot 2
            = 28 
            < 31,
        \end{align*}
        a contradiction. 
    \end{proof}

    By symmetry, we may assume that $\{d_jb_i, d_jd_i\} \subseteq G$.

    \begin{claim}\label{CLAIM:one-edge-b}
        The following statements hold. 
        \begin{enumerate}[label=(\roman*)]
            \item\label{CLAIM:one-edge-b-1} $\{b_ia_j, b_ib_j\}  \cap G = \emptyset$.
            \item\label{CLAIM:one-edge-b-2} $a_iv_j \not\in G$. 
            \item\label{CLAIM:one-edge-b-3} $\{ a_i a_j, a_i b_j\}  \cap G = \emptyset$.
        \end{enumerate}
    \end{claim}
    \begin{proof}[Proof of Claim~\ref{CLAIM:one-edge-b}]
        Let us first prove~\ref{CLAIM:one-edge-b-1}. 
        By symmetry, it suffices to show that $b_ia_j \not\in G$. 
        Observe that the set $\{u_i, v_i, a_i, w, c_i, d_i\}$ spans a copy of $H$ in $G$. 
        If $b_ia_j \in G$, then the set $\{u_j, v_j, a_j, b_j, c_j, d_j, b_i\}$ would span a copy of $\hat{H}$ in $G$. Since the number of choices for $w$ is at least $19 \delta^{1/6} n$, by definition,  the pair $\{H_i, H_j\}$ is $1$-extendable, a contradiction. 

        Next, we prove~\ref{CLAIM:one-edge-b-2}.
        Observe that the set $\{u_i, v_i, w, b_i, c_i, d_i, d_j\}$ spans a copy of $\hat{H}$ in $G$. If $a_iv_j \in G$, then the set $\{u_j, v_j, a_j, b_j, c_j, a_i\}$ would span a copy of $H$ in $G$. Since the number of choices for $w$ is at least $19 \delta^{1/6} n$, by definition,  the pair $\{H_i, H_j\}$ is $1$-extendable, a contradiction.  

\begin{figure}[H]
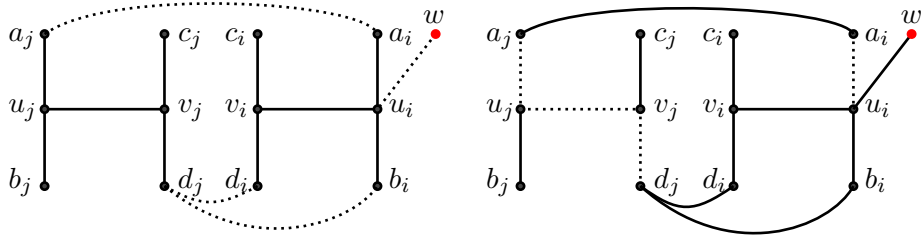

\centering
\tikzset{every picture/.style={line width=1pt}} 

\caption{Auxiliary figure for the proof of Claim~\ref{CLAIM:one-edge-b}~\ref{CLAIM:one-edge-b-3}.} 
\label{Fig:one-edge-rotation-1}
\end{figure}

        Finally, we prove~\ref{CLAIM:one-edge-b-3}. 
        By symmetry, it suffices to show that $a_i a_j \not\in G$. Suppose to the contrary that this fails. 
        Then the set $V(H_i \cup H_j) \cup \{w\}$ can be covered by a copy of $\hat{H}$ and three pairwise disjoint edges, as shown on the right side of  Figure~\ref{Fig:one-edge-rotation-1}. Since the number of choices for $w$ is at least $19 \delta^{1/6} n$, by definition,  the pair $\{H_i, H_j\}$ is $1$-extendable, a contradiction. 
    \end{proof}

\begin{figure}[H]
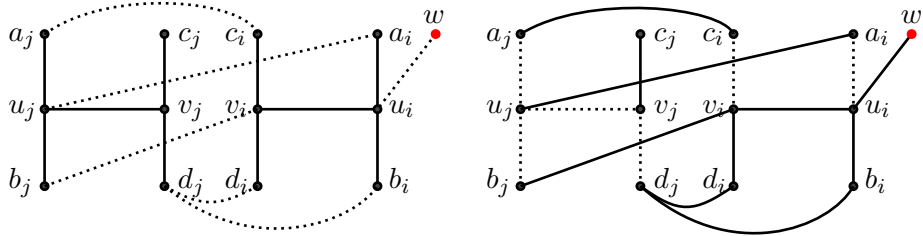

\centering
\tikzset{every picture/.style={line width=1pt}} 

\caption{Decomposition of $V(H_i \cup H_j) \cup \{w\}$ into  $\hat{H}$ and three pairwise disjoint edges.} 
\label{Fig:one-edge-rotation-2}
\end{figure}
    
    Note from Claim~\ref{CLAIM:one-edge-b} that there are already $5 = 36 - 31$ pairs in $V(H_i) \times V(H_j)$ that do not belong to $G[H_i, H_j]$. So all the remaining pairs in $V(H_i) \times \left(V(H_j) \cup \{w\} \right)$ are edges in $G[H_i, H_j]$. In particular, we have $\{a_iu_j, c_ia_j, v_ib_j\} \subseteq G$. 
    However, this implies that the set $V(H_i \cup H_j) \cup \{w\}$ can be covered by a copy of $\hat{H}$ and three pairwise disjoint edges, as shown on the right side of  Figure~\ref{Fig:one-edge-rotation-2}. Since the number of choices for $w$ is at least $19 \delta^{1/6} n$, by definition,  the pair $\{H_i, H_j\}$ is $1$-extendable, a contradiction.

    \medskip 

    \textbf{Case 2}: $\{a_i, b_i, c_i, d_i\} \cap L \neq\emptyset$. 
    
    By symmetry, we may assume that  $a_i \in L$. 
    Fix an arbitrary vertex $w \in N_{G}(a_i, \overline{U})$. 
    Note from the definition of $L$ that the number of choices for such a vertex $w$ is at least $19 \delta^{1/6} n$. 
    
\begin{figure}[H]
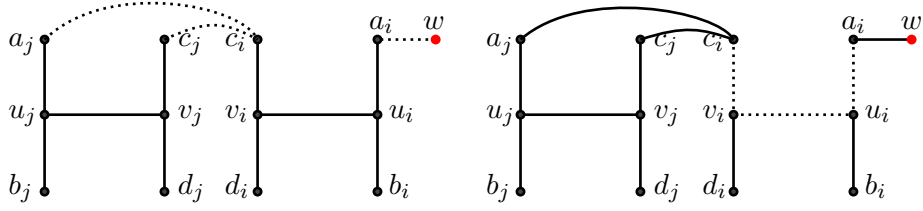

\centering
\tikzset{every picture/.style={line width=1pt}} 

\caption{Auxiliary figure for the proof of Claim~\ref{CLAIM:one-edge-c}.} 
\label{Fig:one-edge-rotation-4}
\end{figure}
    
    \begin{claim}\label{CLAIM:one-edge-c}
        The following statements hold. 
        \begin{enumerate}[label=(\roman*)]
            \item\label{CLAIM:one-edge-c-1} Either $\{c_ia_j, c_ib_j\} \cap G = \emptyset$ or $\{c_ic_j, c_id_j\} \cap G = \emptyset$.
            \item\label{CLAIM:one-edge-c-2} Either $\{d_ia_j, d_ib_j\} \cap G = \emptyset$ or $\{d_ic_j, d_id_j\} \cap G = \emptyset$.
        \end{enumerate}
    \end{claim}
    \begin{proof}[Proof of Claim~\ref{CLAIM:one-edge-c}]
        By symmetry, it suffices to show~\ref{CLAIM:one-edge-c-1}. 
        Suppose to the contrary that~\ref{CLAIM:one-edge-c-1} fails. 
        By symmetry,  we may assume that $\{c_ia_j, c_ic_j\} \subseteq G$. Then the set $V(H_i \cup H_j) \cup \{w\}$ can be covered by a copy of $\hat{H}$ and three pairwise disjoint edges, as shown on the right side of  Figure~\ref{Fig:one-edge-rotation-4}. Since the number of choices for $w$ is at least $19 \delta^{1/6} n$, by definition,  the pair $\{H_i, H_j\}$ is $1$-extendable, a contradiction. 
    \end{proof}

\begin{figure}[H]
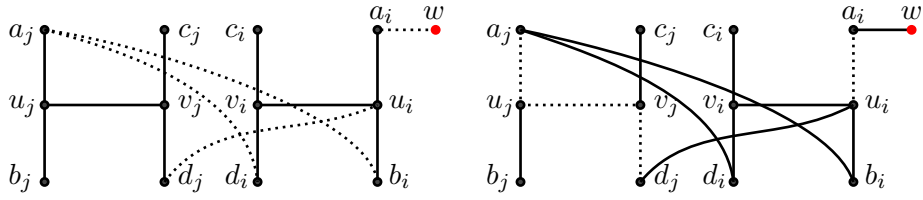

\centering
\tikzset{every picture/.style={line width=1pt}} 

\caption{Decomposition of $V(H_i \cup H_j) \cup \{w\}$ into  $\hat{H}$ and three pairwise disjoint edges.} 
\label{Fig:one-edge-rotation-3}
\end{figure}

    Note from Claim~\ref{CLAIM:one-edge-c} that there already at least four pairs in $V(H_i) \times V(H_j)$ that do not belong to $G[H_i, H_j]$. Therefore, $b_i$ is adjacent to at least three vertices in $\{a_j, b_j, c_j, d_j\}$, and by symmetry, we may assume that they are $\{a_j, b_j, c_j\}$. 
    For the same reason, the vertex $u_i$ has neighbors in both $\{a_j, b_j\}$ and $\{c_j, d_j\}$, and by symmetry, we may assume that $\{u_i b_j, u_id_j\} \subseteq G$. 
    
    Since $e(H_i, H_j) \ge 31 = 36-5$, there exists a vertex in $\{c_i, d_i\}$ that is adjacent to a vertex in $\{a_j, b_j, c_j\}$. By symmetry, we may assume that $d_i a_j \in G$. However, this implies that the set $V(H_i \cup H_j) \cup \{w\}$ can be covered by a copy of $\hat{H}$ and three pairwise disjoint edges, as shown on the right side of  Figure~\ref{Fig:one-edge-rotation-3}. Since the number of choices for $w$ is at least $19 \delta^{1/6} n$, by definition,  the pair $\{H_i, H_j\}$ is $1$-extendable, a contradiction. 
    This completes the proof of Lemma~\ref{LEMMA:H0-H1-upper-bound}. 
\end{proof}

\subsection{Upper bounds for $e(\mathcal{H}_1)$, $e(\mathcal{H}_1, \mathcal{H}_2)$, and $e(\mathcal{H}_1, \mathcal{H}_3)$}\label{SUBSEC:H1-upper-bound}
\begin{lemma}\label{LEMMA:H1-H1-upper-bound}
    Suppose that $\{H_i, H_j\} \subseteq \mathcal{H}'$ is a pair such that $|V(H_i) \cap L| \ge 1$ and $|V(H_j) \cap L| \ge 1$. 
    Then $e(H_i, H_j) \le 24$. 
    In particular, 
    \begin{align*}
        e(\mathcal{H}_1) 
        \le 24 \binom{y_1 n}{2} + \binom{6}{2}y_1 n
        \le 12 y_1^2 n^2 + 3y_1 n,  
    \end{align*}
    and $e(\mathcal{H}_1, \mathcal{H}_i) \le 24 y_1 y_i n^2$ for $i \in \{2,3\}$.
\end{lemma}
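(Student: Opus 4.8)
The plan is to establish the first assertion, $e(H_i,H_j)\le 24$, by contradiction; assume $e(H_i,H_j)\ge 25$. Fix a vertex $x\in V(H_i)\cap L$ and a vertex $y\in V(H_j)\cap L$. By the relabeling convention~\eqref{equ:assum-ci-di-not-in-L} we have $x\in\{u_i,v_i,a_i,b_i\}$ and $y\in\{u_j,v_j,a_j,b_j\}$, so (using the symmetries of $H$) there are only a few essentially different positions for $x$ and for $y$. Since $x,y\in L$, the number of unordered pairs $\{w_1,w_2\}$ of distinct vertices with $w_1\in N_G(x,\overline U)$ and $w_2\in N_G(y,\overline U)$ is at least $\tfrac{1}{2}\cdot 19\delta^{1/6}n\cdot(19\delta^{1/6}n-1)\ge 19\delta n^2$, exactly as in~\eqref{equ:w1w2-choices}. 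Hence it suffices to show that for every such pair the induced subgraph $G[V(H_i)\cup V(H_j)\cup\{w_1,w_2\}]$ contains a $\{K_2,H,\hat H\}$-tiling covering at least $13$ of its $14$ vertices: this would make $\{H_i,H_j\}$ a $2$-extendable pair inside $\mathcal H'$, a contradiction.

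The guiding observation is that a $\{K_2,H,\hat H\}$-tiling covering $13$ vertices must consist of exactly one copy of $\hat H$ together with either a copy of $H$ or three pairwise disjoint edges, because $13$ is odd and $\hat H$ is the unique member of $\{K_2,H,\hat H\}$ of odd order. Thus it is enough to exhibit a copy of $\hat H$ in $G[V(H_i)\cup V(H_j)\cup\{w_1,w_2\}]$ whose removal leaves a set $T$ of $7$ vertices such that $G[T]$ contains either a copy of $H$ or three pairwise disjoint edges. In the main case, the two pendant edges $xw_1$ and $yw_2$ will be two of these three disjoint edges, so one only needs a copy of $\hat H$ together with one further disjoint edge inside $G[(V(H_i)\setminus\{x\})\cup(V(H_j)\setminus\{y\})]$; here the bipartite graph $G[V(H_i)\setminus\{x\},V(H_j)\setminus\{y\}]$ has at least $e(H_i,H_j)-d_G(x,V(H_j))-d_G(y,V(H_i))\ge 25-6-6=13$ edges on $5+5$ vertices, in addition to the internal tree-edges of $H_i$ and $H_j$ on these ten vertices, which is ample flexibility to locate such an $\hat H$. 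In the remaining (degenerate) configurations --- roughly, when this bipartite graph has a small vertex cover, so that some vertex of $H_i$ or $H_j$ is nearly isolated across --- one absorbs a pendant vertex $w_1$ (or $w_2$) into the $\hat H$-copy as one of its two leaves, using that $x$ (resp.\ $y$) has two further neighbours among $V(H_i)\cup V(H_j)$. Carrying this out is a finite case analysis on the positions of $x,y$ and on which of the at most $11$ pairs of $V(H_i)\times V(H_j)$ are non-edges, presented with auxiliary figures in the style of the preceding subsections. I expect this case analysis to be the main obstacle, in particular checking exhaustiveness so that the bound $25$ (rather than something weaker) is really forced.

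The ``in particular'' statements follow by summation. Every copy of $H$ spans at most $\binom{6}{2}=15$ edges of $G$, and every pair of distinct members of $\mathcal H_1$ contributes at most $24$ edges to $G[V_1]$ by the first assertion, so
\begin{align*}
    e(\mathcal H_1)
    \le 24\binom{|\mathcal H_1|}{2}+15|\mathcal H_1|
    = 24\binom{y_1n}{2}+15y_1n
    = 12y_1^2n^2+3y_1n .
\end{align*}
Likewise, every pair $\{H,H'\}$ with $H\in\mathcal H_1$ and $H'\in\mathcal H_i$ (for $i\in\{2,3\}$) has $|V(H)\cap L|\ge 1$ and $|V(H')\cap L|\ge 1$, whence $e(\mathcal H_1,\mathcal H_i)\le 24|\mathcal H_1||\mathcal H_i|=24y_1y_in^2$, as claimed.
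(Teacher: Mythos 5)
Your setup (assume $e(H_i,H_j)\ge 25$, use a vertex $x\in V(H_i)\cap L$, $y\in V(H_j)\cap L$ to produce many pairs $\{w_1,w_2\}$, then show $2$-extendability for a contradiction) matches the paper's, and the ``in particular'' deductions at the end are correct. But there are two substantive problems with the body of the argument.

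First, your ``guiding observation'' is false. You argue that any $\{K_2,H,\hat H\}$-tiling covering $13$ vertices must contain a copy of $\hat H$ because $13$ is odd and $\hat H$ is the only piece of odd order. But the definition of extendability requires covering \emph{at least} $13$ vertices, not exactly $13$, so a tiling covering all $14$ vertices also suffices --- and such a tiling may consist entirely of copies of $H$ and $K_2$. Indeed, in the paper's proof of this very lemma, $\hat H$ is never used: the witnessing tilings are always of the form ``two disjoint copies of $H$ and one edge'' ($14$ vertices), ``one copy of $H$ and four disjoint edges'' ($14$), or ``seven disjoint edges'' ($14$). By fixating on locating a $\hat H$-copy you have made the target structure strictly harder to find than necessary, and the parity reasoning you give for why $\hat H$ is forced does not hold.

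Second, and more seriously, the core of the proof is missing. The sentence asserting that $13$ bipartite edges on $5+5$ vertices plus the tree-edges of $H_i,H_j$ give ``ample flexibility to locate such an $\hat H$'' is not an argument --- it is precisely the claim that needs to be verified, and you acknowledge this yourself when you write that the case analysis is ``the main obstacle'' you expect but have not carried out. In the paper, this is where essentially all of the work lies: the proof splits on the position of $(x,y)$ into three cases (both in $\{u,v\}$; both in $\{a,b,c,d\}$; mixed), and in each case establishes a sequence of claims about which pairs in $V(H_i)\times V(H_j)$ cannot be edges (each such claim being proved by exhibiting an explicit $14$-vertex $\{K_2,H\}$-tiling), accumulating at least $12$ forced non-edges and thereby contradicting $e(H_i,H_j)\ge 25$. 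Without doing that case analysis, or some equally explicit substitute, the key inequality $e(H_i,H_j)\le 24$ is not proved. As written, your proposal is a plausible plan, not a proof.
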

\begin{proof}[Proof of Lemma~\ref{LEMMA:H1-H1-upper-bound}]
    Fix a pair $\{H_i, H_j\} \subseteq \mathcal{H}'$ such that $|V(H_i) \cap L| \ge 1$ and $|V(H_j) \cap L| \ge 1$. 
    Suppose to the contrary that $e(H_i, H_j) \ge 25$. 
    Fix a vertex $x \in V(H_i) \cap L$ and a vertex $y \in V(H_j) \cap L$. 
    Fix an arbitrary pair of distinct vertices $(w_1, w_2) \in N_{G}(x, \overline{U}) \times N_{G}(y, \overline{U})$. 
    Note from the definition of $L$ that the number of such (unordered) pairs $\{w_1, w_2\}$ is at least 
    \begin{align}\label{equ:w1-w2-choices}
        \frac{1}{2} \cdot 19 \delta^{1/6} n \cdot (19 \delta^{1/6} n - 1) 
        \ge 19 \delta n^2. 
    \end{align}

    \medskip 
    
    \textbf{Case 1}: $(x, y) \in \{u_i, v_i\} \times \{u_j, v_j\}$.

    By symmetry, we may assume that $\{x,y\} = \{u_i, u_j\}$. 

\begin{figure}[H]
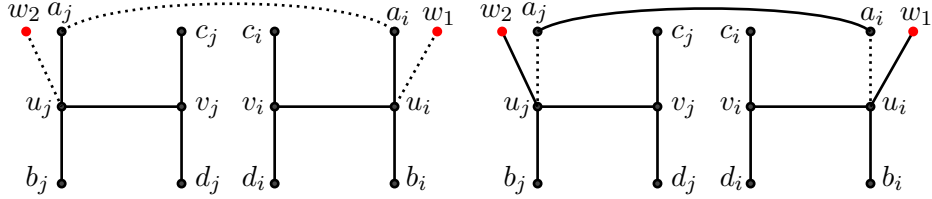

\centering
\tikzset{every picture/.style={line width=1pt}} 

\caption{Auxiliary figure for the proof of Claim~\ref{CLAIM:two-edges-j}.} 
\label{Fig:two-edges-rotation-16}
\end{figure}

    \begin{claim}\label{CLAIM:two-edges-j}
        No pair in $\{a_i, b_i\} \times \{a_j, b_j\}$ is an edge in $G$. 
    \end{claim}
    \begin{proof}[Proof of Claim~\ref{CLAIM:two-edges-j}]  
        Suppose to the contrary that this claim fails. By symmetry, we may assume that $a_i a_j \in G$. Then $G[W]$ contains two disjoint copies of $H$ and one edge, which together cover $14$ vertices (see Figure~\ref{Fig:two-edges-rotation-16}). 
        By~\eqref{equ:w1-w2-choices}, this means that $\{H_i, H_j\}$ is $2$-extendable, which is a contradiction.
    \end{proof} 

\begin{figure}[H]
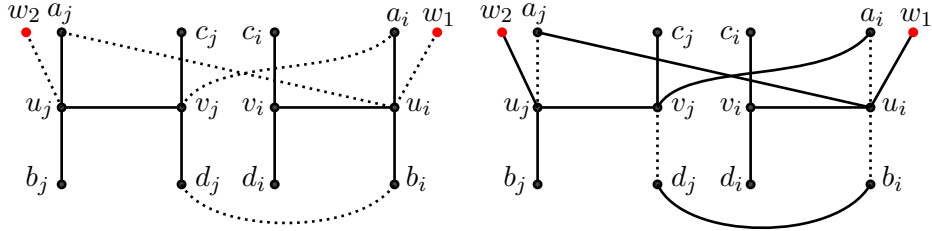

\centering
\tikzset{every picture/.style={line width=1pt}} 

\caption{Auxiliary figure for the proof of Claim~\ref{CLAIM:two-edges-k}.} 
\label{Fig:two-edges-rotation-17}
\end{figure}

    \begin{claim}\label{CLAIM:two-edges-k}
        The following statements hold. 
        \begin{enumerate}[label=(\roman*)]
        \item\label{CLAIM:two-edges-k-a} Either $\{u_i a_j, u_i b_j\} \cap G = \emptyset$, or $a_iv_j \not\in G$, or $\{b_i c_j, b_i d_j\} \cap G = \emptyset$.
        \item\label{CLAIM:two-edges-k-b} Either $\{u_i a_j, u_i b_j\} \cap G = \emptyset$, or $b_iv_j \not\in G$, or $\{a_i c_j, a_i d_j\} \cap G = \emptyset$. 
        \item\label{CLAIM:two-edges-k-c} Either $\{u_j a_i, u_j b_i\} \cap G = \emptyset$, or $a_jv_i \not\in G$, or $\{b_j c_i, b_j d_i\} \cap G = \emptyset$.
        \item\label{CLAIM:two-edges-k-d} Either $\{u_j a_i, u_j b_i\} \cap G = \emptyset$, or $b_jv_i \not\in G$, or $\{a_j c_i, a_j d_i\} \cap G = \emptyset$. 
        \end{enumerate}
    \end{claim}
    \begin{proof}[Proof of Claim~\ref{CLAIM:two-edges-k}]
        By symmetry, it suffices to prove~\ref{CLAIM:two-edges-k-a}. 
        Suppose to the contrary that it fails. By symmetry, we may assume that $\{u_i a_j, a_i v_j, b_i d_j\} \subseteq G$. 
        Then $G[W]$ contains two disjoint copies of $H$ and one disjoint edge, which together cover $14$ vertices (see Figure~\ref{Fig:two-edges-rotation-17}). By~\eqref{equ:w1-w2-choices}, this means that $\{H_i, H_j\}$ is $2$-extendable, which is a contradiction.
    \end{proof} 

\begin{figure}[H]
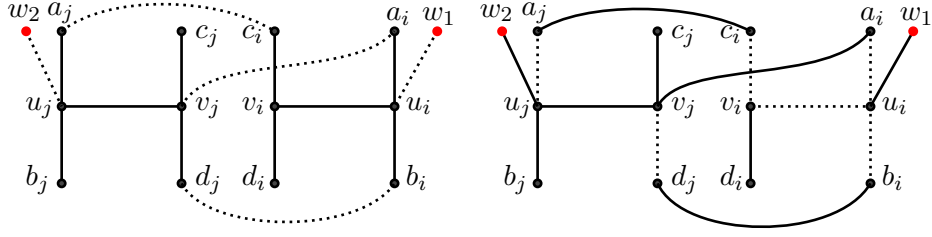

\centering
\tikzset{every picture/.style={line width=1pt}} 

\caption{Auxiliary figure for the proof of Claim~\ref{CLAIM:two-edges-l}.} 
\label{Fig:two-edges-rotation-18}
\end{figure}

    \begin{claim}\label{CLAIM:two-edges-l}
        The following statements hold. 
        \begin{enumerate}[label=(\roman*)]
        \item\label{CLAIM:two-edges-l-a} Either no pair in $\{c_i, d_i\} \times \{a_j, b_j\}$ is an edge in $G$, or $a_iv_j \not\in G$, or $\{b_i c_j, b_i d_j\} \cap G = \emptyset$. 
        \item\label{CLAIM:two-edges-l-b} Either no pair in $\{c_i, d_i\} \times \{a_j, b_j\}$ is an edge in $G$, or $b_iv_j \not\in G$, or $\{a_i c_j, a_i d_j\} \cap G = \emptyset$. 
        \item\label{CLAIM:two-edges-l-c} Either no pair in $\{c_j, d_j\} \times \{a_i, b_i\}$ is an edge in $G$, or $a_jv_i \not\in G$, or $\{b_j c_i, b_j d_i\} \cap G = \emptyset$.
        \item\label{CLAIM:two-edges-l-d} Either no pair in $\{c_j, d_j\} \times \{a_i, b_i\}$ is an edge in $G$, or $b_jv_i \not\in G$, or $\{a_j c_i, a_j d_i\} \cap G = \emptyset$. 
        \end{enumerate} 
    \end{claim}
    \begin{proof}[Proof of Claim~\ref{CLAIM:two-edges-l}]
        By symmetry, it suffices to prove~\ref{CLAIM:two-edges-l-a}. 
        Suppose to the contrary that it fails. By symmetry, we may assume that $\{c_i a_j, a_i v_j, b_i d_j\} \subseteq G$. 
        Then $G[W]$ contains a copy of $H$ and four disjoint edges, which together cover $14$ vertices (see Figure~\ref{Fig:two-edges-rotation-18}). By~\eqref{equ:w1-w2-choices}, this means that $\{H_i, H_j\}$ is $2$-extendable, which is a contradiction.
    \end{proof} 

\begin{figure}[H]
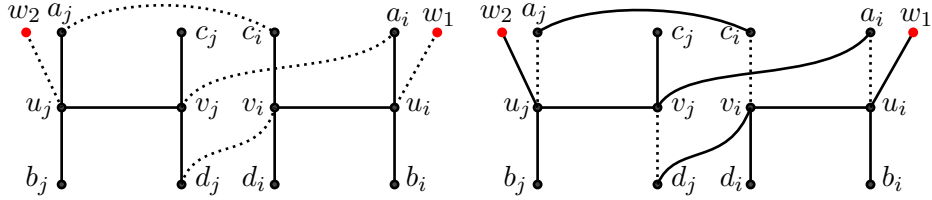

\centering
\tikzset{every picture/.style={line width=1pt}} 

\caption{Auxiliary figure for the proof of Claim~\ref{CLAIM:two-edges-m}.} 
\label{Fig:two-edges-rotation-19}
\end{figure}

    \begin{claim}\label{CLAIM:two-edges-m}
        The following statements hold. 
        \begin{enumerate}[label=(\roman*)]
            \item\label{CLAIM:two-edges-m-a} Either no pair in $\{c_i, d_i\} \times \{a_j, b_j\}$ is an edge in $G$, or $\{a_i v_j, b_i v_j\} \cap G = \emptyset$, or $\{v_i c_j, v_i d_j\} \cap G = \emptyset$. 
            \item\label{CLAIM:two-edges-m-b} Either no pair in $\{c_j, d_j\} \times \{a_i, b_i\}$ is an edge in $G$, or $\{a_j v_i, b_j v_i\} \cap G = \emptyset$, or $\{v_j c_i, v_j d_i\} \cap G = \emptyset$. 
        \end{enumerate}
    \end{claim}
    \begin{proof}[Proof of Claim~\ref{CLAIM:two-edges-m}]
        By symmetry, it suffices to prove~\ref{CLAIM:two-edges-m-a}. 
        Suppose to the contrary that it fails. By symmetry, we may assume that $\{c_i a_j, a_i v_j, v_i d_j\} \subseteq G$. 
        Then $G[W]$ contains two disjoint copies of $H$ and one disjoint edge, which together cover $14$ vertices (see Figure~\ref{Fig:two-edges-rotation-19}). By~\eqref{equ:w1-w2-choices}, this means that $\{H_i, H_j\}$ is $2$-extendable, which is a contradiction.
    \end{proof} 

\begin{figure}[H]
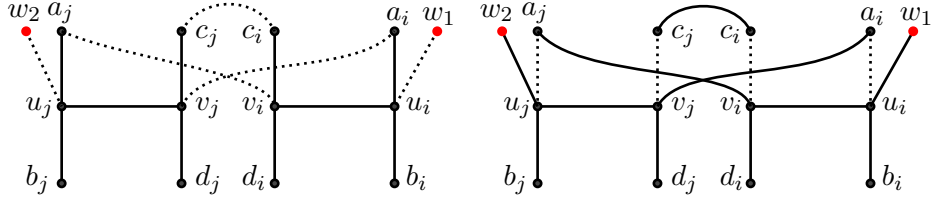

\centering
\tikzset{every picture/.style={line width=1pt}} 

\caption{Auxiliary figure for the proof of Claim~\ref{CLAIM:two-edges-n}.} 
\label{Fig:two-edges-rotation-20}
\end{figure}

    \begin{claim}\label{CLAIM:two-edges-n}
        Either no pair in $\{c_i, d_i\} \times \{c_j, d_j\}$ is an edge in $G$, or $\{a_i v_j, b_i v_j\} \cap G = \emptyset$, or $\{v_i a_j, v_i b_j\} \cap G = \emptyset$. 
    \end{claim}
    \begin{proof}[Proof of Claim~\ref{CLAIM:two-edges-n}]
        Suppose to the contrary that this claim fails. By symmetry, we may assume that $\{c_i c_j, a_i v_j, v_i a_j\} \subseteq G$. 
        Then $G[W]$ contains two disjoint copies of $H$ and one disjoint edge, which together cover $14$ vertices (see Figure~\ref{Fig:two-edges-rotation-20}). By~\eqref{equ:w1-w2-choices}, this means that $\{H_i, H_j\}$ is $2$-extendable, which is a contradiction.
    \end{proof} 

\begin{figure}[H]
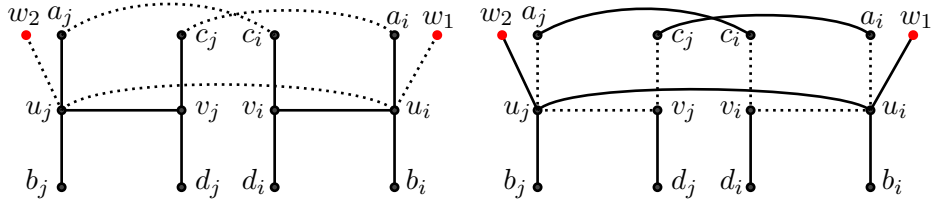

\centering
\tikzset{every picture/.style={line width=1pt}} 

\caption{Auxiliary figure for the proof of Claim~\ref{CLAIM:two-edges-o}.} 
\label{Fig:two-edges-rotation-21}
\end{figure}

    \begin{claim}\label{CLAIM:two-edges-o}
        Either no pair in $\{a_i, b_i\} \times \{c_j, d_j\}$ is an edge in $G$, or no pair in $\{c_i, d_i\} \times \{a_j, b_j\}$ is an edge in $G$, or $u_i u_j \not\in G$. 
    \end{claim}
    \begin{proof}[Proof of Claim~\ref{CLAIM:two-edges-o}]
        Suppose to the contrary that this claim fails. By symmetry, we may assume that $\{a_i c_j, c_i a_j, u_i u_j\} \subseteq G$. 
        Then $G[W]$ contains a copy of $H$ and four disjoint edges, which together cover $14$ vertices (see Figure~\ref{Fig:two-edges-rotation-21}). By~\eqref{equ:w1-w2-choices}, this means that $\{H_i, H_j\}$ is $2$-extendable, which is a contradiction. 
    \end{proof} 

    \begin{claim}\label{CLAIM:two-edges-p}
        The following statements hold. 
        \begin{enumerate}[label=(\roman*)]
            \item\label{CLAIM:two-edges-p-1} $\{a_i v_j, b_i v_j\} \cap G = \emptyset$. 
            \item\label{CLAIM:two-edges-p-2} $\{v_i a_j, v_i b_j\} \cap G = \emptyset$. 
        \end{enumerate} 
    \end{claim}
    \begin{proof}[Proof of Claim~\ref{CLAIM:two-edges-p}]
        Note from the assumption $e(H_i, H_j) \ge 25$ that there are at most $36 - 25 = 11$ pairs in $V(H_i) \times V(H_j)$ that are not edges in $G$ (let us call them missing edges). 
        If this claim fails, then by applying Claims~\ref{CLAIM:two-edges-j},~\ref{CLAIM:two-edges-k},~\ref{CLAIM:two-edges-l},~\ref{CLAIM:two-edges-m},~\ref{CLAIM:two-edges-n}, and~\ref{CLAIM:two-edges-o}, one could verify (albeit somewhat tediously) that there would be at least $12$ missing edges, which is a contradiction. 
    \end{proof} 

    \begin{claim}\label{CLAIM:two-edges-q} 
        We have $\{u_i u_j, v_i v_j\} \cap G = \emptyset$.   
    \end{claim}
    \begin{proof}[Proof of Claim~\ref{CLAIM:two-edges-q}]
        Let us first prove that $u_i u_j \not\in G$. Suppose to the contrary that $u_i u_j \in G$. 
        Then by Claim~\ref{CLAIM:two-edges-j},~\ref{CLAIM:two-edges-o}, and~\ref{CLAIM:two-edges-p}, there will be at least $12 > 36 - e(H_i, H_j)$ pairs in $V(H_i) \times V(H_j)$ that are not edges in $G$, which is a contradiction. 

\begin{figure}[H]
\centering
\tikzset{every picture/.style={line width=1pt}} 

\caption{Auxiliary figure for the proof of Claim~\ref{CLAIM:two-edges-q}.} 
\label{Fig:two-edges-rotation-22}
\end{figure}

        We now prove that $v_i v_j \not\in G$. 
        First, suppose for contradiction that $\{c_i a_j, c_i b_j\} \cap G = \emptyset$. 
        Then combined with the fact $u_i u_j \not\in G$ (as established in the argument above) and Claims~\ref{CLAIM:two-edges-j}~\ref{CLAIM:two-edges-p}, we would have $11 \ge 36 - e(H_i, H_j)$ missing edges. This implies that all other pairs in $V(H_i) \times V(H_j)$ are edges in $G$. In particular, $\{d_iu_j, d_ib_j, d_ju_i, d_jb_i, c_ic_j\} \subseteq G$. 
        Then $G[W]$ would contain two disjoint copies of $H$ and one disjoint edge, which together cover $14$ vertices (see Figure~\ref{Fig:two-edges-rotation-22}). By~\eqref{equ:w1-w2-choices}, this means that $\{H_i, H_j\}$ is $2$-extendable, which is a contradiction. Therefore, we have $\{c_i a_j, c_i b_j\} \cap G \neq \emptyset$. By symmetry, we have
        \begin{align*}
            \{d_i a_j, d_i b_j\} \cap G \neq \emptyset,\quad
            \{a_j c_i, a_j d_i\} \cap G \neq \emptyset,\quad 
            \{b_j c_i, b_j d_i\} \cap G \neq \emptyset, 
        \end{align*}
        and 
        \begin{align*}
            \{c_j a_i, c_j b_i\} \cap G \neq \emptyset, \quad 
            \{d_j a_i, d_j b_i\} \cap G \neq \emptyset, \quad 
            \{a_i c_j, a_i d_j\} \cap G \neq \emptyset,\quad 
            \{b_i c_j, b_i d_j\} \cap G \neq \emptyset. 
        \end{align*}
        It follows that both induced bipartite graphs $G[\{c_i, d_i\}, \{a_j, b_j\}]$ and $G[\{c_j, d_j\}, \{a_i, b_i\}]$ contain a matching of size two. By symmetry, we may assume that they are $\{c_ia_j, d_i b_j\}$ and $\{a_i c_j, b_i d_j\}$, respectively. In other words, $\{c_i a_j, d_i b_j, a_i c_j, b_i d_j\} \subseteq G$. 

\begin{figure}[H]
\centering
\tikzset{every picture/.style={line width=1pt}} 

\caption{Auxiliary figure for the proof of Claim~\ref{CLAIM:two-edges-q}.} 
\label{Fig:two-edges-rotation-23}
\end{figure}
        
        Now suppose to the contrary that $v_i v_j \in G$. Then $G[W]$ would contain seven pairwise disjoint edges, which together cover $14$ vertices (see Figure~\ref{Fig:two-edges-rotation-23}). By~\eqref{equ:w1-w2-choices}, this means that $\{H_i, H_j\}$ is $2$-extendable, which is a contradiction.
    \end{proof} 

\begin{figure}[H]
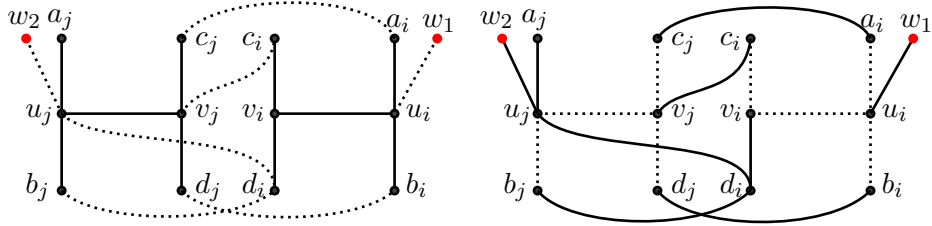

\centering
\tikzset{every picture/.style={line width=1pt}} 

\caption{Decomposition of $W$ into a copy of $H$ and four disjoint edges.} 
\label{Fig:two-edges-rotation-24}
\end{figure}

    Notice that Claims~\ref{CLAIM:two-edges-j},~\ref{CLAIM:two-edges-p}, and~\ref{CLAIM:two-edges-q} already contribute $10$ missing edges. Since $e(H_i, H_j) \ge 25 = 36 - 10 - 1$, there must be a matching of size two in $\{u_j, v_j\} \times \{c_i, d_i\}$, and also in $\{a_i, b_i\} \times \{c_j, d_j\}$. Similarly, $d_i$ has neighbor in $\{a_j, b_j\}$. By symmetry, we may assume that $\{c_iv_j, d_iu_j, a_i c_j, b_i d_j, d_i b_j\} \subseteq G$. 
    Then $G[W]$ contains a copy of $H$ and four disjoint edges, which together cover $14$ vertices (see Figure~\ref{Fig:two-edges-rotation-24}). By~\eqref{equ:w1-w2-choices}, this means that $\{H_i, H_j\}$ is $2$-extendable, which is a contradiction.
    
    \medskip

    \textbf{Case 2}: $(x, y) \in \{a_i, b_i, c_i, d_i\} \times \{a_j, b_j, c_j, d_j\}$. 

    By symmetry, we may assume that $\{x,y\} = \{a_i, a_j\}$. 

\begin{figure}[H]
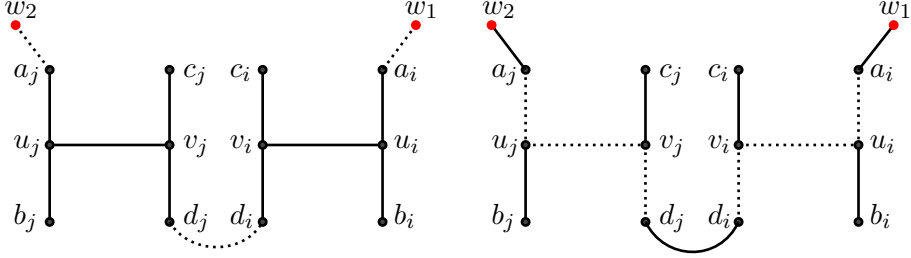

\centering
\tikzset{every picture/.style={line width=1pt}} 

\caption{Auxiliary figure for the proof of Claim~\ref{CLAIM:two-edges-r}.} 
\label{Fig:two-edges-rotation-25}
\end{figure}

    \begin{claim}\label{CLAIM:two-edges-r}
        No pair in $\{c_i, d_i\} \times \{c_j, d_j\}$ is an edge in $G$. 
    \end{claim}
    \begin{proof}[Proof of Claim~\ref{CLAIM:two-edges-r}]
        Suppose to the contrary that this claim fails. By symmetry, we may assume that $d_i d_j \in G$. 
        Then $G[W]$ would contain seven disjoint edges, which together cover $14$ vertices (see Figure~\ref{Fig:two-edges-rotation-25}). By~\eqref{equ:w1-w2-choices}, this means that $\{H_i, H_j\}$ is $2$-extendable, which is a contradiction. 
    \end{proof} 

    \begin{claim}\label{CLAIM:two-edges-s}
        The following statements hold. 
        \begin{enumerate}[label=(\roman*)]
            \item\label{CLAIM:two-edges-s-1} $\{u_i c_j, u_i d_j\} \cap G = \emptyset$. 
            \item\label{CLAIM:two-edges-s-2} $\{u_j c_i, u_j d_i\} \cap G = \emptyset$. 
            \item\label{CLAIM:two-edges-s-3} $v_i v_j \not\in G$.
        \end{enumerate}  
    \end{claim}
    \begin{proof}[Proof of Claim~\ref{CLAIM:two-edges-s}]
\begin{figure}[H]
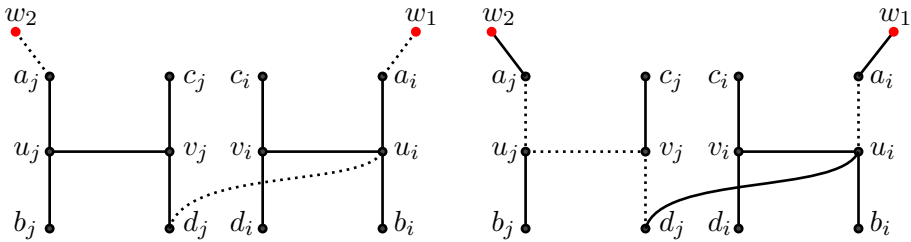

\centering
\tikzset{every picture/.style={line width=1pt}} 

\caption{Auxiliary figure for the proof of Claim~\ref{CLAIM:two-edges-s}~\ref{CLAIM:two-edges-s-1} and~\ref{CLAIM:two-edges-s-2}.} 
\label{Fig:two-edges-rotation-26}
\end{figure}
        
        Let us first prove~\ref{CLAIM:two-edges-s-1}. Suppose to the contrary that~\ref{CLAIM:two-edges-s-1} fails. By symmetry, we may assume that $u_i d_j \in G$. 
        Then $G[W]$ contains a copy of $H$ and four disjoint edges, which together cover $14$ vertices (see Figure~\ref{Fig:two-edges-rotation-26}). 
        By~\eqref{equ:w1-w2-choices}, this means that $\{H_i, H_j\}$ is $2$-extendable, which is a contradiction.
        
        By symmetry,~\ref{CLAIM:two-edges-s-2} holds as well. 

\begin{figure}[H]
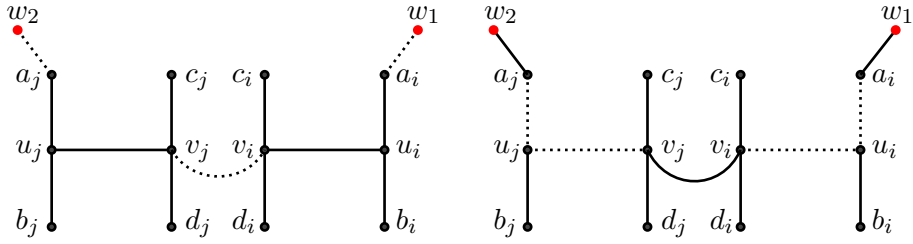

\centering
\tikzset{every picture/.style={line width=1pt}} 

\caption{Auxiliary figure for the proof of Claim~\ref{CLAIM:two-edges-s}~\ref{CLAIM:two-edges-s-3}.} 
\label{Fig:two-edges-rotation-27}
\end{figure}

        Next, we prove~\ref{CLAIM:two-edges-s-3}. Suppose to the contrary that $v_i v_j \in G$. Then $G[W]$ contains a copy of $H$ and four disjoint edges, which together cover $14$ vertices (see Figure~\ref{Fig:two-edges-rotation-27}). 
        By~\eqref{equ:w1-w2-choices}, this means that $\{H_i, H_j\}$ is $2$-extendable, which is a contradiction.
    \end{proof} 

\begin{figure}[H]
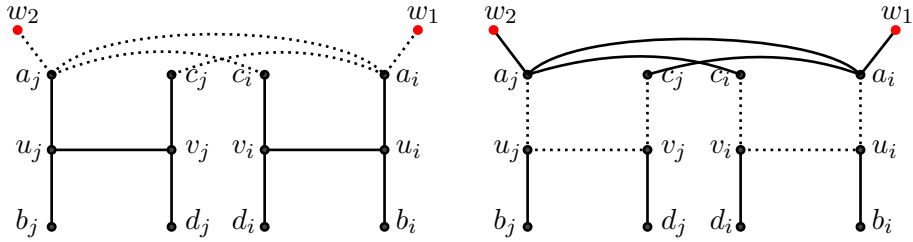

\centering
\tikzset{every picture/.style={line width=1pt}} 

\caption{Auxiliary figure for the proof of Claim~\ref{CLAIM:two-edges-t}.} 
\label{Fig:two-edges-rotation-28}
\end{figure}

    \begin{claim}\label{CLAIM:two-edges-t}
        Either $\{a_i c_j, a_i d_j\} \cap G = \emptyset$, or $\{a_j c_i, a_j d_i\} \cap G = \emptyset$, or $a_i a_j \not\in G$.
    \end{claim}
    \begin{proof}[Proof of Claim~\ref{CLAIM:two-edges-t}]
        Suppose to the contrary that this claim fails. By symmetry, we may assume that $\{a_i a_j, a_i c_j, a_j c_i\} \subseteq G$. 
        Then $G[W]$ contains a copy of $H$ and four disjoint edges, which together cover $14$ vertices (see Figure~\ref{Fig:two-edges-rotation-28}). By~\eqref{equ:w1-w2-choices}, this means that $\{H_i, H_j\}$ is $2$-extendable, which is a contradiction. 
    \end{proof} 

\begin{figure}[H]
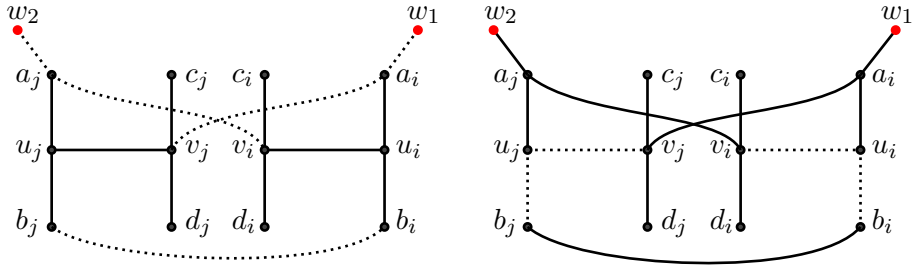

\centering
\tikzset{every picture/.style={line width=1pt}} 

\caption{Auxiliary figure for the proof of Claim~\ref{CLAIM:two-edges-u}.} 
\label{Fig:two-edges-rotation-29}
\end{figure}

    \begin{claim}\label{CLAIM:two-edges-u}
        At least one pair in $\{a_i v_j, v_i a_j, b_i b_j\}$ is not an edge in $G$. 
    \end{claim}
    \begin{proof}[Proof of Claim~\ref{CLAIM:two-edges-u}]
        Suppose to the contrary that $\{a_i v_j, v_i a_j, b_i b_j\} \subseteq G$. 
        Then $G[W]$ would contain two disjoint copies of $H$ and one disjoint edge, which together cover $14$ vertices (see Figure~\ref{Fig:two-edges-rotation-29}). By~\eqref{equ:w1-w2-choices}, this means that $\{H_i, H_j\}$ is $2$-extendable, which is a contradiction.
    \end{proof} 

\begin{figure}[H]
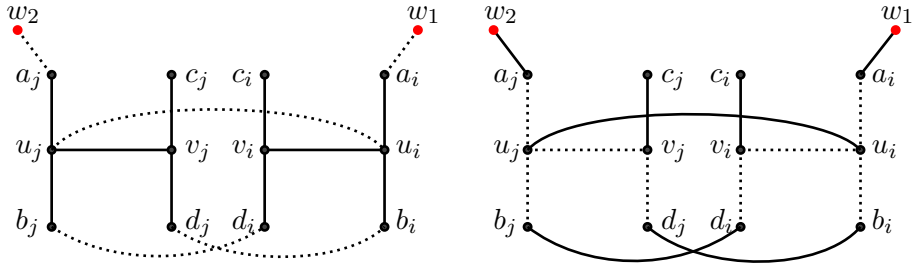

\centering
\tikzset{every picture/.style={line width=1pt}} 

\caption{Auxiliary figure for the proof of Claim~\ref{CLAIM:two-edges-v}.} 
\label{Fig:two-edges-rotation-30}
\end{figure}

    \begin{claim}\label{CLAIM:two-edges-v}
        Either $\{b_i c_j, b_i d_j\} \cap G = \emptyset$, or $\{b_j c_i, b_j d_i\} \cap G = \emptyset$, or $u_i u_j \not\in G$. 
    \end{claim}
    \begin{proof}[Proof of Claim~\ref{CLAIM:two-edges-v}]
        Suppose to the contrary that this claim fails. By symmetry, we may assume that $\{u_i u_j, b_i d_j, b_j d_i\} \subseteq G$. 
        Then $G[W]$ contains seven disjoint edges, which together cover $14$ vertices (see Figure~\ref{Fig:two-edges-rotation-30}). By~\eqref{equ:w1-w2-choices}, this means that $\{H_i, H_j\}$ is $2$-extendable, which is a contradiction. 
    \end{proof} 

    Note that Claims~\ref{CLAIM:two-edges-r},~\ref{CLAIM:two-edges-s},~\ref{CLAIM:two-edges-t},~\ref{CLAIM:two-edges-u}, and~\ref{CLAIM:two-edges-v} contribute at least $12 > 36 - e(H_i, H_j)$ missing edges, which is a contradiction.

    \medskip

    \textbf{Case 3}: $(x,y) \in \{u_i, v_i\} \times \{a_j, b_j, c_j, d_j\}$  or $(x,y) \in \{u_j, v_j\} \times \{a_i, b_i, c_i, d_i\}$. 

    By symmetry, we may assume that  $\{x,y\} = \{u_i, a_j\}$. 
    
    \begin{claim}\label{CLAIM:two-edges-w}
        The following statements hold. 
        \begin{enumerate}[label=(\roman*)]
            \item\label{CLAIM:two-edges-w-1} No pair in $\{a_i, b_i\} \times \{c_j, d_j\}$ is an edge in $G$. 
            \item\label{CLAIM:two-edges-w-2} $\{u_j a_i, u_j b_i\} \cap G = \emptyset$. 
        \end{enumerate} 
    \end{claim}
    \begin{proof}[Proof of Claim~\ref{CLAIM:two-edges-w}]
\begin{figure}[H]
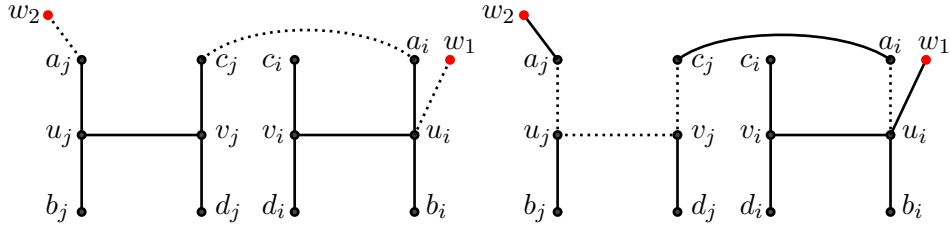

\centering
\tikzset{every picture/.style={line width=1pt}} 

\caption{Auxiliary figure for the proof of Claim~\ref{CLAIM:two-edges-w}~\ref{CLAIM:two-edges-w-1}.} 
\label{Fig:two-edges-rotation-31}
\end{figure}
        
        Let us first prove~\ref{CLAIM:two-edges-w-1}. Suppose to the contrary that~\ref{CLAIM:two-edges-w-1} fails. By symmetry, we may assume that $a_i c_j \in G$. 
        Then $G[W]$ contains a copy of $H$ and four disjoint edges, which together cover $14$ vertices (see Figure~\ref{Fig:two-edges-rotation-31}). By~\eqref{equ:w1-w2-choices}, this means that $\{H_i, H_j\}$ is $2$-extendable, which is a contradiction. 

\begin{figure}[H]
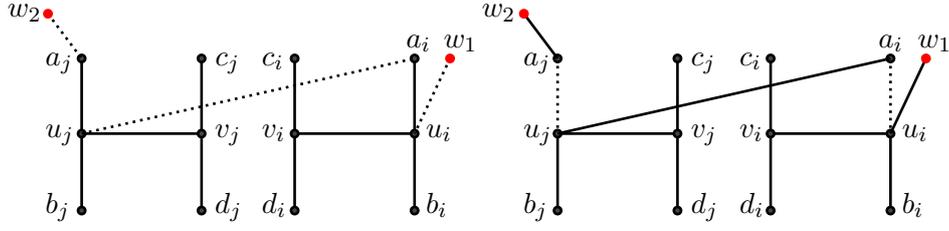

\centering
\tikzset{every picture/.style={line width=1pt}} 

\caption{Auxiliary figure for the proof of Claim~\ref{CLAIM:two-edges-w}~\ref{CLAIM:two-edges-w-2}.} 
\label{Fig:two-edges-rotation-32}
\end{figure}

        Next, we prove~\ref{CLAIM:two-edges-w-2}. Suppose to the contrary that~\ref{CLAIM:two-edges-w-2} fails. By symmetry, we may assume that $a_i u_j \in G$. Then $G[W]$ contains two disjoint copies of $H$ and one disjoint edge, which together cover $14$ vertices (see Figure~\ref{Fig:two-edges-rotation-32}). 
        By~\eqref{equ:w1-w2-choices}, this means that $\{H_i, H_j\}$ is $2$-extendable, which is a contradiction.
    \end{proof} 

\begin{figure}[H]
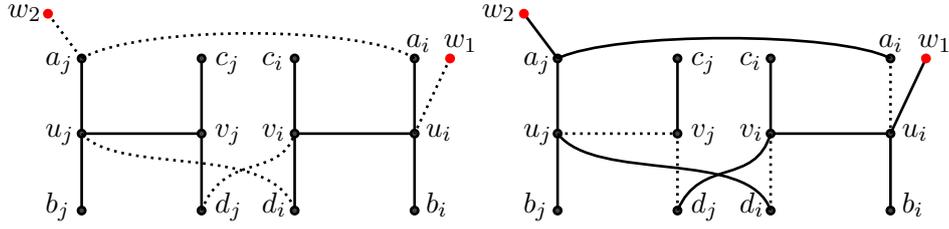

\centering
\tikzset{every picture/.style={line width=1pt}} 

\caption{Auxiliary figure for the proof of Claim~\ref{CLAIM:two-edges-x}.} 
\label{Fig:two-edges-rotation-33}
\end{figure}

    \begin{claim}\label{CLAIM:two-edges-x}
        Either $\{v_i c_j, v_i d_j\} \cap G = \emptyset$, or $\{u_j c_i, u_j d_i\} \cap G = \emptyset$, or $\{a_j a_i, a_j b_i\} \cap G = \emptyset$. 
    \end{claim}
    \begin{proof}[Proof of Claim~\ref{CLAIM:two-edges-x}]
        Suppose to the contrary that this claim fails. By symmetry, we may assume that $\{v_i d_j, d_i u_j, a_i a_j\} \subseteq G$. 
        Then $G[W]$ contains two disjoint copies of $H$ and one disjoint edge, which together cover $14$ vertices (see Figure~\ref{Fig:two-edges-rotation-33}). By~\eqref{equ:w1-w2-choices}, this means that $\{H_i, H_j\}$ is $2$-extendable, which is a contradiction.
    \end{proof} 

\begin{figure}[H]
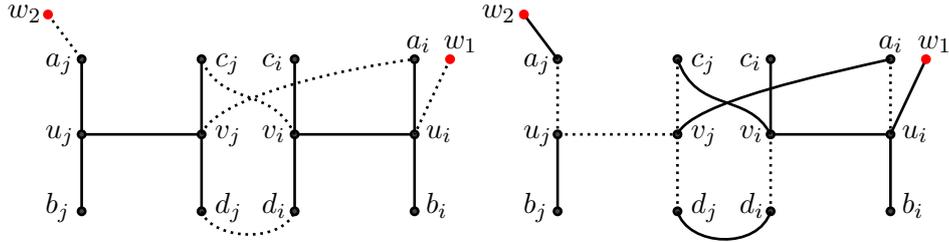

\centering
\tikzset{every picture/.style={line width=1pt}} 

\caption{Auxiliary figure for the proof of Claim~\ref{CLAIM:two-edges-y}.} 
\label{Fig:two-edges-rotation-34}
\end{figure}

    \begin{claim}\label{CLAIM:two-edges-y}
        The following statements hold. 
        \begin{enumerate}[label=(\roman*)]
            \item\label{CLAIM:two-edges-y-1} Either $v_i c_j \not\in G$, or $\{c_i d_j, d_id_j\} \cap G = \emptyset$, or $\{v_j v_i, a_j b_i\} \cap G = \emptyset$.
            \item\label{CLAIM:two-edges-y-2} Either $v_i d_j \not\in G$, or $\{c_ic_j, d_ic_j\} \cap G = \emptyset$, or $\{v_j v_i, a_j b_i\} \cap G = \emptyset$.
        \end{enumerate} 
    \end{claim}
    \begin{proof}[Proof of Claim~\ref{CLAIM:two-edges-y}]
        By symmetry, it suffices to prove~\ref{CLAIM:two-edges-y-1}. 
        Suppose to the contrary that this claim fails. By symmetry, we may assume that $\{v_i c_j, d_i d_j, a_i v_j\} \subseteq G$. 
        Then $G[W]$ contains a copy of $H$ and four disjoint edges, which together cover $14$ vertices (see Figure~\ref{Fig:two-edges-rotation-34}). By~\eqref{equ:w1-w2-choices}, this means that $\{H_i, H_j\}$ is $2$-extendable, which is a contradiction. 
    \end{proof} 

\begin{figure}[H]
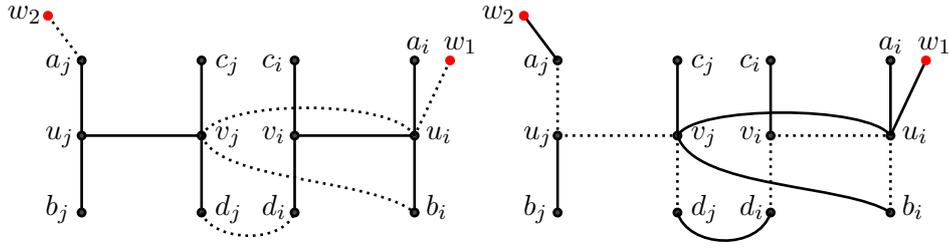

\centering
\tikzset{every picture/.style={line width=1pt}} 

\caption{Auxiliary figure for the proof of Claim~\ref{CLAIM:two-edges-z}.} 
\label{Fig:two-edges-rotation-35}
\end{figure}

    \begin{claim}\label{CLAIM:two-edges-z}
        Either $u_i v_j \not\in G$, or $\{a_i v_j, b_i v_j\} \cap G = \emptyset$, or no pair in $\{c_i, d_i\} \times \{c_j, d_j\}$ is an edge in $G$. 
    \end{claim}
    \begin{proof}[Proof of Claim~\ref{CLAIM:two-edges-z}]
        Suppose to the contrary that this claim fails. By symmetry, we may assume that $\{u_i v_j, b_i v_j, d_i d_j\} \subseteq G$. 
        Then $G[W]$ contains a copy of $H$ and four disjoint edges, which together cover $14$ vertices (see Figure~\ref{Fig:two-edges-rotation-35}). By~\eqref{equ:w1-w2-choices}, this means that $\{H_i, H_j\}$ is $2$-extendable, which is a contradiction.
    \end{proof} 

\begin{figure}[H]
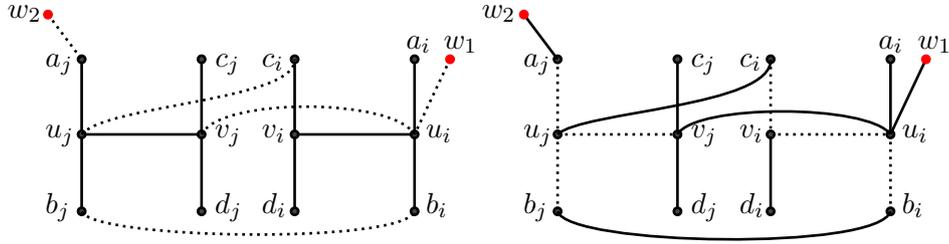

\centering
\tikzset{every picture/.style={line width=1pt}} 

\caption{Auxiliary figure for the proof of Claim~\ref{CLAIM:two-edges-A}.} 
\label{Fig:two-edges-rotation-36}
\end{figure}

    \begin{claim}\label{CLAIM:two-edges-A}
        Either $u_i v_j \not\in G$, or $\{c_i u_j, d_i u_j\} \cap G = \emptyset$, or $\{a_i b_j, b_i b_j\} \cap G = \emptyset$. 
    \end{claim}
    \begin{proof}[Proof of Claim~\ref{CLAIM:two-edges-A}]
        Suppose to the contrary that this claim fails. By symmetry, we may assume that $\{u_i v_j, c_i u_j, b_i b_j\} \subseteq G$. 
        Then $G[W]$ contains a copy of $H$ and four disjoint edges, which together cover $14$ vertices (see Figure~\ref{Fig:two-edges-rotation-36}). By~\eqref{equ:w1-w2-choices}, this means that $\{H_i, H_j\}$ is $2$-extendable, which is a contradiction. 
    \end{proof} 

\begin{figure}[H]
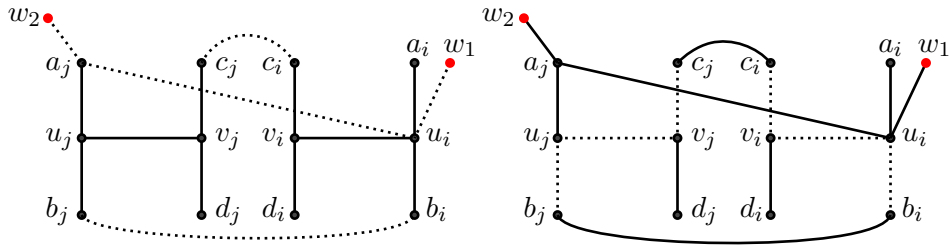

\centering
\tikzset{every picture/.style={line width=1pt}} 

\caption{Auxiliary figure for the proof of Claim~\ref{CLAIM:two-edges-B}.} 
\label{Fig:two-edges-rotation-37}
\end{figure}

    \begin{claim}\label{CLAIM:two-edges-B}
        Either $u_i a_j \not\in G$, or $\{a_i b_j, b_i b_j\} \cap G = \emptyset$, no pair in $\{c_i, d_i\} \times \{c_j, d_j\}$ is an edge in $G$.
    \end{claim}
    \begin{proof}[Proof of Claim~\ref{CLAIM:two-edges-B}]
        Suppose to the contrary that this claim fails. By symmetry, we may assume that $\{u_i a_j, c_i c_j, b_i b_j\} \subseteq G$. 
        Then $G[W]$ contains a copy of $H$ and four disjoint edges, which together cover $14$ vertices (see Figure~\ref{Fig:two-edges-rotation-37}). By~\eqref{equ:w1-w2-choices}, this means that $\{H_i, H_j\}$ is $2$-extendable, which is a contradiction. 
    \end{proof} 

\begin{figure}[H]
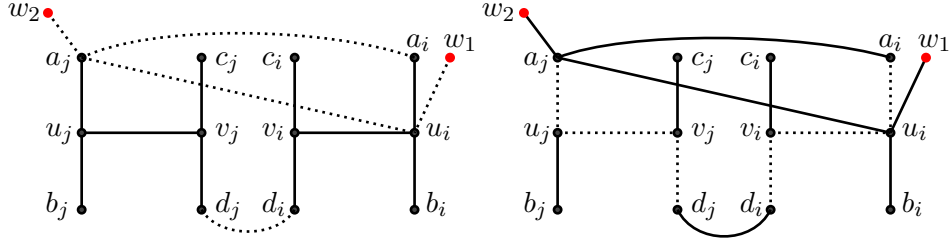

\centering
\tikzset{every picture/.style={line width=1pt}} 

\caption{Auxiliary figure for the proof of Claim~\ref{CLAIM:two-edges-C}.} 
\label{Fig:two-edges-rotation-38}
\end{figure}

    \begin{claim}\label{CLAIM:two-edges-C}
        Either $u_i a_j \not\in G$, or $\{a_i a_j, b_i a_j\} \cap G = \emptyset$, or no pair in $\{c_i, d_i\} \times \{c_j, d_j\}$ is an edge in $G$. 
    \end{claim}
    \begin{proof}[Proof of Claim~\ref{CLAIM:two-edges-C}]
        Suppose to the contrary that this claim fails. By symmetry, we may assume that $\{u_i a_j, d_i d_j, a_i a_j\} \subseteq G$. 
        Then $G[W]$ contains a copy of $H$ and four disjoint edges, which together cover $14$ vertices (see Figure~\ref{Fig:two-edges-rotation-38}). By~\eqref{equ:w1-w2-choices}, this means that $\{H_i, H_j\}$ is $2$-extendable, which is a contradiction. 
    \end{proof} 

\begin{figure}[H]
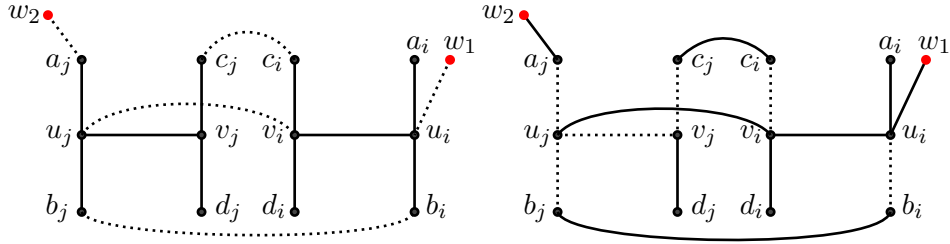

\centering
\tikzset{every picture/.style={line width=1pt}} 

\caption{Auxiliary figure for the proof of Claim~\ref{CLAIM:two-edges-D}.} 
\label{Fig:two-edges-rotation-39}
\end{figure}

    \begin{claim}\label{CLAIM:two-edges-D}
        Either $v_i u_j \not\in G$, or $\{a_i b_j, b_i b_j\} \cap G = \emptyset$, or no pair in $\{c_i, d_i\} \times \{c_j, d_j\}$ is an edge in $G$. 
    \end{claim}
    \begin{proof}[Proof of Claim~\ref{CLAIM:two-edges-D}]
        Suppose to the contrary that this claim fails. By symmetry, we may assume that $\{v_i u_j, c_i c_j, b_i b_j\} \subseteq G$. 
        Then $G[W]$ contains a copy of $H$ and four disjoint edges, which together cover $14$ vertices (see Figure~\ref{Fig:two-edges-rotation-39}). By~\eqref{equ:w1-w2-choices}, this means that $\{H_i, H_j\}$ is $2$-extendable, which is a contradiction. 
    \end{proof} 

    Therefore, by the claims above, we have the following results.

    \begin{claim}\label{CLAIM:two-edges-E}
        The following statements hold. 
        \begin{enumerate}[label=(\roman*)]
            \item\label{CLAIM:two-edges-E-1} $\{v_i c_j, v_i d_j\} \cap G = \emptyset$. 
            \item\label{CLAIM:two-edges-E-2} $u_i v_j \not\in G$. 
            \item\label{CLAIM:two-edges-E-3} $u_i a_j \not\in G$. 
            \item\label{CLAIM:two-edges-E-4} $v_i u_j \not\in G$. 
        \end{enumerate} 
    \end{claim}
    \begin{proof}[Proof of Claim~\ref{CLAIM:two-edges-E}]
        Let us first prove~\ref{CLAIM:two-edges-E-1}. Suppose to the contrary that~\ref{CLAIM:two-edges-E-1} fails. By symmetry, we may assume that $v_i c_j \in G$. Then by applying Claims~\ref{CLAIM:two-edges-w},~\ref{CLAIM:two-edges-x},~\ref{CLAIM:two-edges-y},~\ref{CLAIM:two-edges-B}, and~\ref{CLAIM:two-edges-D}, one could verify (albeit somewhat tediously) that there would be at least $12 > 36 - e(H_i, H_j)$ missing edges, which is a contradiction.   

        Suppose to the contrary that $u_i v_j \in G$. Then by applying Claims~\ref{CLAIM:two-edges-w},~\ref{CLAIM:two-edges-z},~\ref{CLAIM:two-edges-A}, and Claim~\ref{CLAIM:two-edges-E}~\ref{CLAIM:two-edges-E-1}, one could verify  that there would be at least $12$ missing edges, which is a contradiction. 

        Suppose to the contrary that $u_i a_j \in G$. Then by applying Claims~\ref{CLAIM:two-edges-w},~\ref{CLAIM:two-edges-B},~\ref{CLAIM:two-edges-C}, and Claim~\ref{CLAIM:two-edges-E}~\ref{CLAIM:two-edges-E-1}, one could verify  that there would be at least $12$ missing edges, which is a contradiction.

        Suppose to the contrary that $v_i u_j \in G$. Then by applying Claims~\ref{CLAIM:two-edges-w}~\ref{CLAIM:two-edges-D}, and Claim~\ref{CLAIM:two-edges-E}~\ref{CLAIM:two-edges-E-1} \ref{CLAIM:two-edges-E-2} \ref{CLAIM:two-edges-E-3}, one could verify  that there would be at least $12$ missing edges, which is a contradiction. 
    \end{proof} 

\begin{figure}[H]
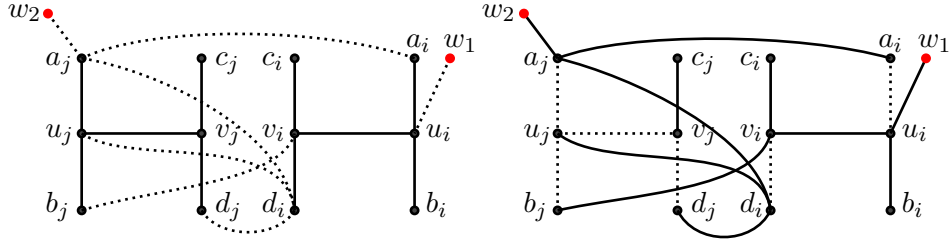

\centering
\tikzset{every picture/.style={line width=1pt}} 

\caption{Decomposition of $W$ into two disjoint copies of $H$ and one edge.} 
\label{Fig:two-edges-rotation-40}
\end{figure}

    Note that Claims~\ref{CLAIM:two-edges-w} and~\ref{CLAIM:two-edges-E} contribute $11 \ge 36 - e(H_i, H_j)$ missing edges. Thus all other pairs in $V(H_i) \times V(H_j)$ are edges in $G$. In particular, $\{v_i b_j, d_i u_j, d_i d_j, d_i a_j, a_i a_j\} \subseteq G$. Then $G[W]$ contains two disjoint copies of $H$ and one disjoint edge, which together cover $14$ vertices (see Figure~\ref{Fig:two-edges-rotation-40}). By~\eqref{equ:w1-w2-choices}, this means that $\{H_i, H_j\}$ is $2$-extendable, which is a contradiction.
    This completes the proof of Lemma~\ref{LEMMA:H1-H1-upper-bound}. 
\end{proof}

\subsection{Upper bound for $e(\mathcal{H}_2, \mathcal{H}_i)$ for $i \in \{0,1,2\}$}\label{SUBSEC:H2-Hi-upper-bound}
\begin{lemma}\label{LEMMA:H2-Hi-upper-bound}
    Suppose that $\{H_i, H_j\} \subseteq \mathcal{H}'$ is a pair such that $|V(H_i) \cap L| \ge 2$. 
    Then $e(H_i, H_j) \le 24$. 
    In particular, $e(\mathcal{H}_0, \mathcal{H}_2) \le 24 y_0 y_2 n^2$ and 
    \begin{align*}
        e(\mathcal{H}_2) 
        \le 24 \binom{y_2 n}{2} + \binom{6}{2}y_2 n
        \le 12 y_2^2 n^2 + 3y_2 n. 
    \end{align*}
\end{lemma}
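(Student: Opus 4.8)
The plan is to prove the key inequality $e(H_i, H_j) \le 24$ by the same rotation strategy used for Lemmas~\ref{LEMMA:H0-H1-upper-bound} and~\ref{LEMMA:H1-H1-upper-bound}, and then to read off the two global estimates by summing over pairs. So suppose for contradiction that $e(H_i, H_j) \ge 25$, i.e. at most $11$ of the $36$ pairs in $V(H_i) \times V(H_j)$ fail to be edges of $G$. First I would pin down the possibilities for $V(H_i) \cap L$: by the relabelling assumption~\eqref{equ:assum-ci-di-not-in-L} we have $\{c_i, d_i\} \cap L = \emptyset$, and by Lemma~\ref{LEMMA:H-R-at-most-3-edges}~\ref{LEMMA:H-R-at-most-3-edges-2} either $u_i \notin L$ or $\{a_i, b_i\} \cap L = \emptyset$; since $|V(H_i) \cap L| \ge 2$, this forces $V(H_i) \cap L$ to be one of
\begin{align*}
    \{u_i, v_i\}, \qquad \{a_i, b_i\}, \qquad \{v_i, a_i\}\ (\text{or } \{v_i, b_i\}), \qquad \{v_i, a_i, b_i\}.
\end{align*}

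Next, exactly as in the proof of Lemma~\ref{LEMMA:H1-H1-upper-bound}, I would fix two distinct vertices $x_1, x_2 \in V(H_i) \cap L$ and an arbitrary pair of distinct vertices $w_1 \in N_G(x_1, \overline{U})$, $w_2 \in N_G(x_2, \overline{U})$; the number of such unordered pairs $\{w_1, w_2\}$ is at least $\tfrac12 \cdot 19\delta^{1/6}n \cdot (19\delta^{1/6}n - 1) \ge 19\delta n^2$. Writing $W := V(H_i) \cup V(H_j) \cup \{w_1, w_2\}$, a $14$-vertex set, the goal in each case is to exhibit a $\{K_2, H, \hat{H}\}$-tiling of $G[W]$ covering at least $13$ vertices of $W$: such a tiling witnesses that $\{H_i, H_j\}$ is $S$-extendable for at least $19\delta n^2$ sets $S$ of size $2$, hence $2$-extendable, contradicting the maximality of $\mathcal{M}$ (no pair of $\mathcal{H}'$ is $\ell$-extendable for any $\ell$). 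The desired tilings have the same shapes seen throughout Section~\ref{SUBSEC:H1-upper-bound} — two disjoint copies of $H$ and one disjoint edge, one copy of $H$ and four disjoint edges, seven disjoint edges, or a copy of $\hat{H}$ together with disjoint edges — with the edges $w_1 x_1$ and $w_2 x_2$ serving to absorb $w_1$ and $w_2$.

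Thus the substance of the argument is a case analysis, branching first over the four possibilities for $\{x_1, x_2\}$ above and then, within each, over the edge pattern of the bipartite graph $G[V(H_i), V(H_j)]$ (exploiting that at most $11$ of its $36$ pairs are missing). This case analysis is the main obstacle: since here both $w_1$ and $w_2$ are attached to the \emph{same} copy $H_i$, the sub-cases genuinely differ from those in Lemma~\ref{LEMMA:H1-H1-upper-bound} and must be re-examined, although many configurations can be reused with minor modification, and auxiliary figures analogous to those in the rest of this section help keep track of the rerouting.

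For the global bounds, every $H_i \in \mathcal{H}_2$ has $|V(H_i) \cap L| = 2$, so $e(H_i, H_j) \le 24$ for every $H_j$ whenever $H_i \in \mathcal{H}_2$. Summing over $H_i \in \mathcal{H}_2$ and $H_j \in \mathcal{H}_0$ yields $e(\mathcal{H}_0, \mathcal{H}_2) \le 24\,|\mathcal{H}_2|\,|\mathcal{H}_0| = 24 y_0 y_2 n^2$. Bounding the edges inside each $H_i \in \mathcal{H}_2$ by $\binom{6}{2} = 15$ and the edges between each pair in $\mathcal{H}_2$ by $24$, we get
\begin{align*}
    e(\mathcal{H}_2) \le 24\binom{y_2 n}{2} + \binom{6}{2} y_2 n = 12 y_2^2 n^2 - 12 y_2 n + 15 y_2 n = 12 y_2^2 n^2 + 3 y_2 n,
\end{align*}
which completes the proof.
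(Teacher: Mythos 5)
Your framework is exactly the paper's: the paper also fixes $\{x,y\}\subseteq V(H_i)\cap L$, observes via Lemma~\ref{LEMMA:H-R-at-most-3-edges} and Assumption~\eqref{equ:assum-ci-di-not-in-L} that $\{x,y\}$ must be one of $\{u_i,v_i\}$, $\{a_i,b_i\}$, $\{v_i,a_i\}$, $\{v_i,b_i\}$, picks $w_1,w_2$ from $N_G(x,\overline U)$ and $N_G(y,\overline U)$ giving at least $19\delta n^2$ choices, and shows that $e(H_i,H_j)\ge 25$ forces a tiling of $G[W]$ covering at least $13$ vertices, contradicting the non-extendability of pairs in $\mathcal H'$. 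Your derivation of the two global counts is also correct. (One small remark: $\{v_i,a_i,b_i\}$ is a possibility for $V(H_i)\cap L$, not for the \emph{pair} $\{x_1,x_2\}$; when $V(H_i)\cap L$ has size three one simply chooses a two-element subset, which falls into one of the other cases. The paper does exactly this.)

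The gap is that you explicitly stop short of the actual content. The claim $e(H_i,H_j)\le 24$ does not follow from the framework alone: in each of the three cases one must exhibit, for every edge pattern of $G[V(H_i),V(H_j)]$ that has fewer than $12$ missing pairs, a concrete decomposition of $G[W]$ into vertex-disjoint copies of $K_2$, $H$, $\hat H$ covering $\ge 13$ vertices; equivalently, one must show that certain edges are forced absent (the paper's Claims~\ref{CLAIM:two-edges-a}--\ref{CLAIM:two-edges-i}), and tally that the forced absences alone already exceed $11$. You correctly note that, because both $w_1$ and $w_2$ attach to $H_i$ here rather than one to each of $H_i$ and $H_j$, the configurations are not the same as in Lemma~\ref{LEMMA:H1-H1-upper-bound} and have to be re-verified; but you do not actually carry out that re-verification. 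Since there is no structural or monotonicity argument that bypasses the case check, the proposal as written is a correct proof skeleton but not a proof: the entire mathematical content of the lemma is the case analysis you defer.
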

\begin{proof}[Proof of Lemma~\ref{LEMMA:H2-Hi-upper-bound}]
    Fix a pair $\{H_i, H_j\} \subseteq \mathcal{H}'$ such that $|V(H_i) \cap L| \ge 2$. 
    Suppose to the contrary that $e(H_i, H_j) \ge 25$. 
    Fix two vertices $\{x,y\} \subseteq V(H_i) \cap L$. 
    Fix an arbitrary pair of distinct vertices $(w_1, w_2) \in N_{G}(x, \overline{U}) \times  N_{G}(y, \overline{U})$. 
    Let $W\coloneqq V(H_i \cup H_j) \cup \{w_1, w_2\}$. 
    Note from the definition of $L$ that the number of choices for such (unordered) pairs $\{w_1, w_2\}$ is at least 
    \begin{align}\label{equ:w-pair-choices-a}
        \frac{1}{2} \cdot 19 \delta^{1/6} n \cdot (19 \delta^{1/6} n -1)
        \ge 19 \delta n^2. 
    \end{align}

   By Lemma~\ref{LEMMA:H-R-at-most-3-edges} and Assumption~\eqref{equ:assum-ci-di-not-in-L}, the pair $\{x,y\}$ must be one of the following: $\{u_i, v_i\}$, $\{a_i, b_i\}$, $\{v_i, a_i\}$, or $\{v_i, b_i\}$. 

    \medskip 

    \textbf{Case 1}: $\{x, y\} = \{u_i, v_i\}$.

\begin{figure}[H]
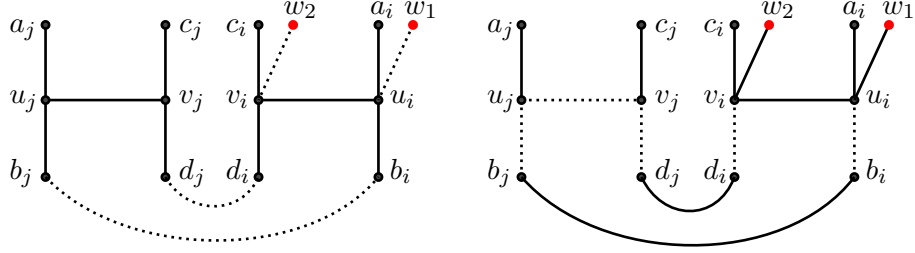

\centering
\tikzset{every picture/.style={line width=1pt}} 

\caption{Auxiliary figure for the proof of Claim~\ref{CLAIM:two-edges-a}.} 
\label{Fig:two-edges-rotation-1}
\end{figure}
    
    \begin{claim}\label{CLAIM:two-edges-a}
        The following statements hold. 
        \begin{enumerate}[label=(\roman*)]
            \item\label{CLAIM:two-edges-a-1} Either no pair in $\{a_i, b_i\} \times \{a_j, b_j\}$ is an edge in $G$, or no pair in $\{c_i, d_i\} \times \{c_j, d_j\}$ is an edge in $G$. 
            \item\label{CLAIM:two-edges-a-2} Either no pair in $\{a_i, b_i\} \times \{c_j, d_j\}$ is an edge in $G$, or no pair in $\{c_i, d_i\} \times \{a_j, b_j\}$ is an edge in $G$. 
        \end{enumerate}  
    \end{claim}
    \begin{proof}[Proof of Claim~\ref{CLAIM:two-edges-a}]
         By symmetry, it suffices to show~\ref{CLAIM:two-edges-a-1}. Suppose to the contrary that~\ref{CLAIM:two-edges-a-1} fails. By symmetry,  we may assume that $\{b_i b_j, d_i d_j\} \subseteq G$. Then $G[W]$ contains a copy of $H$ and four pairwise disjoint edges, which together cover $14$ vertices (see Figure~\ref{Fig:two-edges-rotation-1}).
         By~\eqref{equ:w-pair-choices-a}, this means that $\{H_i, H_j\}$ is $2$-extendable, which is a contradiction. 
    \end{proof}

\begin{figure}[H]
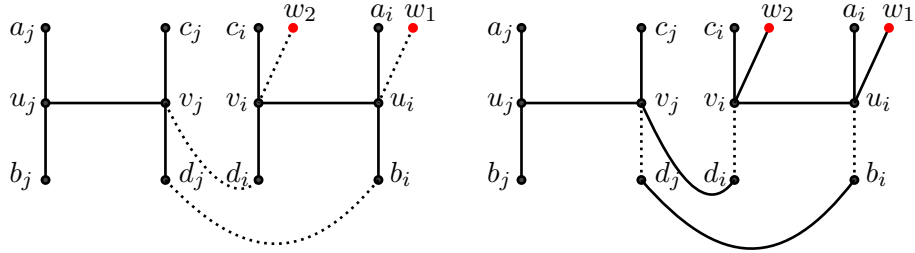

\centering
\tikzset{every picture/.style={line width=1pt}} 

\caption{Auxiliary figure for the proof of Claim~\ref{CLAIM:two-edges-b}.} 
\label{Fig:two-edges-rotation-2}
\end{figure}
    
    \begin{claim}\label{CLAIM:two-edges-b}
        The following statements hold. 
        \begin{enumerate}[label=(\roman*)]
            \item\label{CLAIM:two-edges-b-1} Either no pair in $\{a_i, b_i\} \times \{c_j, d_j\}$ is an edge in $G$, or $\{c_i v_j, d_i v_j\} \cap G = \emptyset$. 
            \item\label{CLAIM:two-edges-b-2} Either no pair in $\{a_i, b_i\} \times \{a_j, b_j\}$ is an edge in $G$, or $\{c_i u_j, d_i u_j\} \cap G = \emptyset$. 
            \item\label{CLAIM:two-edges-b-3} Either $\{a_i u_j, b_i u_j\} \cap G = \emptyset$, or no pair in $\{c_i, d_i\} \times \{c_j, d_j\}$ is an edge in $G$. 
            \item\label{CLAIM:two-edges-b-4} Either $\{a_i v_j, b_i v_j\} \cap G = \emptyset$, or no pair in $\{c_i, d_i\} \times \{a_j, b_j\}$ is an edge in $G$. 
        \end{enumerate}  
    \end{claim}
    \begin{proof}[Proof of Claim~\ref{CLAIM:two-edges-b}]
         By symmetry, it suffices to show~\ref{CLAIM:two-edges-b-1}. Suppose to the contrary that~\ref{CLAIM:two-edges-b-1} fails. By symmetry,  we may assume that $\{b_i d_j, d_i v_j\} \subseteq G$. Then $G[W]$ contains two disjoint copies of $H$ and one disjoint edge, which together cover $14$ vertices (see Figure~\ref{Fig:two-edges-rotation-2}). By~\eqref{equ:w-pair-choices-a}, this means that $\{H_i, H_j\}$ is $2$-extendable, which is a contradiction.  
    \end{proof}

    It follows from Claim~\ref{CLAIM:two-edges-a} and~\ref{CLAIM:two-edges-b} that 
    \begin{align*}
        e(H_i, H_j)
        \le 36 - 4 - 4 - 2 - 2
        = 24 < 25, 
    \end{align*} 
    a contradiction.

    \medskip 

    \textbf{Case 2}: $\{x, y\} = \{a_i, b_i\}$. 
    
    \begin{claim}\label{CLAIM:two-edges-c}
        The following statements hold. 
        \begin{enumerate}[label=(\roman*)]
            \item\label{CLAIM:two-edges-c-1} There is no vertex in $\{u_i, c_i, d_i\}$ that has neighbors in both $\{a_j, b_j\}$ and $\{c_j, d_j\}$. 
            \item\label{CLAIM:two-edges-c-2} There is no matching of size two with one edge belonging to $\{u_i, c_i, d_i\} \times \{c_j, d_j\}$ and another edge belonging to $\{u_i, c_i, d_i\} \times \{a_j, b_j\}$. 
        \end{enumerate}  
        In particular, either no pair in $\{u_i, c_i, d_i\} \times \{a_j, b_j\}$ is an edge in $G$, or no pair in $\{u_i, c_i, d_i\} \times \{c_j, d_j\}$ is an edge in $G$. 
    \end{claim}
    \begin{proof}[Proof of Claim~\ref{CLAIM:two-edges-c}]
\begin{figure}[H]
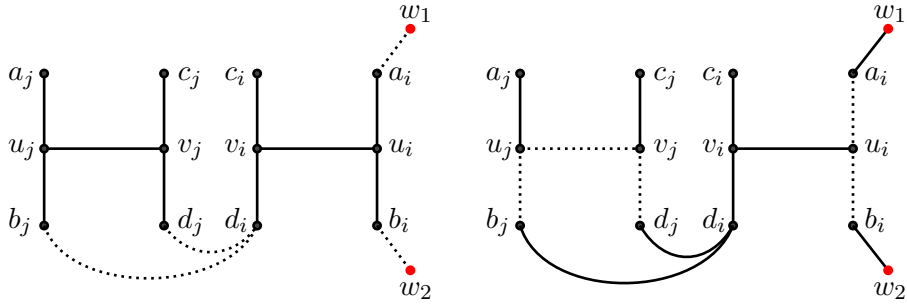

\centering
\tikzset{every picture/.style={line width=1pt}} 

\caption{Auxiliary figure for the proof of Claim~\ref{CLAIM:two-edges-c}~\ref{CLAIM:two-edges-c-1}.} 
\label{Fig:two-edges-rotation-3}
\end{figure}
        
        Let us first prove~\ref{CLAIM:two-edges-c-1}. Suppose to the contrary that~\ref{CLAIM:two-edges-c-1} fails. By symmetry, we may assume that $\{d_i b_j, d_i d_j\} \subseteq G$. Then $G[W]$ contains a copy of $H$ and four pairwise disjoint edges, which together cover $14$ vertices (see Figure~\ref{Fig:two-edges-rotation-3}). By~\eqref{equ:w-pair-choices-a}, this means that $\{H_i, H_j\}$ is $2$-extendable, which is a contradiction. 

\begin{figure}[H]
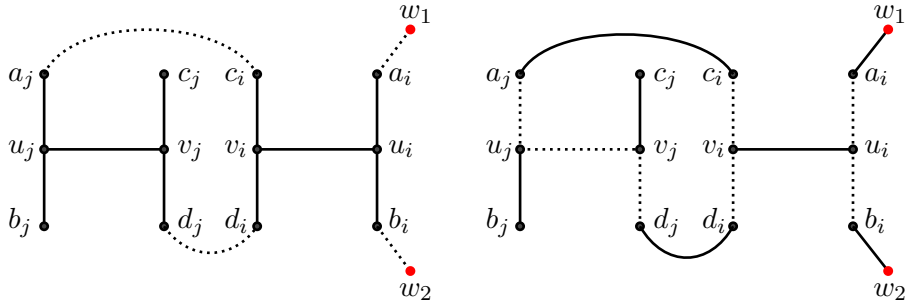

\centering
\tikzset{every picture/.style={line width=1pt}} 

\caption{Auxiliary figure for the proof of Claim~\ref{CLAIM:two-edges-c}~\ref{CLAIM:two-edges-c-2}.} 
\label{Fig:two-edges-rotation-4}
\end{figure}

        Next, we prove~\ref{CLAIM:two-edges-c-2}. Suppose to the contrary that~\ref{CLAIM:two-edges-c-2} fails. By symmetry, we may assume that $\{c_i a_j, d_i d_j\} \subseteq G$. Then $G[W]$ contains seven pairwise disjoint edges, which together cover $14$ vertices (see Figure~\ref{Fig:two-edges-rotation-4}). By~\eqref{equ:w-pair-choices-a}, this means that $\{H_i, H_j\}$ is $2$-extendable, which is a contradiction. 
    \end{proof} 

    By symmetry, we may assume that 
    \begin{align*}
        \{u_i a_j, u_i b_j, c_i a_j, c_i b_j, d_i a_j, d_i b_j\} \cap G = \emptyset. 
    \end{align*}

    \begin{claim}\label{CLAIM:two-edges-d}
        The following statements hold. 
        \begin{enumerate}[label=(\roman*)]
            \item\label{CLAIM:two-edges-d-1} Either $\{a_i u_j, b_i u_j\} \cap G = \emptyset$ or no pair in $\{c_i, d_i\} \times \{c_j, d_j\}$ is an edge in $G$. 
            \item\label{CLAIM:two-edges-d-2} Either $\{a_i u_j, b_i u_j\} \cap G = \emptyset$ or $v_i v_j \not\in G$.
            \item\label{CLAIM:two-edges-d-3} Either $\{a_i v_j, b_i v_j\} \cap G = \emptyset$ or $v_i u_j \not\in G$.
            \item\label{CLAIM:two-edges-d-4} Either $v_i u_j \not\in G$ or no pair in $\{u_i, c_i, d_i\} \times \{c_j, d_j\}$ is an edge in $G$. 
            \item\label{CLAIM:two-edges-d-5} There is no matching of size two with one edge belonging to $\{u_i, c_i, d_i\} \times \{v_j\}$ and the other edge belonging to $\{u_i, c_i, d_i\} \times \{c_j, d_j\}$. 
        \end{enumerate}  
    \end{claim}
    \begin{proof}[Proof of Claim~\ref{CLAIM:two-edges-d}]
\begin{figure}[H]
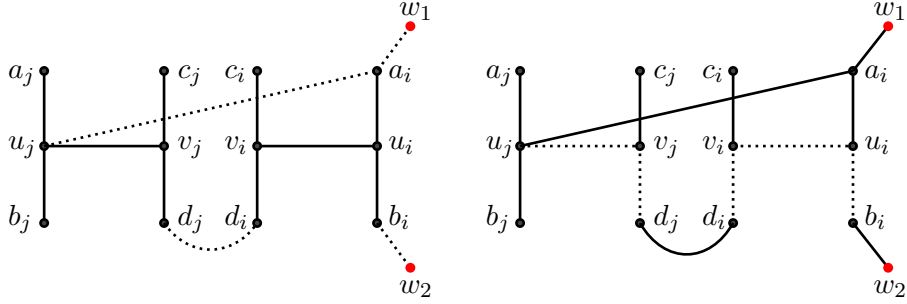

\centering
\tikzset{every picture/.style={line width=1pt}} 

\caption{Auxiliary figure for the proof of Claim~\ref{CLAIM:two-edges-d}~\ref{CLAIM:two-edges-d-1}.} 
\label{Fig:two-edges-rotation-5}
\end{figure}

        Let us first prove~\ref{CLAIM:two-edges-d-1}. Suppose to the contrary that~\ref{CLAIM:two-edges-d-1} fails. By symmetry, we may assume that $\{a_i u_j, d_i d_j\} \subseteq G$. Then $G[W]$ contains a copy of $H$ and four pairwise disjoint edges, which together cover $14$ vertices (see Figure~\ref{Fig:two-edges-rotation-5}). By~\eqref{equ:w-pair-choices-a}, this means that $\{H_i, H_j\}$ is $2$-extendable, which is a contradiction. 

\begin{figure}[H]
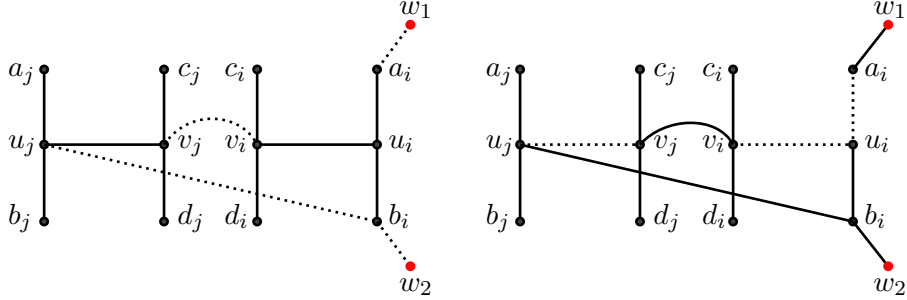

\centering
\tikzset{every picture/.style={line width=1pt}} 

\caption{Auxiliary figure for the proof of Claim~\ref{CLAIM:two-edges-d}~\ref{CLAIM:two-edges-d-2} and~\ref{CLAIM:two-edges-d-3}.} 
\label{Fig:two-edges-rotation-6}
\end{figure}

        Next, we prove~\ref{CLAIM:two-edges-d-2}. Suppose to the contrary that~\ref{CLAIM:two-edges-d-2} fails. By symmetry, we may assume that $\{v_i v_j, b_i u_j\} \subseteq G$. Then $G[W]$ contains two disjoint copies of $H$ and one disjoint edge, which together cover $14$ vertices (see Figure~\ref{Fig:two-edges-rotation-6}). By~\eqref{equ:w-pair-choices-a}, this means that $\{H_i, H_j\}$ is $2$-extendable, which is a contradiction. 
        
        By symmetry,~\ref{CLAIM:two-edges-d-3} holds as well.

\begin{figure}[H]
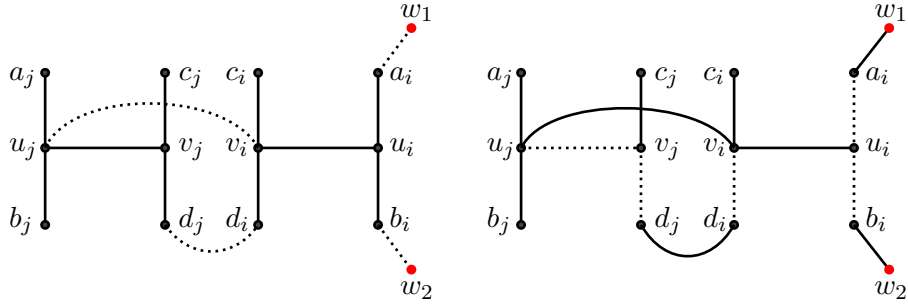

\centering
\tikzset{every picture/.style={line width=1pt}} 

\caption{Auxiliary figure for the proof of Claim~\ref{CLAIM:two-edges-d}~\ref{CLAIM:two-edges-d-4}.} 
\label{Fig:two-edges-rotation-7}
\end{figure}

        Next, we prove~\ref{CLAIM:two-edges-d-4}. Suppose to the contrary that~\ref{CLAIM:two-edges-d-4} fails. By symmetry, we may assume that $\{v_i u_j, d_i d_j\} \subseteq G$. Then $G[W]$ contains a copy of $H$ and four pairwise disjoint edges, which together cover $14$ vertices (see Figure~\ref{Fig:two-edges-rotation-7}). By~\eqref{equ:w-pair-choices-a}, this means that $\{H_i, H_j\}$ is $2$-extendable, which is a contradiction. 

\begin{figure}[H]
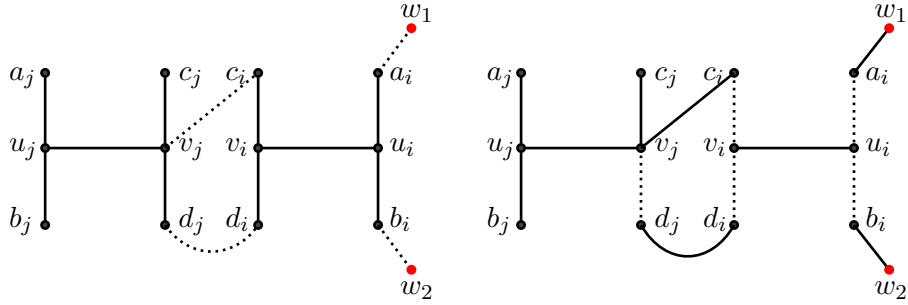

\centering
\tikzset{every picture/.style={line width=1pt}} 

\caption{Auxiliary figure for the proof of Claim~\ref{CLAIM:two-edges-d}~\ref{CLAIM:two-edges-d-5}.} 
\label{Fig:two-edges-rotation-8}
\end{figure}

        Finally, we prove~\ref{CLAIM:two-edges-d-5}. Suppose to the contrary that~\ref{CLAIM:two-edges-d-5} fails. By symmetry, we may assume that $\{c_i v_j, d_i d_j\} \in G$. Then $G[W]$ contains a copy of $H$ and four pairwise disjoint edges, which together cover $14$ vertices (see Figure~\ref{Fig:two-edges-rotation-7}). By~\eqref{equ:w-pair-choices-a}, this means that $\{H_i, H_j\}$ is $2$-extendable, which is a contradiction. 
    \end{proof}

    Note from Claim~\ref{CLAIM:two-edges-d} that there are at least six pairs in $V(H_i) \times V(H_j)$ that do not belong to $G[H_i, H_j]$, aside from $\{u_i a_j, u_i b_j, c_i a_j, c_i b_j, d_i a_j, d_i b_j\}$. Thus we have 
    \begin{align*}
        e(H_i, H_j)
        \le 36 - 6 - 6 
        = 24 < 25, 
    \end{align*} 
    which is a contradiction.

    \medskip 

    \textbf{Case 3}: $\{x, y\} = \{v_i, a_i\}$ or $\{x, y\} = \{v_i, b_i\}$.  

    By symmetry, we may assume that  $\{x, y\} = \{v_i, a_i\}$. 
    
    \begin{claim}\label{CLAIM:two-edges-e}
        The following statements hold. 
        \begin{enumerate}[label=(\roman*)]
            \item\label{CLAIM:two-edges-e-1} There is no vertex in $\{c_i, d_i\}$ that has neighbors in both $\{a_j, b_j\}$ and $\{c_j, d_j\}$. 
            \item\label{CLAIM:two-edges-e-2} There is no matching of size two with one edge belonging to $\{c_i, d_i\} \times \{c_j, d_j\}$ and another edge belonging to $\{c_i, d_i\} \times \{a_j, b_j\}$. 
        \end{enumerate}  
        In particular, either no pair in $\{c_i, d_i\} \times \{a_j, b_j\}$ is an edge in $G$, or no pair in $\{c_i, d_i\} \times \{c_j, d_j\}$ is an edge in $G$. 
    \end{claim}
    \begin{proof}[Proof of Claim~\ref{CLAIM:two-edges-e}]
\begin{figure}[H]
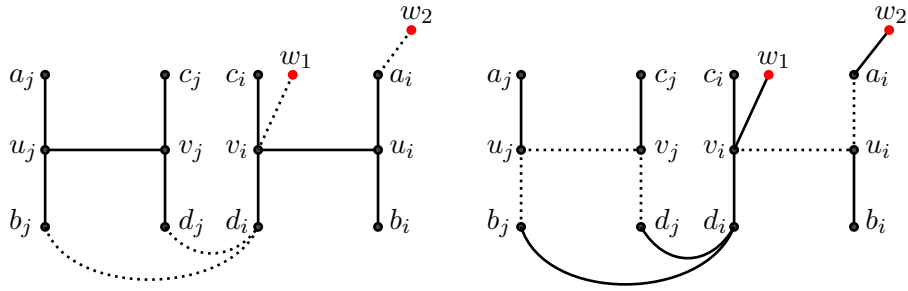

\centering
\tikzset{every picture/.style={line width=1pt}} 

\caption{Auxiliary figure for the proof of Claim~\ref{CLAIM:two-edges-e}~\ref{CLAIM:two-edges-e-1}.} 
\label{Fig:two-edges-rotation-9}
\end{figure}
        
        Let us first prove~\ref{CLAIM:two-edges-e-1}. Suppose to the contrary that~\ref{CLAIM:two-edges-e-1} fails. By symmetry, we may assume that $\{d_i b_j, d_i d_j\} \subseteq G$. Then $G[W]$ contains a copy of $H$ and four pairwise disjoint edges, which together cover $14$ vertices (see Figure~\ref{Fig:two-edges-rotation-9}). By~\eqref{equ:w-pair-choices-a}, this means that $\{H_i, H_j\}$ is $2$-extendable, which is a contradiction.

\begin{figure}[H]
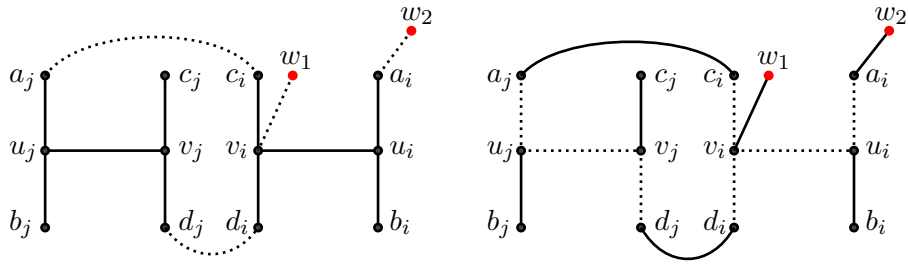

\centering
\tikzset{every picture/.style={line width=1pt}} 

\caption{Auxiliary figure for the proof of Claim~\ref{CLAIM:two-edges-e}~\ref{CLAIM:two-edges-e-2}.} 
\label{Fig:two-edges-rotation-10}
\end{figure}

        Next, we prove~\ref{CLAIM:two-edges-e-2}. Suppose to the contrary that~\ref{CLAIM:two-edges-e-2} fails. By symmetry, we may assume that $\{c_i a_j, d_i d_j\} \subseteq G$. Then $G[W]$ contains seven pairwise disjoint edges, which together cover $14$ vertices (see Figure~\ref{Fig:two-edges-rotation-10}). By~\eqref{equ:w-pair-choices-a}, this means that $\{H_i, H_j\}$ is $2$-extendable, which is a contradiction.
    \end{proof}

    By symmetry, we may assume that 
    \begin{align}\label{equ:assum-CLAIM-two-edges-e}
        \{c_i a_j, c_i b_j, d_i a_j, d_i b_j\} \cap G = \emptyset. 
    \end{align}

\begin{figure}[H]
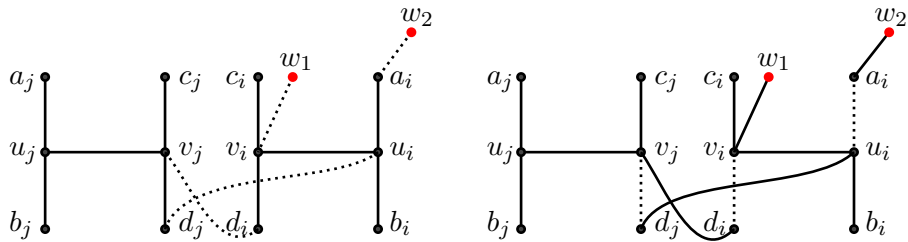

\centering
\tikzset{every picture/.style={line width=1pt}} 

\caption{Auxiliary figure for the proof of Claim~\ref{CLAIM:two-edges-f}.} 
\label{Fig:two-edges-rotation-11}
\end{figure}

    \begin{claim}\label{CLAIM:two-edges-f}
        The following statements hold. 
        \begin{enumerate}[label=(\roman*)]
            \item\label{CLAIM:two-edges-f-1} Either $\{c_i v_j, d_i v_j\} \cap G = \emptyset$ or $\{u_i c_j, u_i d_j\} \cap G = \emptyset$. 
            \item\label{CLAIM:two-edges-f-2} Either $\{c_i u_j, d_i u_j\} \cap G = \emptyset$ or $\{u_i a_j, u_i b_j\} \cap G = \emptyset$.
        \end{enumerate}  
    \end{claim}
    \begin{proof}[Proof of Claim~\ref{CLAIM:two-edges-f}]
        By symmetry, it suffices to show~\ref{CLAIM:two-edges-f-1}. Suppose to the contrary that~\ref{CLAIM:two-edges-f-1} fails. By symmetry, we may assume that $\{d_i v_j, u_i d_j\} \subseteq G$. Then $G[W]$ contains two disjoint copies of $H$ and one disjoint edge, which together cover $14$ vertices (see Figure~\ref{Fig:two-edges-rotation-11}). By~\eqref{equ:w-pair-choices-a}, this means that $\{H_i, H_j\}$ is $2$-extendable, which is a contradiction. 
    \end{proof}

\begin{figure}[H]
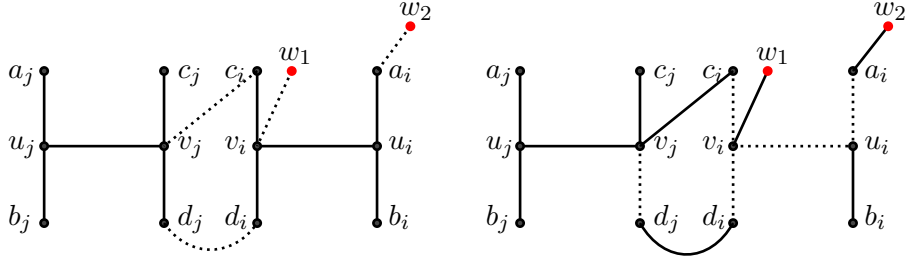

\centering
\tikzset{every picture/.style={line width=1pt}} 

\caption{Auxiliary figure for the proof of Claim~\ref{CLAIM:two-edges-g}.} 
\label{Fig:two-edges-rotation-12}
\end{figure}

    \begin{claim}\label{CLAIM:two-edges-g}
        The following statements hold. 
        \begin{enumerate}[label=(\roman*)]
            \item\label{CLAIM:two-edges-g-1} Either $c_i v_j \not\in G$ or $\{d_i c_j, d_i d_j\} \cap G = \emptyset$. 
            \item\label{CLAIM:two-edges-g-2} Either $d_i v_j \not\in G$ or $\{c_i c_j, c_i d_j\} \cap G = \emptyset$.
        \end{enumerate}  
    \end{claim}
    \begin{proof}[Proof of Claim~\ref{CLAIM:two-edges-g}]
        By symmetry, it suffices to show~\ref{CLAIM:two-edges-g-1}. Suppose to the contrary that~\ref{CLAIM:two-edges-g-1} fails. By symmetry, we may assume that $\{d_i d_j, c_i v_j\} \subseteq G$. Then $G[W]$ contains a copy of $H$ and four pairwise disjoint edges, which together cover $14$ vertices (see Figure~\ref{Fig:two-edges-rotation-12}). By~\eqref{equ:w-pair-choices-a}, this means that $\{H_i, H_j\}$ is $2$-extendable, which is a contradiction.
    \end{proof}

    \begin{claim}\label{CLAIM:two-edges-h}
        The following statements hold. 
        \begin{enumerate}[label=(\roman*)]
            \item\label{CLAIM:two-edges-h-1} $v_i u_j \not\in G$. 
            \item\label{CLAIM:two-edges-h-2} $\{c_i v_j, d_i v_j\} \cap G = \emptyset$.
        \end{enumerate}  
    \end{claim}
    \begin{proof}[Proof of Claim~\ref{CLAIM:two-edges-h}]
\begin{figure}[H]
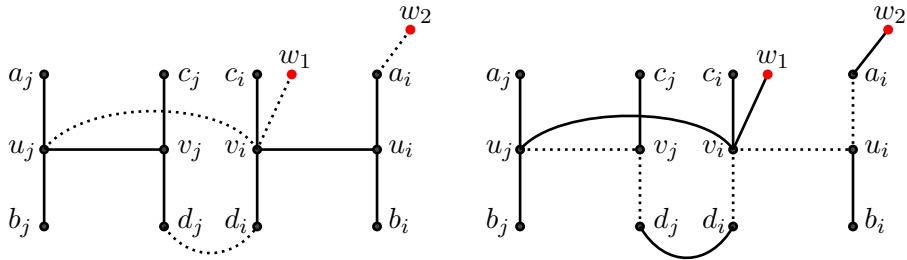

\centering
\tikzset{every picture/.style={line width=1pt}} 

\caption{Auxiliary figure for the proof of Claim~\ref{CLAIM:two-edges-h}~\ref{CLAIM:two-edges-h-1}.} 
\label{Fig:two-edges-rotation-13}
\end{figure}
        
        Let us first prove~\ref{CLAIM:two-edges-h-1}. Note from Claim~\ref{CLAIM:two-edges-f} that there are at least four pairs in $V(H_i) \times V(H_j)$ that do not belong to $G[H_i, H_j]$, aside from $\{c_i a_j, c_i b_j, d_i a_j, d_i b_j\}$. Since $e(H_i, H_j) \ge 25 = 36 - 8 - 3$, there exists an edge in $\{c_i, d_i\} \times \{c_j, d_j\}$. By symmetry, we may assume that $d_i d_j \in G$.
        Suppose to the contrary that~\ref{CLAIM:two-edges-h-1} fails. That is, $v_i u_j \in G$. Then $G[W]$ contains a copy of $H$ and four pairwise disjoint edges, which together cover $14$ vertices (see Figure~\ref{Fig:two-edges-rotation-13}). By~\eqref{equ:w-pair-choices-a}, this means that $\{H_i, H_j\}$ is $2$-extendable, which is a contradiction.

        Suppose to the contrary that~\ref{CLAIM:two-edges-h-2} fails. By symmetry, we may assume that $c_i v_j \in G$. Then, by Claim~\ref{CLAIM:two-edges-f},~\ref{CLAIM:two-edges-g},~\ref{CLAIM:two-edges-h}~\ref{CLAIM:two-edges-h-1}, and~\eqref{equ:assum-CLAIM-two-edges-e}, there are at least $12 > 11 = 36 - 25$ pairs in $V(H_i) \times V(H_j)$ that do not belong to $G[H_i, H_j]$, which is a contradiction.
    \end{proof} 

\begin{figure}[H]
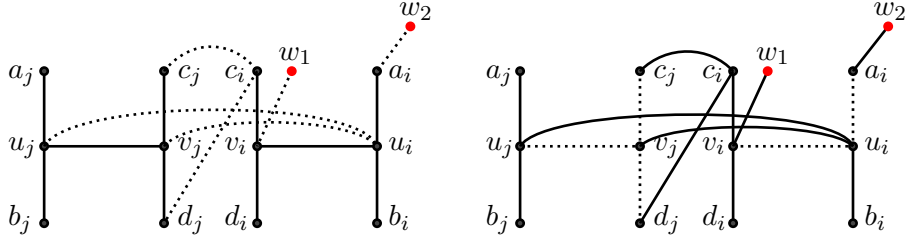

\centering
\tikzset{every picture/.style={line width=1pt}} 

\caption{Auxiliary figure for the proof of Claim~\ref{CLAIM:two-edges-i}.} 
\label{Fig:two-edges-rotation-14}
\end{figure}

    \begin{claim}\label{CLAIM:two-edges-i}
        The following statements hold. 
        \begin{enumerate}[label=(\roman*)]
            \item\label{CLAIM:two-edges-i-1} Either $c_i c_j \not\in G$ or $c_i d_j \not\in G$. 
            \item\label{CLAIM:two-edges-i-2} Either $d_i c_j \not\in G$ or $d_i d_j \not\in G$.
        \end{enumerate}  
    \end{claim}
    \begin{proof}[Proof of Claim~\ref{CLAIM:two-edges-i}]
        
        By symmetry, it suffices to show~\ref{CLAIM:two-edges-i-1}. 
        Note from Claim~\ref{CLAIM:two-edges-f}~\ref{CLAIM:two-edges-f-2},~\ref{CLAIM:two-edges-h}, and~\eqref{equ:assum-CLAIM-two-edges-e}, there are at least $9$ pairs in $V(H_i) \times V(H_j)$ that do not belong to $G[H_i, H_j]$. 
        Therefore, there exists a vertex $z$ in $\{u_i, a_i, b_i\}$ such that $z u_j, z v_j \in G$ since $e(H_i, H_j) \ge 25 = 36 - 9 - 2$. By symmetry, we may assume that $\{u_i u_j, u_i v_j\} \subseteq G$.
        Suppose to the contrary that~\ref{CLAIM:two-edges-i-1} fails. That is, $\{c_i c_j, c_i d_j\} \subseteq G$. 
        Then $G[W]$ contains two disjoint copies of $H$ and one disjoint edge, which together cover $14$ vertices (see Figure~\ref{Fig:two-edges-rotation-14}). By~\eqref{equ:w-pair-choices-a}, this means that $\{H_i, H_j\}$ is $2$-extendable, which is a contradiction.
    \end{proof} 

\begin{figure}[H]
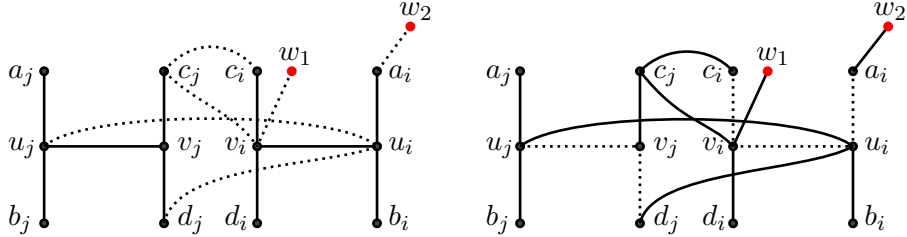

\centering
\tikzset{every picture/.style={line width=1pt}} 

\caption{Decomposition of $W$ into two disjoint copies of $H$ and one edge.} 
\label{Fig:two-edges-rotation-15}
\end{figure}

    By symmetry, (and by Claim~\ref{CLAIM:two-edges-i}~\ref{CLAIM:two-edges-i-1}), we may assume that $c_i d_j \not\in G$. 
    Note from Claim~\ref{CLAIM:two-edges-f}~\ref{CLAIM:two-edges-f-2},~\ref{CLAIM:two-edges-h},~\ref{CLAIM:two-edges-i}~\ref{CLAIM:two-edges-i-2}, and~\eqref{equ:assum-CLAIM-two-edges-e}, there are at least $11 \ge 36 - e(H_i, H_j)$ pairs in $V(H_i) \times V(H_j)$ that do not belong to $G[H_i, H_j]$. 
    Therefore, all other pairs in $V(H_i) \times V(H_j)$ are edges in $G$, In particular, $\{c_i c_j, v_i c_j, u_i u_j, u_i d_j\} \subseteq G$. Then $G[W]$ contains two disjoint copies of $H$ and one disjoint edge, which together cover $14$ vertices (see Figure~\ref{Fig:two-edges-rotation-15}). By~\eqref{equ:w-pair-choices-a}, this means that $\{H_i, H_j\}$ is $2$-extendable, which is a contradiction.
    This completes the proof of Lemma~\ref{LEMMA:H2-Hi-upper-bound}. 
\end{proof}

\subsection{Upper bound for $e(\mathcal{H}_2, \mathcal{H}_3)$}\label{SUBSEC:H2-H3-upper-bound}
\begin{lemma}\label{LEMMA:H2-H3-upper-bound}
    Suppose that $\{H_i, H_j\} \subseteq \mathcal{H}'$ is a pair such that $|V(H_i) \cap L| = 3$ and $|V(H_j) \cap L| \ge 2$. 
    Then $e(H_i, H_j) \le 21$. 
    In particular, $e(\mathcal{H}_2, \mathcal{H}_3) \le 21 y_2 y_3 n^2$. 
\end{lemma}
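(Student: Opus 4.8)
The plan is to follow the rotation framework of Lemmas~\ref{LEMMA:H0-H1-upper-bound}--\ref{LEMMA:H2-Hi-upper-bound}, now exploiting that $H_i$ has three large vertices (and $H_j$ at least two). First I would record the structural consequences: by Lemma~\ref{LEMMA:H-R-at-most-3-edges} together with the relabeling~\eqref{equ:assum-ci-di-not-in-L}, the hypothesis $|V(H_i)\cap L|=3$ forces $V(H_i)\cap L=\{v_i,a_i,b_i\}$, while $|V(H_j)\cap L|\ge 2$ leaves only the possibilities $V(H_j)\cap L=\{v_j,a_j,b_j\}$ or $V(H_j)\cap L\in\{\{u_j,v_j\},\{v_j,a_j\},\{v_j,b_j\},\{a_j,b_j\}\}$. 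Then I would suppose for contradiction that $e(H_i,H_j)\ge 22$, i.e.\ at most $14$ of the $36$ pairs in $V(H_i)\times V(H_j)$ are non-edges, and aim to produce at least $15$ non-edges.

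Next I would set up the auxiliary vertices. Pick pairwise distinct $w_1\in N_G(v_i,\overline U)$, $w_2\in N_G(a_i,\overline U)$, $w_3\in N_G(b_i,\overline U)$; since $v_i,a_i,b_i\in L$ the number of such triples, regarded as $3$-sets, is at least $19\delta n^3$. Set $W:=V(H_i)\cup V(H_j)\cup\{w_1,w_2,w_3\}$, so $|W|=15$. As $\{H_i,H_j\}$ is not $\ell$-extendable for any $\ell$, it suffices to show that for every admissible edge pattern with $e(H_i,H_j)\ge 22$ the induced subgraph $G[W]$ contains a $\{K_2,H,\hat H\}$-tiling covering at least $13$ vertices, since this would make $\{H_i,H_j\}$ $3$-extendable. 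In sub-cases where a rotation naturally wants one anchor from $H_i$ and two from $H_j$ (or four anchors), I would instead draw the $w_k$'s from the large vertices of $H_i$ and $H_j$ and use the corresponding $\ell\le 4$; this is exactly where the $|V(H_j)\cap L|\ge 2$ hypothesis is needed.

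The heart of the proof is a sequence of claims, each asserting that a small set of pairs in $V(H_i)\times V(H_j)$ cannot all be edges, proved by exhibiting an appropriate $\{K_2,H,\hat H\}$-tiling of $G[W]$ covering at least $13$ vertices of $W$ (typically $14$). The building blocks are: the edges $\{w_2,a_i\}$ and $\{w_3,b_i\}$ are always available, which detaches the leaves of $u_i$; the edge $\{w_1,v_i\}$ together with one of $v_ic_i,v_id_i$ lets $v_i$ absorb $w_1$ and free up $c_i$ or $d_i$; and the spine $u_iv_i$, or the vertices $u_i,c_i,d_i$ individually, can be completed to copies of $H$ or $\hat H$ by borrowing one to three vertices of $H_j$ across $G[H_i,H_j]$, in the same spirit as the figures already drawn for the earlier lemmas. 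Carrying out these claims and combining the resulting constraints should force at least $15$ non-edges, contradicting $e(H_i,H_j)\ge 22$ and proving $e(H_i,H_j)\le 21$. The ``in particular'' statement then follows since $e(\mathcal H_2,\mathcal H_3)=\sum_{H_i\in\mathcal H_3,\,H_j\in\mathcal H_2}e(H_i,H_j)\le 21\,|\mathcal H_2|\,|\mathcal H_3|=21y_2y_3n^2$.

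I expect the main obstacle to be the bookkeeping rather than any single rotation. With three large vertices on the $H_i$ side and at least two on the $H_j$ side there are substantially more edge patterns to rule out than in Lemma~\ref{LEMMA:H2-Hi-upper-bound}, and the rotations are constrained by the fact that each $w_k$ has only its single guaranteed edge to $H_i$, so every tiling must either route $w_k$ through that edge or leave it among the (at most two) uncovered vertices of $W$. Organizing the claims so that they jointly exclude every configuration with $e(H_i,H_j)\ge 22$—and switching to anchors taken from $H_j$'s large vertices in the case where the edges of $G[H_i,H_j]$ incident to $\{u_i,c_i,d_i\}$ are too sparse—will be the delicate part, though each individual rotation is routine and parallels arguments already present in the paper.
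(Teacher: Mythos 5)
Your framework is right — the same rotation/extendability scheme as Lemmas~\ref{LEMMA:H0-H1-upper-bound}--\ref{LEMMA:H2-Hi-upper-bound}, a case split according to $V(H_j)\cap L$, and claims that each bad edge pattern would make $\{H_i,H_j\}$ extendable — but there is a concrete gap in your choice of auxiliary vertex set, and it is not merely bookkeeping.

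You default to three anchors $w_1,w_2,w_3$ drawn only from $N_G(v_i,\overline U)$, $N_G(a_i,\overline U)$, $N_G(b_i,\overline U)$, giving $|W|=15$, and treat extra anchors as an occasional fallback with $\ell\le 4$. This does not get off the ground. With $w_1v_i,\,w_2a_i,\,w_3b_i$ in the tiling, the three $H_i$-neighbours of $u_i,c_i,d_i$ are already consumed, so $u_i,c_i,d_i$ can only be covered via $G[H_i,H_j]$; in particular if $H_j$ is kept intact as a copy of $H$ none of them can be covered, and the tiling hits only $12$ of the $15$ vertices of $W$, short of the threshold $13$. The paper avoids exactly this problem by always using a $5$-tuple $(w_1,\dots,w_5)$ with $w_4\in N_G(x,\overline U)$, $w_5\in N_G(y,\overline U)$ for $\{x,y\}\subseteq V(H_j)\cap L$ — this is precisely where $|V(H_j)\cap L|\ge 2$ enters. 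With $|W|=17$ the paper is allowed $4$ uncovered vertices, and the rotations in Claims~\ref{CLAIM:five-edges-a}--\ref{CLAIM:five-edges-h} crucially use $w_4,w_5$ to \emph{rebuild} a copy of $H$ whose leaves are $w_4,w_5$ rather than two vertices of $H_j$, thereby releasing two vertices of $H_j$ to absorb one of $u_i,c_i,d_i$ via a cross edge. With only $3$ (or even $4$) anchors that move is unavailable, and your plan as stated would fail on the very first claim in each case. The fix is to commit to the full $5$-tuple from the start (so $\ell=5$ throughout, not $\ell\le 4$), after which the case analysis proceeds essentially as in the paper.
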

\begin{proof}[Proof of Lemma~\ref{LEMMA:H2-H3-upper-bound}]
    Fix a pair $\{H_i, H_j\} \subseteq \mathcal{H}'$  such that $|V(H_i) \cap L| = 3$ and $|V(H_j) \cap L| \ge 2$.
    Suppose to the contrary that $e(H_i, H_j) \ge 22$. 
    By Lemma~\ref{LEMMA:H-R-at-most-3-edges} and Assumption~\eqref{equ:assum-ci-di-not-in-L}, we have $V(H_i) \cap L = \{v_i, a_i, b_i\}$.
    Fix two vertices $\{x, y\} \subseteq V(H_j) \cap L$. 
    Fix an arbitrary $5$-tuple 
    \begin{align*}  
        (w_1, \ldots, w_5) \in 
        N_{G}(v_i, \overline{U}) \times N_{G}(a_i, \overline{U}) \times N_{G}(b_i, \overline{U}) \times N_{G}(x, \overline{U}) \times N_{G}(y, \overline{U}). 
    \end{align*}
    Let $W\coloneqq V(H_i \cup H_j) \cup \{w_1, \ldots, w_5\}$. 
    Note from the definition of $L$ that the number of choices for such (unordered) $5$-sets $\{w_1, \ldots, w_5\}$ is at least 
    \begin{align}\label{equ:w-5-tuple-choices-a}
        \frac{1}{5!} \cdot 19 \delta^{1/6} n \cdot (19 \delta^{1/6} n) \cdots (19 \delta^{1/6} n - 4)
        \ge 19 \delta n^5. 
    \end{align}

    By Lemma~\ref{LEMMA:H-R-at-most-3-edges} and Assumption~\eqref{equ:assum-ci-di-not-in-L}, the pair $\{x,y\}$ must be one of the following: $\{u_j, v_j\}$, $\{a_j, b_j\}$, $\{v_j, a_j\}$, or $\{v_j, b_j\}$.  
    
    \medskip 

    \textbf{Case 1}: $\{x, y\} = \{u_j, v_j\}$. 

\begin{figure}[H]
\centering
\tikzset{every picture/.style={line width=1pt}} 

\caption{Auxiliary figure for the proof of Claim~\ref{CLAIM:five-edges-a}.} 
\label{Fig:five-edges-rotation-1}
\end{figure}
    
    \begin{claim}\label{CLAIM:five-edges-a}
        No pair in $\{u_i, c_i, d_i\} \times \{a_j, b_j, c_j, d_j\}$ is an edge in $G$. 
    \end{claim}
    \begin{proof}[Proof of Claim~\ref{CLAIM:five-edges-a}]
        Suppose to the contrary that this claim fails. By symmetry, we may assume that $d_id_j \in G$. Then $G[W]$ contains a copy of $H$ and four pairwise disjoint edges, which together cover $14$ vertices (see Figure~\ref{Fig:five-edges-rotation-1}). By~\eqref{equ:w-5-tuple-choices-a}, this means that $\{H_i, H_j\}$ is $5$-extendable, which is a contradiction. 
    \end{proof} 

\begin{figure}[H]
\centering
\tikzset{every picture/.style={line width=1pt}} 

\caption{Auxiliary figure for the proof of Claim~\ref{CLAIM:five-edges-a}.} 
\label{Fig:five-edges-rotation-2}
\end{figure}
    
    \begin{claim}\label{CLAIM:five-edges-b}
        The following statements hold. 
        \begin{enumerate}[label=(\roman*)]
            \item\label{CLAIM:five-edges-b-1} Either $a_i u_j \not\in G$ or $v_i v_j \not\in G$.
            \item\label{CLAIM:five-edges-b-2} Either $b_i u_j \not\in G$ or $v_i v_j \not\in G$.
            \item\label{CLAIM:five-edges-b-3} Either $a_i v_j \not\in G$ or $v_i u_j \not\in G$.
            \item\label{CLAIM:five-edges-b-4} Either $b_i v_j \not\in G$ or $v_i u_j \not\in G$.
        \end{enumerate} 
    \end{claim}
    \begin{proof}[Proof of Claim~\ref{CLAIM:five-edges-b}]
        By symmetry, it suffices to show~\ref{CLAIM:five-edges-b-1}. Suppose to the contrary that~\ref{CLAIM:five-edges-b-1} fails, that is, $\{a_i u_j, v_i v_j\} \in G$. Then $G[W]$ contains two disjoint copies of $H$ and one disjoint edge, which together cover $14$ vertices (see Figure~\ref{Fig:five-edges-rotation-2}). By~\eqref{equ:w-5-tuple-choices-a}, this means that $\{H_i, H_j\}$ is $5$-extendable, which is a contradiction.
    \end{proof} 

    \begin{claim}\label{CLAIM:five-edges-c}
         We have $\{v_i u_j, v_i v_j\} \cap G = \emptyset$. 
    \end{claim}
    \begin{proof}[Proof of Claim~\ref{CLAIM:five-edges-c}]
        By symmetry, it suffices to show $v_i u_j \not\in G$. Suppose to the contrary that $v_i u_j \in G$. Then by Claim~\ref{CLAIM:five-edges-a} and~\ref{CLAIM:five-edges-b}, there would be at least $15 > 36 - e(H_i, H_j)$ pairs in $V(H_i) \times V(H_j)$ that do not belong to $G[H_i, H_j]$, which is a contradiction. 
    \end{proof} 

\begin{figure}[H]
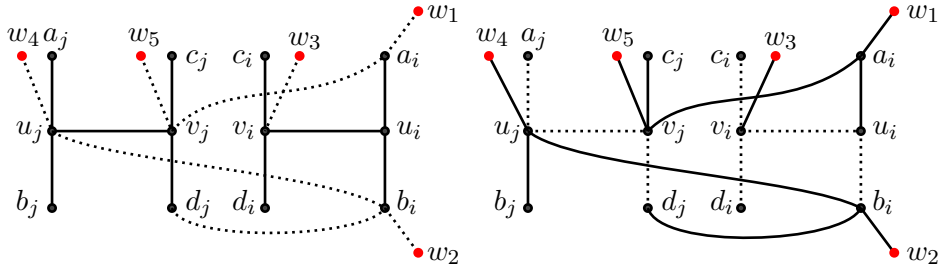

\centering
\tikzset{every picture/.style={line width=1pt}} 

\caption{Decomposition of $W$ into two disjoint copies of $H$ and one edge.} 
\label{Fig:five-edges-rotation-3}
\end{figure}

    Note from Claim~\ref{CLAIM:five-edges-a} and~\ref{CLAIM:five-edges-c}, there are already $14 = 36 - e(H_i, H_j)$ pairs in $V(H_i) \times V(H_j)$ that belong to $G[H_i, H_j]$. Therefore, all other pairs in $V(H_i) \times V(H_j)$ are edges in $G$. In particular, $\{a_i v_j, b_i u_j, b_i d_j\} \subseteq G$. Then $G[W]$ contains two disjoint copies of $H$ and one disjoint edge, which together cover $14$ vertices (see Figure~\ref{Fig:five-edges-rotation-3}). By~\eqref{equ:w-5-tuple-choices-a}, this means that $\{H_i, H_j\}$ is $5$-extendable, which is a contradiction.

    \medskip 

    \textbf{Case 2}: $\{x, y\} = \{a_j, b_j\}$. 

\begin{figure}[H]
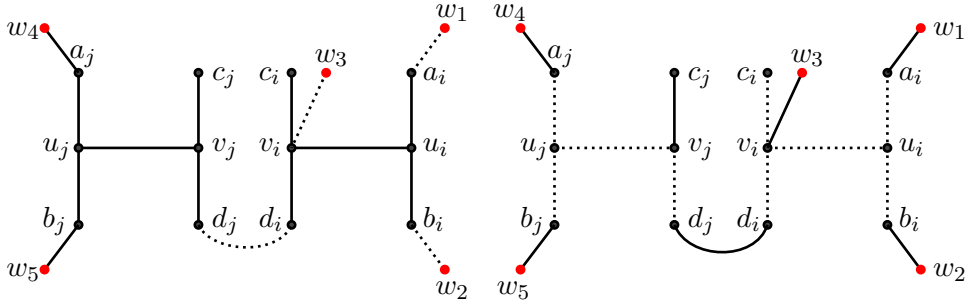

\centering
\tikzset{every picture/.style={line width=1pt}} 

\caption{Auxiliary figure for the proof of Claim~\ref{CLAIM:five-edges-d}.} 
\label{Fig:five-edges-rotation-4}
\end{figure}
    
    \begin{claim}\label{CLAIM:five-edges-d}
        No pair in $\{u_i, c_i, d_i\} \times \{u_j, c_j, d_j\}$ is an edge in $G$. 
    \end{claim}
    \begin{proof}[Proof of Claim~\ref{CLAIM:five-edges-d}]
        Observe that $G[W]$ already contains five pairwise disjoint edges 
        \begin{align}\label{Equ:five-edge-rotation}
            a_iw_1, b_iw_2, v_iw_3, a_jw_4, b_jw_5. 
        \end{align}
        Since $\{u_j, c_j, d_j\}$ is a subset of the neighbor of $v_j$ and every pair in $\{u_i, c_i, d_i\} \times \{u_j, c_j, d_j\}$ is vertex-disjoint from edges in~\eqref{Equ:five-edge-rotation}, if this claim fails, then $G[W]$ would contain seven pairwise disjoint edges, which together cover $14$ vertices (see Figure~\ref{Fig:five-edges-rotation-4}). By~\eqref{equ:w-5-tuple-choices-a}, this means that $\{H_i, H_j\}$ is $5$-extendable, which is a contradiction.
    \end{proof} 

\begin{figure}[H]
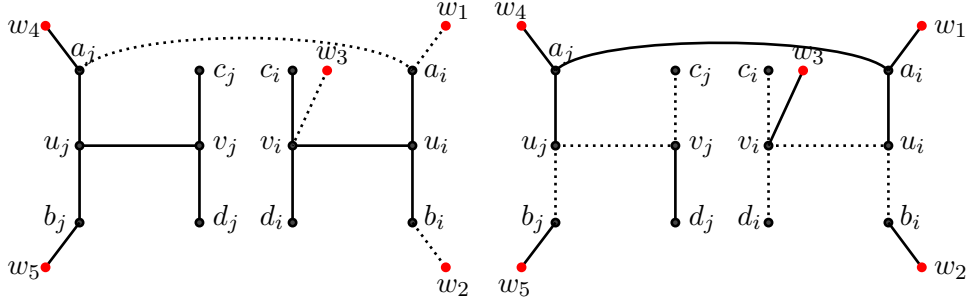

\centering
\tikzset{every picture/.style={line width=1pt}} 

\caption{Auxiliary figure for the proof of Claim~\ref{CLAIM:five-edges-e}.} 
\label{Fig:five-edges-rotation-5}
\end{figure}
    
    \begin{claim}\label{CLAIM:five-edges-e}
        No pair in $\{a_i, b_i\} \times \{a_j, b_j\}$ is an edge in $G$. 
    \end{claim}
    \begin{proof}[Proof of Claim~\ref{CLAIM:five-edges-e}]
        Suppose to the contrary that this claim fails. By symmetry, we may assume that $a_i a_j \in G$. Then $G[W]$ would contain a copy of $H$ and four pairwise disjoint edges, which together cover $14$ vertices (see Figure~\ref{Fig:five-edges-rotation-5}). By~\eqref{equ:w-5-tuple-choices-a}, this means that $\{H_i, H_j\}$ is $5$-extendable, which is a contradiction.
    \end{proof} 

\begin{figure}[H]
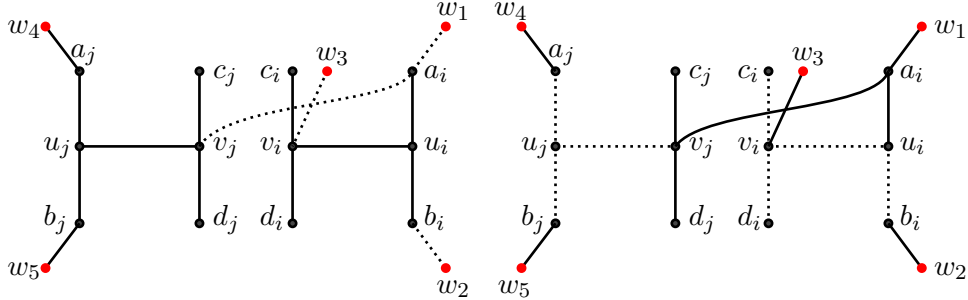

\centering
\tikzset{every picture/.style={line width=1pt}} 

\caption{Auxiliary figure for the proof of Claim~\ref{CLAIM:five-edges-f}.} 
\label{Fig:five-edges-rotation-6}
\end{figure}
    
    \begin{claim}\label{CLAIM:five-edges-f}
        We have $\{a_i v_j, b_i v_j\} \cap G = \emptyset$. 
    \end{claim}
    \begin{proof}[Proof of Claim~\ref{CLAIM:five-edges-f}]
        Suppose to the contrary that this claim fails. By symmetry, we may assume that $a_i v_j \in G$. Then $G[W]$ would contain a copy of $H$ and four pairwise disjoint edges, which together cover $14$ vertices (see Figure~\ref{Fig:five-edges-rotation-6}). By~\eqref{equ:w-5-tuple-choices-a}, this means that $\{H_i, H_j\}$ is $5$-extendable, which is a contradiction. 
    \end{proof} 

    Note from Claim~\ref{CLAIM:five-edges-d},~\ref{CLAIM:five-edges-e} and~\ref{CLAIM:five-edges-f}, there are at least $15 > 14 = 36 - e(H_i, H_j)$ pairs in $V(H_i) \times V(H_j)$ that do not belong to $G[H_i, H_j]$, which is a contradiction.

    \medskip 

    \textbf{Case 3}: $\{x, y\} = \{v_j, a_j\}$ or $\{x, y\} = \{v_j, b_j\}$.

    By symmetry, we may assume that  $\{x, y\} = \{v_j, a_j\}$. 
    
    \begin{claim}\label{CLAIM:five-edges-g}
        The following statements hold. 
        \begin{enumerate}[label=(\roman*)]
            \item\label{CLAIM:five-edges-g-1} No pair in $\{u_i, c_i, d_i\} \times \{c_j, d_j\}$ is an edge in $G$. 
            \item\label{CLAIM:five-edges-g-2} $\{u_i u_j, c_i u_j, d_i u_j\} \cap G = \emptyset$. 
            \item\label{CLAIM:five-edges-g-3} $\{a_i v_j, b_i v_j\} \cap G = \emptyset$. 
            \item\label{CLAIM:five-edges-g-4} $v_i v_j \not\in G$. 
        \end{enumerate} 
    \end{claim}
    \begin{proof}[Proof of Claim~\ref{CLAIM:five-edges-g}]
\begin{figure}[H]
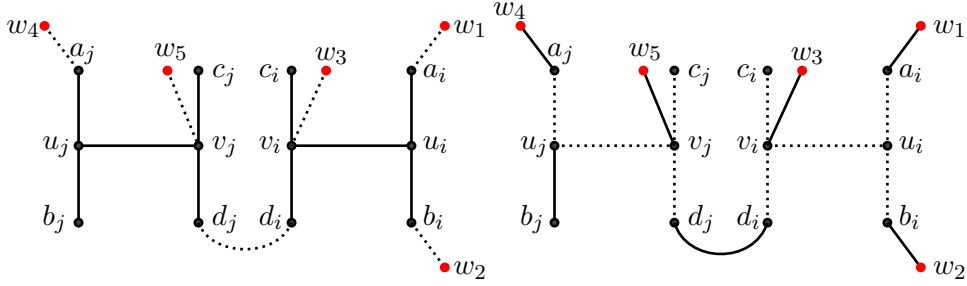

\centering
\tikzset{every picture/.style={line width=1pt}} 

\caption{Auxiliary figure for the proof of Claim~\ref{CLAIM:five-edges-g}~\ref{CLAIM:five-edges-g-1}.} 
\label{Fig:five-edges-rotation-7}
\end{figure}
        
        Let us first prove~\ref{CLAIM:five-edges-g-1}. Observe that $G[W]$ already contains five pairwise disjoint edges 
        \begin{align*}
            a_iw_1, b_iw_2, v_iw_3, a_jw_4, v_jw_5, u_j b_j. 
        \end{align*}
        Since every pair in $\{u_i, c_i, d_i\} \times \{c_j, d_j\}$ is vertex-disjoint from edges in~\eqref{Equ:five-edge-rotation}, if~\ref{CLAIM:five-edges-g-1} fails, then $G[W]$ would contain seven pairwise disjoint edges, which together cover $14$ vertices (see Figure~\ref{Fig:five-edges-rotation-7}). By~\eqref{equ:w-5-tuple-choices-a}, this means that $\{H_i, H_j\}$ is $5$-extendable, which is a contradiction. 

\begin{figure}[H]
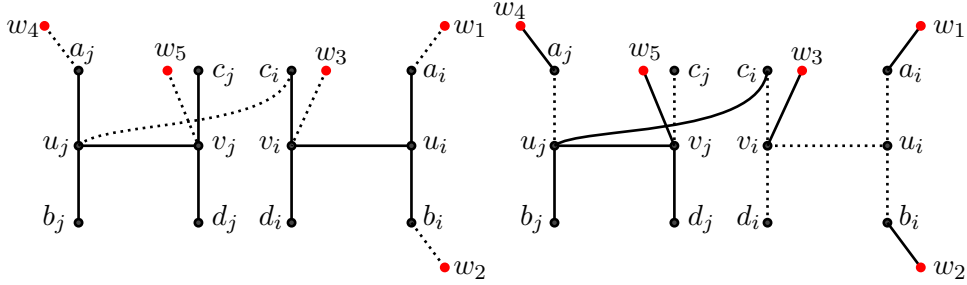

\centering
\tikzset{every picture/.style={line width=1pt}} 

\caption{Auxiliary figure for the proof of Claim~\ref{CLAIM:five-edges-g}~\ref{CLAIM:five-edges-g-2}.} 
\label{Fig:five-edges-rotation-8}
\end{figure}

        Next we prove~\ref{CLAIM:five-edges-g-2}. Suppose to the contrary that~\ref{CLAIM:five-edges-g-2} fails. By symmetry, we may assume that $c_i u_j \in G$. Then $G[W]$ would contain a copy of $H$ and four pairwise disjoint edges, which together cover $14$ vertices (see Figure~\ref{Fig:five-edges-rotation-8}). By~\eqref{equ:w-5-tuple-choices-a}, this means that $\{H_i, H_j\}$ is $5$-extendable, which is a contradiction.

\begin{figure}[H]
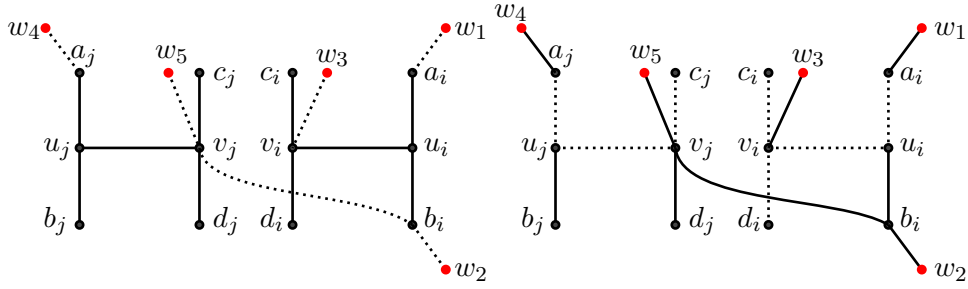

\centering
\tikzset{every picture/.style={line width=1pt}} 

\caption{Auxiliary figure for the proof of Claim~\ref{CLAIM:five-edges-g}~\ref{CLAIM:five-edges-g-3}.} 
\label{Fig:five-edges-rotation-9}
\end{figure}

        Next, we prove~\ref{CLAIM:five-edges-g-3}. Suppose to the contrary that~\ref{CLAIM:five-edges-g-3} fails. By symmetry, we may assume that $b_i v_j \in G$. Then $G[W]$ would contain a copy of $H$ and four pairwise disjoint edges, which together cover $14$ vertices (see Figure~\ref{Fig:five-edges-rotation-9}). By~\eqref{equ:w-5-tuple-choices-a}, this means that $\{H_i, H_j\}$ is $5$-extendable, which is a contradiction.

\begin{figure}[H]
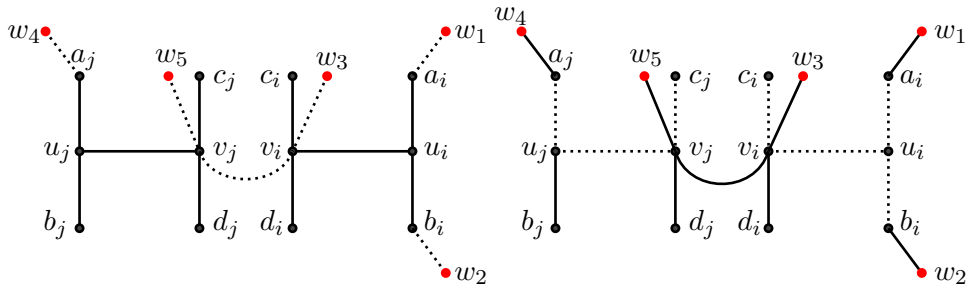

\centering
\tikzset{every picture/.style={line width=1pt}} 

\caption{Auxiliary figure for the proof of Claim~\ref{CLAIM:five-edges-g}~\ref{CLAIM:five-edges-g-4}.} 
\label{Fig:five-edges-rotation-10}
\end{figure}

        Next, we prove~\ref{CLAIM:five-edges-g-4}. Suppose to the contrary that~\ref{CLAIM:five-edges-g-4} fails. By symmetry, we may assume that $v_i v_j \in G$. Then $G[W]$ would contain a copy of $H$ and four pairwise disjoint edges, which together cover $14$ vertices (see Figure~\ref{Fig:five-edges-rotation-10}). By~\eqref{equ:w-5-tuple-choices-a}, this means that $\{H_i, H_j\}$ is $5$-extendable, which is a contradiction.
    \end{proof} 

    \begin{claim}\label{CLAIM:five-edges-h}
        We have $\{v_i a_j, v_i b_j, v_i u_j\} \cap G = \emptyset$. 
    \end{claim}
    \begin{proof}[Proof of Claim~\ref{CLAIM:five-edges-h}]
        Note that from Claim~\ref{CLAIM:five-edges-g}, there are already $12$ pairs in $V(H_i) \times V(H_j)$ that do not belong to $G[H_i, H_j]$. 
        
\begin{figure}[H]
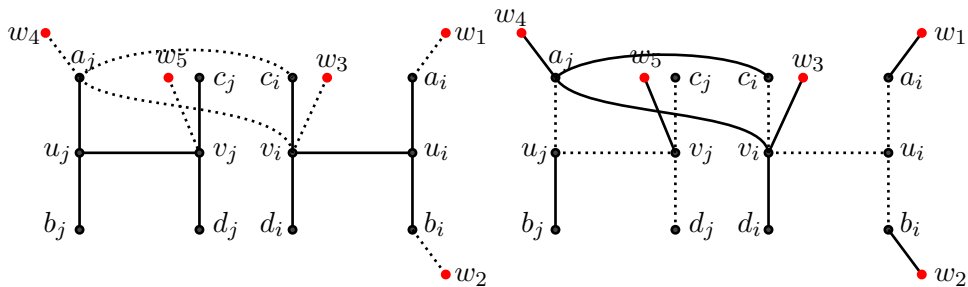

\centering
\tikzset{every picture/.style={line width=1pt}} 

\caption{Auxiliary figure for the proof of Claim~\ref{CLAIM:five-edges-h}.} 
\label{Fig:five-edges-rotation-11}
\end{figure}
        
        Let us first prove that $v_i a_j \not\in G$. Suppose to the contrary that $v_i a_j \in G$. We claim that $\{u_i a_j, c_i a_j, d_i a_j\} \cap G = \emptyset$. This will imply that there are at least $15 > 14 = 36 - e(H_i, H_j)$ pairs in $V(H_i) \times V(H_j)$ that do not belong to $G[H_i, H_j]$, which is a contradiction. 
         Suppose to the contrary that $\{u_i a_j, c_i a_j, d_i a_j\} \cap G \neq \emptyset$. By symmetry, we may assume that $c_i a_j \in G$. 
        Then $G[W]$ would contain a copy of $H$ and four pairwise disjoint edges, which together cover $14$ vertices (see Figure~\ref{Fig:five-edges-rotation-11}). By~\eqref{equ:w-5-tuple-choices-a}, this means that $\{H_i, H_j\}$ is $5$-extendable, which is a contradiction.

\begin{figure}[H]
\centering
\tikzset{every picture/.style={line width=1pt}} 

\caption{Auxiliary figure for the proof of Claim~\ref{CLAIM:five-edges-h}.} 
\label{Fig:five-edges-rotation-12}
\end{figure}

        Next we prove $v_i b_j \not\in G$. Suppose to the contrary that $v_i b_j \in G$. Then we can show that $u_i b_j, c_i b_j, d_i b_j \not\in G$, which will imply that there are already $15 > 14 = 36 - e(H_i, H_j)$ pairs in $V(H_i) \times V(H_j)$ that do not belong to $G[H_i, H_j]$, a contradiction. 

        It left to show $u_i b_j, c_i b_j, d_i b_j \not\in G$. By symmetry, suppose to the contrary that $d_i b_j \in G$. 
        Then $G[W]$ would contain a copy of $H$ and four pairwise disjoint edges, which together cover $14$ vertices (see Figure~\ref{Fig:five-edges-rotation-12}). By~\eqref{equ:w-5-tuple-choices-a}, this means that $\{H_i, H_j\}$ is $5$-extendable, which is a contradiction.

\begin{figure}[H]
\centering
\tikzset{every picture/.style={line width=1pt}} 

\caption{Auxiliary figure for the proof of Claim~\ref{CLAIM:five-edges-h}.} 
\label{Fig:five-edges-rotation-13}
\end{figure}

        Finally we prove $v_i u_j \not\in G$. Suppose to the contrary that $v_i u_j \in G$. We claim that $\{u_i u_j, c_i u_j, d_i u_j\} \cap G = \emptyset$. This will imply that there are at least $15 > 14 = 36 - e(H_i, H_j)$ pairs in $V(H_i) \times V(H_j)$ that do not belong to $G[H_i, H_j]$, which is a contradiction. 
        Suppose to the contrary that $\{u_i u_j, c_i u_j, d_i u_j\} \cap G \neq \emptyset$. By symmetry, we may assume that $d_i u_j \in G$. 
        Then $G[W]$ would contain a copy of $H$ and four pairwise disjoint edges, which together cover $14$ vertices (see Figure~\ref{Fig:five-edges-rotation-13}). By~\eqref{equ:w-5-tuple-choices-a}, this means that $\{H_i, H_j\}$ is $5$-extendable, which is a contradiction.
    \end{proof} 

    Note from Claim~\ref{CLAIM:five-edges-g} and~\ref{CLAIM:five-edges-h}, there are at least $15 > 14 = 36 - e(H_i, H_j)$ pairs in $V(H_i) \times V(H_j)$ that belong to $G[H_i, H_j]$, which is a contradiction.
    This completes the proof of Lemma~\ref{LEMMA:H2-H3-upper-bound}. 
\end{proof}

\subsection{Upper bound for $e(\mathcal{H}_3)$}\label{SUBSEC:H3-upper-bound}
\begin{lemma}\label{LEMMA:H3-upper-bound}
    Suppose that $\{H_i, H_j\} \subseteq \mathcal{H}'$ is a pair such that $|V(H_i) \cap L|  = |V(H_j) \cap L| = 3$. 
    Then $e(H_i, H_j) \le 18$. 
    In particular,
    \begin{align*}
        e(\mathcal{H}_3) 
        \le 18 \binom{y_3 n}{2} + \binom{6}{2}y_3 n
        \le 9 y_3^2 n^2 + 6 y_3 n.   
    \end{align*}
\end{lemma}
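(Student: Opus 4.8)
The plan is to follow the same template as Lemma~\ref{LEMMA:H2-H3-upper-bound}, exploiting that the situation here is more rigid --- and therefore the argument much shorter --- since \emph{both} copies have their three $L$-vertices located at $\{v_i,a_i,b_i\}$ and $\{v_j,a_j,b_j\}$ by Lemma~\ref{LEMMA:H-R-at-most-3-edges} together with the relabeling~\eqref{equ:assum-ci-di-not-in-L}. Suppose for contradiction that $e(H_i,H_j)\ge 19$. Since each of $v_i,a_i,b_i,v_j,a_j,b_j$ lies in $L$, I would pick six \emph{pairwise distinct} vertices $w_1\in N_G(v_i,\overline{U})$, $w_2\in N_G(a_i,\overline{U})$, $w_3\in N_G(b_i,\overline{U})$, $w_4\in N_G(v_j,\overline{U})$, $w_5\in N_G(a_j,\overline{U})$, $w_6\in N_G(b_j,\overline{U})$; a count as in~\eqref{equ:w-5-tuple-choices-a} shows the number of such unordered $6$-sets is at least $19\delta n^6$. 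Put $W\coloneqq V(H_i\cup H_j)\cup\{w_1,\dots,w_6\}$ (so $|W|=18$), and observe that $G[W]$ always contains the six pairwise disjoint ``backbone'' edges $v_iw_1,a_iw_2,b_iw_3,v_jw_4,a_jw_5,b_jw_6$, which cover all of $W$ except $\{u_i,c_i,d_i,u_j,c_j,d_j\}$. It then suffices to locate $18$ pairs in $V(H_i)\times V(H_j)$ that, if present in $G$, would force a $\{K_2,H,\hat{H}\}$-tiling of $G[W]$ covering at least $13$ vertices of $W$; each such pair must be a non-edge (no pair of $\mathcal{H}'$ is $6$-extendable), giving $e(H_i,H_j)\le 36-18=18$, the desired contradiction.

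The first family of $9$ forbidden pairs is $\{u_i,c_i,d_i\}\times\{u_j,c_j,d_j\}$: any edge $pq$ with $p\in\{u_i,c_i,d_i\}$ and $q\in\{u_j,c_j,d_j\}$ is vertex-disjoint from the backbone, so the backbone together with $pq$ is a matching of size $7$ in $G[W]$ covering $14\ge 13$ vertices, making $\{H_i,H_j\}$ $6$-extendable. The second family of $9$ forbidden pairs is $\{v_i,a_i,b_i\}\times\{v_j,a_j,b_j\}$; here one replaces one or two backbone edges by a copy of $H$ anchored at the relevant edge. There are three representative sub-cases, the rest following from the automorphisms $a_i\leftrightarrow b_i$, $a_j\leftrightarrow b_j$, and $i\leftrightarrow j$: if $v_iv_j\in G$ then $\{v_iv_j,v_ic_i,v_id_i,v_jc_j,v_jd_j\}$ is a copy of $H$ that, together with $a_iw_2,b_iw_3,a_jw_5,b_jw_6$, covers $14$ vertices of $W$; if $v_ia_j\in G$ then $\{v_ia_j,v_ic_i,v_id_i,a_ju_j,a_jw_5\}$ is a copy of $H$ that, together with $a_iw_2,b_iw_3,v_jw_4,b_jw_6$, covers $14$ vertices of $W$; and if $a_ia_j\in G$ then $\{a_ia_j,a_iu_i,a_iw_2,a_ju_j,a_jw_5\}$ is a copy of $H$ that, together with $v_iw_1,b_iw_3,v_jw_4,b_jw_6$, covers $14$ vertices of $W$. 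In every case this is a $\{K_2,H\}$-tiling of $G[W]$ covering $14\ge 13$ vertices, so $\{H_i,H_j\}$ is $6$-extendable, a contradiction.

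Combining the two families gives at least $18$ non-edges among the $36$ pairs of $V(H_i)\times V(H_j)$, hence $e(H_i,H_j)\le 18$. For the ``in particular'' part, $G[V_3]$ has at most $\binom{6}{2}$ edges inside each of the $y_3n$ members of $\mathcal{H}_3$ and at most $18$ edges between each of the $\binom{y_3n}{2}$ pairs of distinct members, so $e(\mathcal{H}_3)\le 18\binom{y_3n}{2}+\binom{6}{2}y_3n=9y_3^2n^2+6y_3n$. The only delicate step is verifying, in the second family, that each displayed subgraph really is a copy of $H$ and that its vertex set is disjoint from the chosen backbone edges; this amounts to checking that $c_i,d_i,c_j,d_j$ (the only vertices that can go uncovered) are never asked to play two roles at once, which is routine. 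Beyond this bookkeeping, no idea is needed past the ``reserve a matching into $\overline{U}$'' device already used throughout Section~\ref{SEC:proof-LEMMA-Hi-Hj-U-bar-edges}.
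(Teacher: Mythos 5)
Your proposal is correct and follows essentially the same approach as the paper's proof: you identify exactly the same $18$ forbidden pairs (the $9$ in $\{u_i,c_i,d_i\}\times\{u_j,c_j,d_j\}$ via the pure matching argument, and the $9$ in $\{v_i,a_i,b_i\}\times\{v_j,a_j,b_j\}$ via an $H$-plus-matching covering $14$ vertices, in each case contradicting the non-$6$-extendability of $\{H_i,H_j\}$). The paper splits the second family across three separate claims while you treat it as a single family with three representative sub-cases closed up under symmetry, but the underlying configurations and the deduction $e(H_i,H_j)\le 36-18=18$, together with the final counting for $e(\mathcal{H}_3)$, are the same.
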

\begin{proof}[Proof of Lemma~\ref{LEMMA:H3-upper-bound}]
    Fix a pair $\{H_i, H_j\} \subseteq \mathcal{H}'$  such that $|V(H_i) \cap L|  = |V(H_j) \cap L| = 3$. 
    Suppose that $|V(H_i \cup H_j) \cap L| \ge 6$. 
    By Lemma~\ref{LEMMA:H-R-at-most-3-edges} and Assumption~\eqref{equ:assum-ci-di-not-in-L}, we have $\{v_i, a_i, b_i\} = V(H_i) \cap L$ and $\{v_j, a_j, b_j\} = V(H_j) \cap L$. 
    Fix an arbitrary $6$-tuple of distinct vertices 
    \begin{align*}
        (w_1, \ldots, w_6) \in
        N_{G}(v_i, \overline{U}) \times N_{G}(a_i, \overline{U}) \times N_{G}(b_i, \overline{U}) \times N_{G}(v_j, \overline{U}) \times N_{G}(a_j, \overline{U}) \times N_{G}(b_j, \overline{U}). 
    \end{align*}
    Let $W\coloneqq V(H_i \cup H_j) \cup \{w_1, \ldots, w_6\}$. 
    Note that the number of choices for such (unordered) $6$-sets $\{w_1, \ldots, w_6\}$ is at least 
    \begin{align}\label{equ:choices-w-tuple}
        \frac{1}{6!} \cdot 19 \delta^{1/6} n \cdot (19 \delta^{1/6} n - 1) \cdots (19 \delta^{1/6} n - 5)
        \ge 19 \delta n^6. 
    \end{align}

\begin{figure}[H]
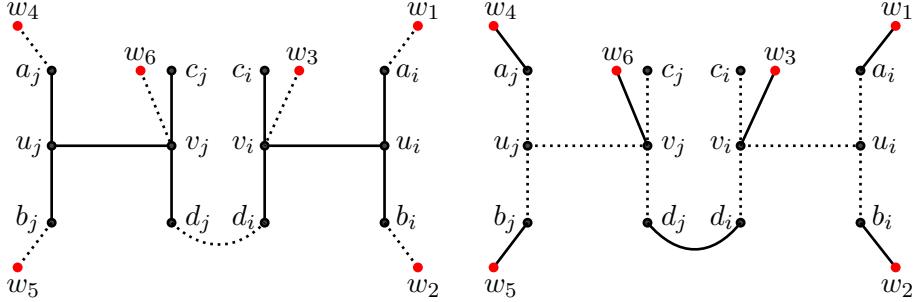

\centering
\tikzset{every picture/.style={line width=1pt}} 

\caption{Auxiliary figure for the proof of Claim~\ref{CLAIM:six-edges-a}.} 
\label{Fig:six-edge-rotation-1}
\end{figure}
    
    \begin{claim}\label{CLAIM:six-edges-a}
        No pair in $\{u_i, c_i, d_i\} \times \{u_j, c_j, d_j\}$ is an edge in $G$. 
    \end{claim}
    \begin{proof}[Proof of Claim~\ref{CLAIM:six-edges-a}]
        Observe that $G[W]$ already contains six pairwise disjoint edges: 
        \begin{align*}
            a_iw_1, b_iw_2, v_iw_3, a_jw_4, b_jw_5, v_jw_6. 
        \end{align*}
        Since every pair in $\{u_i, c_i, d_i\} \times \{u_j, c_j, d_j\}$ is vertex-disjoint from them, if this claim fails, then $G[W]$ would contain seven pairwise disjoint edges (see Figure~\ref{Fig:six-edge-rotation-1}). 
        By~\eqref{equ:choices-w-tuple}, this means that $\{H_i,H_j\}$ is $6$-extendable, which is a contradiction. 
    \end{proof}

\begin{figure}[H]
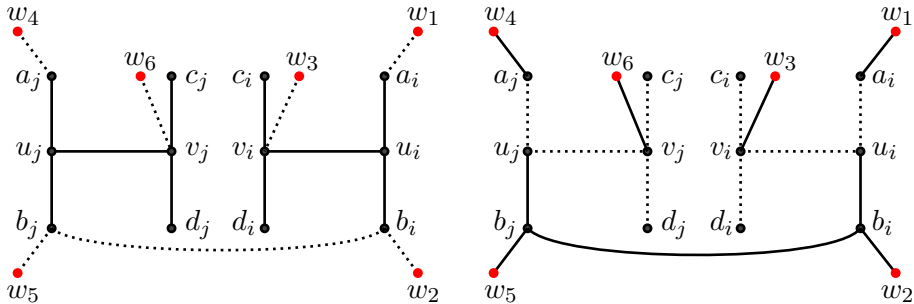

\centering
\tikzset{every picture/.style={line width=1pt}} 

\caption{Auxiliary figure for the proof of Claim~\ref{CLAIM:six-edges-b}.} 
\label{Fig:six-edge-rotation-2}
\end{figure}

    \begin{claim}\label{CLAIM:six-edges-b}
        No pair in $\{a_i, b_i\} \times \{a_j, b_j\}$ is an edge in $G$. 
    \end{claim}
    \begin{proof}[Proof of Claim~\ref{CLAIM:six-edges-b}]
        Suppose to the contrary that this claim fails. By symmetry, we may assume that $b_ib_j \in G$. Then $G[W]$ contains a copy of $H$ and four pairwise disjoint edges, which together cover $14$ vertices (see Figure~\ref{Fig:six-edge-rotation-2}). By~\eqref{equ:choices-w-tuple}, this means that $\{H_i,H_j\}$ is $6$-extendable, which is a contradiction. 
    \end{proof}

\begin{figure}[H]
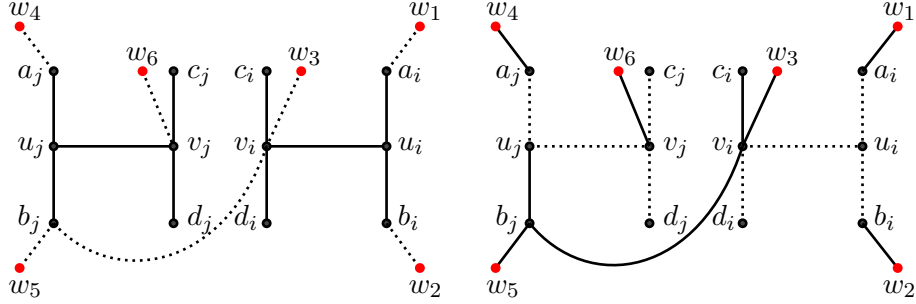

\centering
\tikzset{every picture/.style={line width=1pt}} 

\caption{Auxiliary figure for the proof of Claim~\ref{CLAIM:six-edges-c}.} 
\label{Fig:six-edge-rotation-3}
\end{figure}

    \begin{claim}\label{CLAIM:six-edges-c}
        We have $\{v_i a_j, v_i b_j\} \cap G = \emptyset$ and $\{v_j a_i, v_j b_i\} \cap G = \emptyset$.   
    \end{claim}
    \begin{proof}[Proof of Claim~\ref{CLAIM:six-edges-c}]
        Suppose to the contrary that this claim fails. By symmetry, we may assume that $v_i b_j \in G$. Then $G[W]$ contains a copy of $H$ and four pairwise disjoint edges, which together cover $14$ vertices (see Figure~\ref{Fig:six-edge-rotation-3}). By~\eqref{equ:choices-w-tuple}, this means that $\{H_i,H_j\}$ is $6$-extendable, which is a contradiction. 
    \end{proof}

\begin{figure}[H]
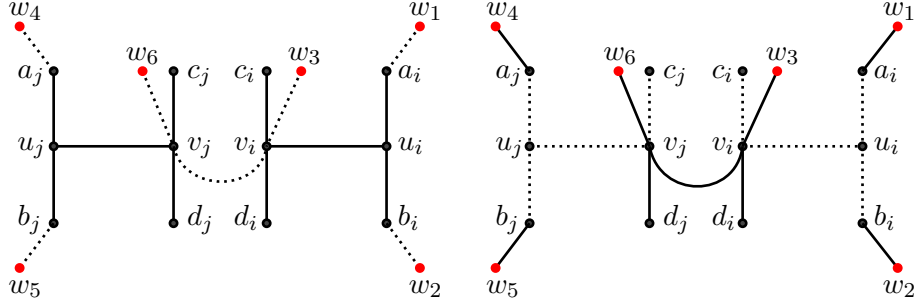

\centering
\tikzset{every picture/.style={line width=1pt}} 

\caption{Auxiliary figure for the proof of Claim~\ref{CLAIM:six-edges-d}.} 
\label{Fig:six-edge-rotation-4}
\end{figure}

    \begin{claim}\label{CLAIM:six-edges-d}
        We have $v_i v_j \not\in G$. 
    \end{claim}
    \begin{proof}[Proof of Claim~\ref{CLAIM:six-edges-d}]
        Observe that $G[W]$ already contains four pairwise disjoint edges $$a_i w_1, b_i w_2, a_j w_4, b_j w_5.$$
        If $v_i v_j \in G$, then the set $\{v_i, v_j, c_i, d_i, c_j, d_j\}$ would span a copy of $H$ in $G$ (see Figure~\ref{Fig:six-edge-rotation-4}). By~\eqref{equ:choices-w-tuple}, this means that $\{H_i,H_j\}$ is $6$-extendable, which is a contradiction.
    \end{proof}
    
    It follows from Claims~\ref{CLAIM:six-edges-a},~\ref{CLAIM:six-edges-b},~\ref{CLAIM:six-edges-c}, and~\ref{CLAIM:six-edges-d} that 
    \begin{align*}
        e(H_i, H_j)
        \le 36 - 9 - 4 - 4 - 1
        = 18, 
    \end{align*}
    proving  Lemma~\ref{LEMMA:H3-upper-bound}. 
\end{proof}

\section{Concluding remarks}\label{SEC:Remark}
A straightforward modification of the proof of Theorem~\ref{THM:Main-H-tiling} yields the following theorem, which provide infinitely many counterexamples to Conjecture~\ref{CONJ:Lang}. 

\begin{theorem}
    Let $t \ge 1$ be an integer and $\beta \in \left[0, \frac{1}{6t}\right]$ be a real number. 
    Then 
    \begin{align*}
        \mathrm{ex}(n,\beta n \cdot H[t])
        \le \left(\max\left\{3t\beta(1-3t\beta),~18t^2\beta^2\right\} + o(1)\right) n^2.  
    \end{align*}
\end{theorem}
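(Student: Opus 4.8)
The plan is to mimic the proof of Theorem~\ref{THM:Main-H-tiling} verbatim, replacing the single copy of $H$ by the $t$-blowup $H[t]$ throughout, and tracking how each constant scales. Write $m \coloneqq v(H[t]) = 6t$; the target bound asserts $\mathrm{ex}(n,\beta n\cdot H[t]) \le (\Xi_t(\beta)+o(1))n^2$ where $\Xi_t(\beta) = \max\{3t\beta(1-3t\beta), 18t^2\beta^2\}$. Observe that $\Xi_t(\beta) = \Xi(t\beta)$ in the notation of Theorem~\ref{THM:Main-H-tiling}, since $3(t\beta)(1-3(t\beta)) = 3t\beta(1-3t\beta)$ and $18(t\beta)^2 = 18t^2\beta^2$; this is the crucial arithmetic coincidence that makes the substitution work cleanly. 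So the statement is morally ``$\mathrm{ex}(n, \gamma n \cdot H[t]) \le (\Xi(t\gamma)+o(1))n^2$'', with $\gamma \in [0,1/(6t)]$ so that $t\gamma \in [0,1/6]$ lies in the range handled by Proposition~\ref{PROP:optimization-Psi}.

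The first step is the blowup-reduction analogue of Section~\ref{SEC:Proof-H-tiling}. Define $\hat{H}[t]$ (the $t$-blowup of $\hat{H}$) as the auxiliary graph, and note that Proposition~\ref{PROP:K2-H-hat-decomposition} still applies: $K_2[6t]$ and $\hat{H}[6t] = (\hat{H}[6])[\,t\,]$ both contain perfect $H[t]$-tilings, since a perfect $H$-tiling of $K_2[6]$ (resp.\ $\hat{H}[6]$) blows up to a perfect $H[t]$-tiling of $K_2[6t]$ (resp.\ $\hat{H}[6t]$). Then I would state and prove the $H[t]$-analogue of Proposition~\ref{PROP:H-K2-H-hat-tiling}: if $G$ is an $n$-vertex graph with $\nu(H[t],G) \le \beta n$ and $|G| \ge (\Xi_t(\beta)+\varepsilon)n^2$, then $G$ contains a $\{K_2, H[t], \hat{H}[t]\}$-tiling covering at least $6t\cdot\nu(H[t],G) + \delta n$ vertices. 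Granting this, the blowup-iteration argument (Corollary~\ref{CORO:blowup-H-tiling}) and the regularity-lemma finish (proof of Theorem~\ref{THM:Main-H-tiling}) go through with $6\beta$ replaced by $6t\beta$ and $H$ replaced by $H[t]$ everywhere, using the Blowup Lemma (Lemma~\ref{LEMMA:blowup}) and Slicing Lemma (Lemma~\ref{LEMMA:blowup-refine-partition}) exactly as before.

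For the core estimate (the $H[t]$-analogue of Proposition~\ref{PROP:H-K2-H-hat-tiling}, i.e.\ Sections~\ref{SEC:Proof-K2-H-hatH-tiling} and~\ref{SEC:proof-LEMMA-Hi-Hj-U-bar-edges}), I would run the same stability/rotation scheme. Take a maximum $H[t]$-tiling $\mathcal{H} = \{H_1,\dots,H_t'\}$ (each $H_i \cong H[t]$, now on $6t$ vertices), set $U = \bigcup V(H_i)$, $\overline{U} = V \setminus U$. Since $G[\overline U]$ is $H[t]$-free and $H[t]$ is a tree-like bipartite graph (not a tree, so Fact~\ref{FACT:H-free-Turan} needs replacing), one uses instead the $\mathrm{ex}(n, H[t]) = O(n^{2-1/t})$ bound (Kővári–Sós–Turán, since $H[t] \subseteq K_{3t,3t}$), which is $o(n^2)$ and hence harmless. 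Define the ``large-degree'' set $L$ and, for a pair $\{H_i, H_j\}$, extendability relative to sets $S \subseteq \overline U$, but now an extendable pair should allow replacing $6t + 6t = 12t$ covered vertices by at least $12t+1$ (equivalently $13$ scales to $12t+1$). One then needs the $H[t]$-versions of Lemmas~\ref{LEMMA:H-R-at-most-3-edges} through~\ref{LEMMA:H3-upper-bound}: the key point is that the relevant ``rotation'' moves (the decompositions of $V(H_i \cup H_j) \cup \{\text{a few } w\text{'s}\}$ into a copy of $\hat H$ plus disjoint edges, and similar) all blow up by a factor of $t$ — e.g.\ a decomposition of $V(H_i)\cup V(H_j)\cup\{w_1,w_2\}$ into $\hat H$ plus three disjoint edges becomes, in the blown-up picture, a decomposition of $V(H_i[t])\cup V(H_j[t])$ together with $2t$ external vertices into $\hat H[t]$ plus $3t$ disjoint edges; since $\hat H[t]$ has a perfect $H[t]$-tiling and $K_2[t]$ does too, this still yields a $\{K_2,H[t],\hat H[t]\}$-tiling covering $>12t$ of the available vertices. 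The edge-count bounds $e(H_i,H_j)\le 30, 24, 21, 18$ between classes $\mathcal H_\ell$ become bounds on the edge-\emph{density} of $G[V(H_i[t]), V(H_j[t])]$ — namely $\le \rho$ times $(6t)^2/36$ in the corresponding cases — because a dense bipartite graph between two blown-up $H$'s contains the blown-up version of any configuration one could find in the $t=1$ case (by a supersaturation/embedding argument, or by passing to a regular-looking sub-blowup). Feeding these into the same optimization $\Psi$ (now scaled: the variables $y_i = |\mathcal H_i|/n$ satisfy $\sum y_i \le t'/n \le \beta n \cdot$, with the quadratic form multiplied by $t^2$ from vertex-counts and the linear term by the $o(n^2)$ Turán bound being negligible) reduces the problem to $\Psi_{t\alpha}^* = \Xi(t\alpha)$, which is exactly Proposition~\ref{PROP:optimization-Psi} applied at parameter $t\alpha \in [0,1/6]$.

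The main obstacle is the case analysis in the edge-density lemmas: in the $t=1$ proof, ``$e(H_i,H_j)\ge k$'' directly forces specific edges to be present, enabling explicit rotations; in the blowup setting one only controls \emph{densities}, so one must argue that a bipartite graph of density exceeding the critical threshold between $V(H_i[t])$ and $V(H_j[t])$ contains a blown-up copy of the ``bad'' configuration. The clean way is to first pass (via the Slicing Lemma / a density-increment or a second application of regularity inside each pair) to sub-parts of size $\Theta(t)$ on which the relevant sub-bipartite graphs are $\varepsilon'$-regular of density bounded away from the threshold, then invoke the Blowup Lemma (Lemma~\ref{LEMMA:blowup}) once more to embed the required configuration. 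I expect this ``lift the rotations through the blowup'' bookkeeping — and verifying that the thresholds $30,24,21,18$ survive the lift as the right density constants so that $\Psi$ is unchanged up to the global $t^2$ scaling — to be the genuinely delicate part; everything else is a transcription of the $t=1$ argument with $6 \rightsquigarrow 6t$ and $H \rightsquigarrow H[t]$.
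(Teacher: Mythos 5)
Your observation that $\Xi_t(\beta)=\Xi(t\beta)$ is the right starting point, but you make the task far harder than it needs to be. There is no need to state and prove an $H[t]$-analogue of Proposition~\ref{PROP:H-K2-H-hat-tiling}, nor to lift the rotation lemmas of Section~\ref{SEC:proof-LEMMA-Hi-Hj-U-bar-edges} through the blowup with density/regularity bookkeeping --- you can use Corollary~\ref{CORO:blowup-H-tiling} as a black box and insert one extra blowup step. Concretely: by Lemma~\ref{LEMMA:regularity}, the $\rho$-reduced graph $R$ of $G$ on $k$ vertices satisfies $|R|\ge(\Xi(t\beta)+\varepsilon)k^{2}$ as in~\eqref{equ:R-edge-density}. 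Apply Corollary~\ref{CORO:blowup-H-tiling} with parameter $t\beta\in(0,1/6)$ in place of $\beta$ to conclude that $R[M]$ contains an $H$-tiling of size at least $t\beta\cdot Mk$, where $M=M_{\ref{CORO:blowup-H-tiling}}(t\beta,\varepsilon)$. Now take the $t$-blowup: $R[Mt]=(R[M])[t]$, and every copy of $H$ in the tiling blows up to a copy of $H[t]$, so $R[Mt]$ contains an $H[t]$-tiling of size $t\beta\cdot Mk$ covering $6t\cdot t\beta\cdot Mk=6t\beta\cdot(Mtk)$ vertices, i.e.\ a $6t\beta$-fraction of $R[Mt]$. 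Finally the Slicing Lemma (Lemma~\ref{LEMMA:blowup-refine-partition}) and the Blowup Lemma (Lemma~\ref{LEMMA:blowup}, applied with $F=H[t]$ and proportion $6t\beta$) transfer this to $G$, giving $\nu(H[t],G)\ge\beta n$. This is the ``straightforward modification'' the paper intends: only the top-level argument of Section~\ref{SEC:Proof-H-tiling} changes, and all of Sections~\ref{SEC:Proof-K2-H-hatH-tiling}--\ref{SEC:proof-LEMMA-Hi-Hj-U-bar-edges} are reused unchanged.

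The part of your plan you flagged as ``genuinely delicate'' --- replacing exact edge-count thresholds $e(H_i,H_j)\le 30,24,21,18$ by density thresholds between blown-up copies of $H$, and re-verifying all the rotations --- is exactly what the clean route makes unnecessary. Beyond being laborious, that route also has a real gap you would have to close: a bipartite graph of density strictly above $30/36$ between $V(H_i[t])$ and $V(H_j[t])$ does \emph{not} automatically contain a blown-up copy of every edge configuration present in the $t=1$ extremal analysis; extracting a regular-looking sub-blowup of positive-fraction size inside each part and then invoking a counting/embedding lemma would indeed work, but it reintroduces a full regularity argument at the level of individual tile pairs and would have to be carried through \emph{every} one of the dozens of case-specific claims in Section~\ref{SEC:proof-LEMMA-Hi-Hj-U-bar-edges}. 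The black-box route sidesteps all of this.
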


For an $r$-graph $F$ and an integer $i \in [r]$, the \textbf{$i$-covering number} $\tau_{i}(F)$ of $F$ is defined as 
\begin{align*}
    \tau_{i}(F) 
    \coloneqq \min\left\{|S| \colon S \subseteq V(F) \ \text{and}\  |S\cap e| \ge i \ \text{for all} \  e \in F\right\}.
\end{align*}
Note that $\tau_{1}(F)$ is simply the covering number of $F$, and $\tau_{r}(F) = v(F)$. 

We call an $r$-graph $F$ \textbf{rigid} if there exist integers $1 \le s_1 \le \cdots \le s_r$ satisfying $v(F) = s_1 + \cdots + s_r$ such that $F \subseteq K_{s_1, \ldots, s_r}^{r}$ and $\tau_{i}(F) = s_1 + \cdots + s_i$ for every $i \in [r-1]$.
In particular, $K_{s_1, \ldots, s_r}^{r}$ is rigid for all integers $1 \le s_1 \le \cdots \le s_r$. 
Observe that $K_{s_1, \ldots, s_r}^{r}$ in~\eqref{equ:trivial-lower-bound} can be replaced by any rigid spanning subgraph of $K_{s_1, \ldots, s_r}^{r}$. 
Thus,  we propose the following revised version of Conjecture~\ref{CONJ:Lang}. 

\begin{conjecture}\label{CONJ:Lang-modified}
    Let $r\ge 3$ and $1 \le s_1 \le \cdots \le s_r$ be integers. Let $m \coloneqq s_1 + \cdots + s_r$. Suppose that $F$ is a rigid spanning subgraph of $K_{s_1, \ldots, s_r}^{r}$. Then for every real number $\beta \in \left(0, \frac{1}{m}\right)$,
    \begin{align*}
        \mathrm{ex}(n,\beta n \cdot F) 
        = \max\left\{|G_{n,i,\beta}(s_1, \ldots, s_r)| \colon i \in [r]\right\} + o(n^r).
    \end{align*}
\end{conjecture}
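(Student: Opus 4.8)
The plan is to follow the three‑tier architecture of the proof of Theorem~\ref{THM:Main-H-tiling}: a regularity/blow‑up reduction, an augmentation statement on a dense reduced hypergraph, and a local edge‑counting optimisation. Write $m_i\coloneqq s_1+\cdots+s_i$ (so $m=m_r$), let $\pi_i(\beta)\coloneqq\lim_{n\to\infty}|G_{n,i,\beta}(s_1,\ldots,s_r)|/n^r$, and set $\Pi(\beta)\coloneqq\max_{i\in[r]}\pi_i(\beta)$. The lower bound $\mathrm{ex}(n,\beta n\cdot F)\ge\max_i|G_{n,i,\beta}(s_1,\ldots,s_r)|$ is exactly~\eqref{equ:trivial-lower-bound}, and for a \emph{rigid} $F$ it is valid because a copy of $F$ inside $G_{n,i,\beta}$ has all its edges meeting $V_1$ in at least $i$ vertices, so it occupies a subset of $V_1$ which is an $i$‑cover of $F$, hence at least $\tau_i(F)=m_i$ vertices of $V_1$; since $|V_1|=\lfloor\beta m_i n\rfloor-1$ this forces $\nu(F,G_{n,i,\beta})<\beta n$. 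So only the upper bound $\mathrm{ex}(n,\beta n\cdot F)\le(\Pi(\beta)+o(1))n^r$ requires work, and rigidity will be the hypothesis that makes it true (for non‑rigid $F$ a cheaper $i$‑cover yields denser constructions of a different shape, as the tree $H$ already shows).

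For the reduction, given an $n$‑vertex $r$‑graph $G$ with $|G|\ge(\Pi(\beta)+2\varepsilon)n^r$, I would apply a strong hypergraph regularity lemma (the $r\ge3$ analogue of Lemma~\ref{LEMMA:regularity}) to obtain a regular partition and a reduced $r$‑graph $R$ with $|R|\ge(\Pi(\beta)+\varepsilon)|V(R)|^r$. Using a hypergraph blow‑up lemma together with a hypergraph ``slicing'' lemma (analogues of Lemmas~\ref{LEMMA:blowup} and~\ref{LEMMA:blowup-refine-partition}), it suffices to prove that a large balanced blow‑up $R[N]$ contains an $F$‑tiling covering at least an $m\beta$‑fraction of its vertices; and, exactly as in Corollary~\ref{CORO:blowup-H-tiling}, this follows by iterating an augmentation lemma provided the ``base'' and ``gadget'' graphs have perfect $F$‑tilings in their blow‑ups. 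For the base graph I would take $K_r^r$: its $m$‑blow‑up is $K_{m,\ldots,m}^r$, which carries a perfect $K_{s_1,\ldots,s_r}^r$‑tiling — place the $r$ required copies so that the $p$‑th part of copy $k$ lies in the $(p+k\bmod r)$‑th part, a cyclic Latin square, which balances all part sizes — and hence a perfect $F$‑tiling since $F$ is a spanning subgraph of $K_{s_1,\ldots,s_r}^r$.

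The core is the analogue of Proposition~\ref{PROP:H-K2-H-hat-tiling}: for a dense $R$ with $\nu(F,R)\le\beta v(R)$, produce a $\{K_r^r\}\cup\mathcal{F}^\ast$‑tiling covering more than $m\cdot\nu(F,R)+\delta v(R)$ vertices, where $\mathcal{F}^\ast$ is a finite list of ``over‑full'' gadgets, each admitting a perfect $F$‑tiling in its blow‑up. I would fix a maximum $F$‑tiling $\mathcal H$, let $\overline U$ be the uncovered vertices, declare a vertex of a copy \emph{large} if it has many neighbours in $\overline U$, and — as in the extendable‑pair argument of Section~\ref{SEC:Proof-K2-H-hatH-tiling} — show that no pair of copies is extendable. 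Rigidity enters here: in any non‑extendable copy $H_k$ the set $L_k$ of large vertices must, up to the constantly‑many exceptions, be contained in a minimum $j$‑cover of $F$ for some $j=j(H_k)\in[r-1]$, since otherwise the outside neighbours of $L_k$ let one repartition $V(H_k)\cup V(H_\ell)$, together with a bounded number of $\overline U$‑vertices, into more copies of $F$ via the gadgets. Partitioning $\mathcal H$ by the type $j(H_k)$, bounding $e(\mathcal H_j,\mathcal H_{j'})$ for each pair of types and the number of edges into $\overline U$ (the latter at most $m_j$ times the ``large'' threshold per copy, as in Lemma~\ref{LEMMA:Hi-Hj-U-bar-edges}), and maximising the resulting degree‑$r$ polynomial over the simplex of type‑proportions should yield exactly $\Pi(\beta)$ — the analogue of Proposition~\ref{PROP:optimization-Psi}, which I expect can again be checked symbolically.

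The main obstacle is the local edge‑counting behind the augmentation lemma. Even for the single six‑vertex tree this occupied the entire, intricately‑cased Section~\ref{SEC:proof-LEMMA-Hi-Hj-U-bar-edges}, and for a general rigid $F\subseteq K_{s_1,\ldots,s_r}^r$ the number of link configurations grows rapidly with $r$ and with the $s_j$. What is really needed is a \emph{uniform} replacement for the ad hoc rotation arguments: a lemma asserting that whenever the bipartite link between two copies $H_k,H_\ell$ is denser than the ``rigid prediction'', the set $V(H_k)\cup V(H_\ell)$ augmented by a bounded number of $\overline U$‑vertices hanging off the large vertices can always be repartitioned into copies of $K_r^r[\cdot]$, of $F$, and of members of $\mathcal{F}^\ast$ — one more than $\mathcal H$ provides. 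Identifying the correct finite family $\mathcal{F}^\ast$ and proving this merging statement once and for all, rather than for a fixed $F$, is precisely where the difficulty lies, and is why Conjecture~\ref{CONJ:Lang-modified} remains open. A secondary but genuine hurdle is that the regularity/blow‑up toolkit for $r\ge3$ (strong hypergraph regularity, a hypergraph blow‑up lemma, the matching slicing lemma, and the iteration scheme of Corollary~\ref{CORO:blowup-H-tiling}) is considerably heavier than the graph versions used here and would have to be assembled in exactly the form required.
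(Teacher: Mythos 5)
The statement you were asked to prove is Conjecture~\ref{CONJ:Lang-modified}, which the paper states as an \emph{open conjecture} in its concluding remarks and does not prove; there is no argument in the paper to compare against.

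Your submission correctly recognises this: you present a research programme, not a proof, and you explicitly conclude that the conjecture remains open. Within that framing the outline is reasonable and consistent with the architecture of the paper's proof of Theorem~\ref{THM:Main-H-tiling}. In particular, your verification of the lower bound via rigidity is correct and is exactly why the paper restricts the revised conjecture to rigid spanning subgraphs: a copy of $F$ embedded in $G_{n,i,\beta}(s_1,\ldots,s_r)$ meets $V_1$ in a set that is an $i$-cover of that copy, hence in at least $\tau_i(F)=s_1+\cdots+s_i$ vertices, which forces $\nu(F,G_{n,i,\beta})<\beta n$; if instead some $\tau_i(F)<s_1+\cdots+s_i$, the corresponding $G_{n,i,\beta}$ is not $\beta n\cdot F$-free and the inequality~\eqref{equ:trivial-lower-bound} breaks for that $i$ (this is precisely the mechanism the paper exploits with $H$, whose $\tau_1(H)=2<3$). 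Your cyclic placement of $r$ copies of $K_{s_1,\ldots,s_r}^r$ to give a perfect tiling of $K_r^r[m]$ is also correct. The two obstacles you identify --- a uniform replacement for the rotation case analysis of Section~\ref{SEC:proof-LEMMA-Hi-Hj-U-bar-edges} together with the correct finite gadget family $\mathcal{F}^\ast$, and the assembly of hypergraph regularity, blow-up, slicing and iteration tools for $r\ge 3$ --- are genuine, and are precisely why the paper poses this as a conjecture rather than a theorem. Since you do not claim a completed argument, there is no gap to report beyond the ones you already name.
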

\textbf{Remark.}
Note that the case $r=2$ follows from Theorem~\ref{THM:GH12}.  

The asymptotic behavior of $\mathrm{ex}(n,\beta n \cdot F)$ remains unclear to us when $F$ is nonrigid, as there are numerous parameters of $F$ that could be leveraged to construct lower bounds for $\mathrm{ex}(n,\beta n \cdot F)$ (see e.g. the concluding remarks in~\cite{HHLLYZ23a,GLMP24} for more details). 
Thus we propose the following question concerning nonrigid $r$-partite $r$-graphs, which appears to be plausible. 
\begin{problem}\label{PROB:non-rigid}
    Let $r\ge 2$ and $1 \le s_1 \le \cdots \le s_r$ be integers. Let $m \coloneqq s_1 + \cdots + s_r$. Suppose that $F$ is a nonrigid spanning subgraph of $K_{s_1, \ldots, s_r}^{r}$.
    Is it true that there exist $\beta \in \left(0, \frac{1}{m}\right)$ and $\varepsilon > 0$ such that the following holds for all sufficiently large $n$:
    \begin{align*}
        \mathrm{ex}(n,\beta n \cdot F) 
        \le \max\left\{|G_{n,i,\beta}(s_1, \ldots, s_r)| \colon i \in [r]\right\} - \varepsilon n^{r}.
    \end{align*}
\end{problem}

As a first step toward understanding the density tiling problem in general, gaining a complete understanding of it for trees would be helpful. 
\begin{problem}\label{PROB:tiling-tree}
    Let $T$ be a tree. 
    Determine the value of $\mathrm{ex}(n,\beta n \cdot T)$ for large $n$ and all $\beta \in (0, 1/v(T))$. 
\end{problem}
\section*{Acknowledgment}
We would like to thank Donglei Yang for valuable discussions.
\bibliographystyle{alpha}
\bibliography{Htiling-new}
\end{document}